\documentclass{amsart}
\usepackage{amssymb}
\usepackage{hyperref}
\usepackage{mathrsfs}
\usepackage{mathtools}
\usepackage{centernot}
\usepackage{bbm}
\usepackage{dirtytalk}

\usepackage[utf8]{inputenc}
\newcommand{\Z}{\mathbb{Z}}	
\newcommand{\R}{\mathbb{R}}	
\newcommand{\N}{\mathbb{N}}

\DeclarePairedDelimiter{\ceil}{\lceil}{\rceil}

\newcommand{\paren}[1]{\left( #1 \right)}
\newcommand{\brac}[1]{\left[ #1 \right]}

\newcommand{\abs}[1]{\left\vert#1\right\vert}

\newcommand{\set}[1]{\left\{#1\right\}}

\newcommand{\interior}[1]{%
  {\kern0pt#1}^{\mathrm{o}}%
}

\DeclareMathOperator{\rank}{rank}
\DeclareMathOperator{\im}{im}

\newtheorem{theorem}{Theorem}
\newtheorem{lemma}[theorem]{Lemma}
\newtheorem{prop}[theorem]{Proposition}

\newtheorem{corollary}[theorem]{Corollary}

\usepackage{tikz}
\usepackage{tikz-cd}

\title{Disks, Surfaces, and Entanglement Percolation}
\author{Paul Duncan}
\author{Benjamin Schweinhart}
\author{David Sivakoff}
\date{\today}

\begin{document}

\begin{abstract}
We study the probability that a loop is null-homotopic --- that is, bounded by the continuous image of a disk --- in plaquette percolation on $\Z^3.$ Locally, the event that there is a ``horizontal disk crossing'' of a rectangular prism is dual to the event that there is a vertical crossing in entanglement percolation (with wired boundary conditions). However, the analysis of analogous events on the full lattice is complicated by the long-range nature of entanglement percolation. We show that the probability that a rectangular loop is contractible exhibits a phase transition from area law to perimeter law dual to the entanglement percolation threshold, conditional on a conjecture concerning the continuity of entanglement percolation thresholds with respect to truncation. We also show the continuity of a truncated entanglement percolation threshold in slabs and apply that to identify a regime where large plaquette surfaces exist but typically have many handles.
\end{abstract}
\maketitle

\section{Introduction}

The study of random graphs has been an active and fruitful area of research for more than 50 years, though much less is known about analogous questions in higher dimensions. Random simplicial and cubical complexes exhibit interesting topological phenomena that are not present in graphs. In the mean-field setting, distinct phase transitions have been exhibited related to the core topological notions of homology~\cite{linial2006homological,meshulam2009homological,linial2016phase} and homotopy~\cite{babson2011fundamental}. For another mean-field model the thresholds for the vanishing of the one-dimensional homology and the fundamental group coincide~\cite{kahle2021topology}. However, the topology of a percolation complex on the lattice is less well understood. 

We are interested in the appearance of giant surfaces in higher dimensional lattice models, in analogy with the giant component from classical percolation. However, different classes of surfaces are expected to appear at different thresholds. In this article we study the resulting topological regimes from two different perspectives. A seminal result of Aizenman, Chayes, Chayes, Fr\"{o}lich, and Russo~\cite{ACCFR83} showed that there is a phase transition in the probability that a large loop is the boundary of a set of plaquettes, a homological notion of ``filling'' the loop. The same authors conjectured that there should be a distinct threshold for the related homotopic event that a large loop is contractible, that is, the event that the loop can be continuously deformed to a point within the plaquette system. As we will see, contractibility turns out to be equivalent to filling the loop with a disk in a thickening of the plaquettes. We partially confirm this conjecture and then investigate the intermediate regime between homological filling and contractibility, showing that the large surfaces that appear have complicated topological structure.

The main tool that we use is entanglement percolation, which we describe in greater detail below. Entanglement is known to percolate at a different threshold than connectivity~\cite{AG91,holroyd2000existence,holroyd2002inequalities}, and is related to the presence of spheres in the dual complex~\cite{grimmett2010plaquettes,grimmett2000entanglement}. However, entanglement percolation is poorly understood compared to classical percolation due to the nonlocality of deciding whether a set of bonds is entangled. We obtain our results by studying an intermediate local form of entanglement.

\begin{figure}
	\includegraphics[height=0.2\textwidth]{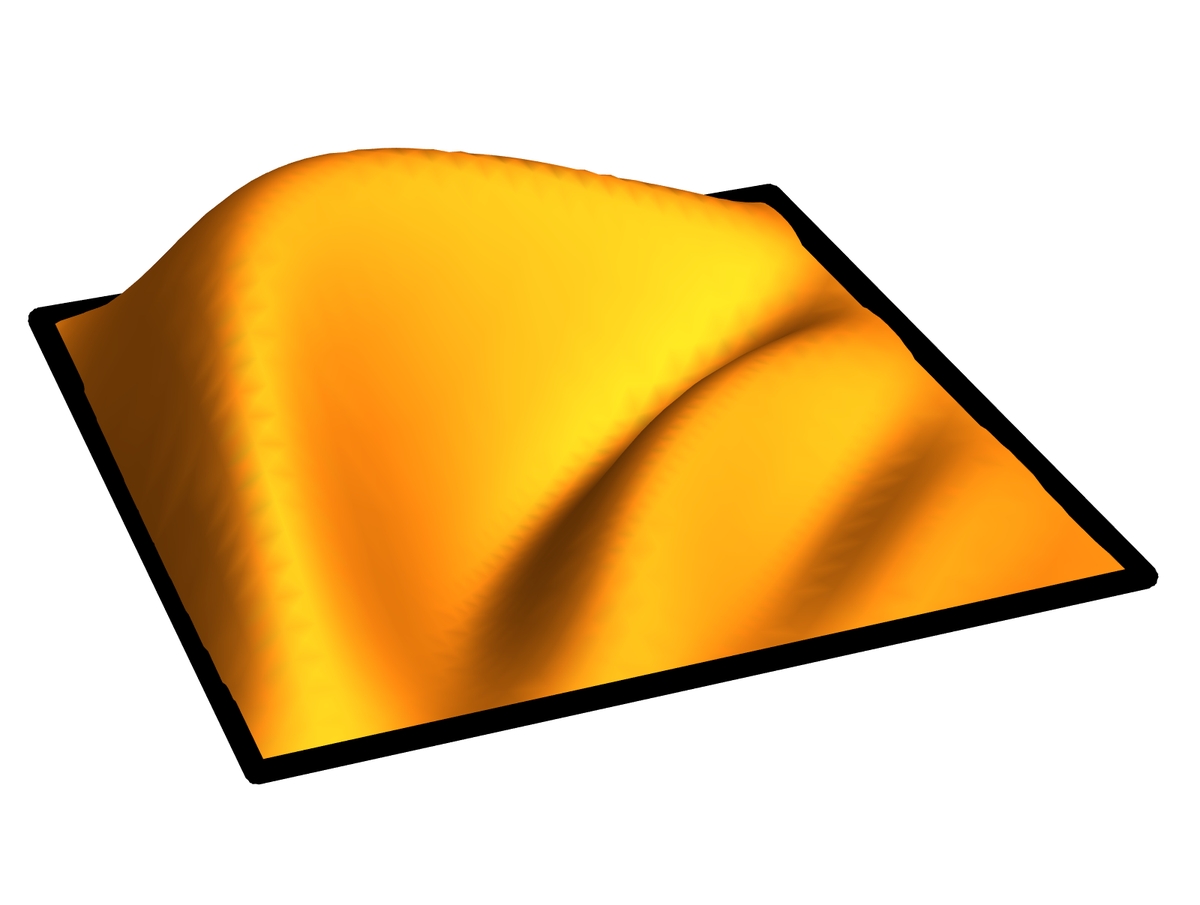}\qquad
	\includegraphics[height=0.2\textwidth]{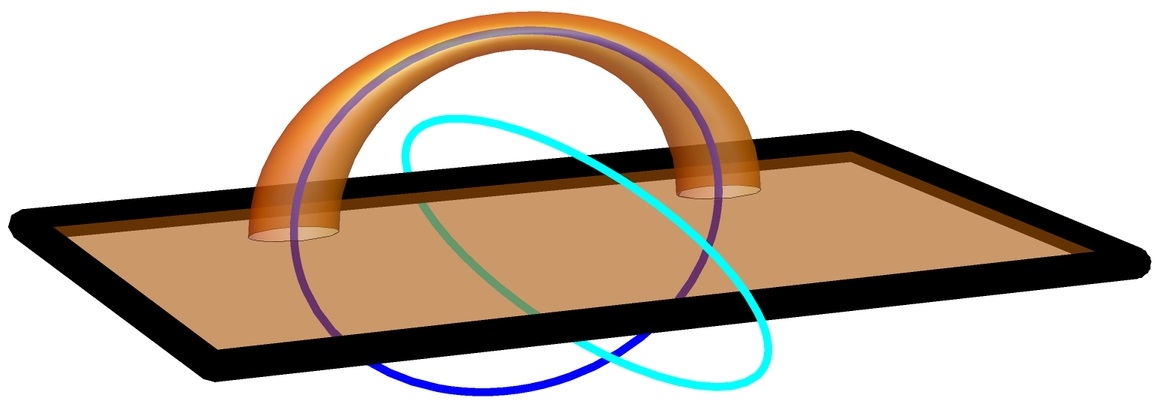}
    \caption{Plaquette events: (left) the loop $\gamma$ (shown in black) is null-homotopic as it is bounded by an orange surface of plaquettes homeomorphic to a disk. (right) The loop $\gamma$ is null-homologous (bounded by a surface) but not null-homotopic. In fact, a null-homotopy is precluded by the existence of two loops of dual bonds (shown in blue) which, together with $\gamma$, form a set of Borromean rings.}
\label{fig:giantcycles}
\end{figure}

\section{Main Results}

Our model of interest is plaquette percolation on the cubical nearest-neighbor lattice with vertices in $\Z^3.$ A (2-dimensional) plaquette is an integer translate of one of the unit squares $\brac{0,1} \times \brac{0,1} \times \set{0},$ $\brac{0,1} \times \set{0} \times \brac{0,1},$ or $\set{0} \times \brac{0,1} \times \brac{0,1}.$ We can also think of the plaquettes as the $2$-dimensional cells of the cubical complex formed by writing $\R^3$ as a union of unit cubes with integer corners. The plaquette system $P = P(p)$ is the random subcomplex obtained by taking the full $\Z^3$ lattice and adding each plaquette independently with probability $p.$ 

For a loop $\gamma$ of edges in $\Z^3,$ we define $U_{\gamma}$ to be the event that $\gamma$ is contractible in $P.$ An equivalent definition is that $U_{\gamma}$ is the event that $0 = \brac{\gamma} \in \pi_1\paren{P},$ where we recall that the fundamental group $\pi_1\paren{P}$ is the group of loops up to homotopy, or continuous deformation. The formal definition of the fundamental group involves a choice of basepoint for the loop, but in a path connected space, the choice turns out not to matter (see~\cite{hatcher2002algebraic} for more information). This event is difficult to study directly, so we approach it using the dual lattice $\paren{\Z^3}^* \coloneqq \Z^3 + \paren{1/2,1/2,1/2}.$ Each bond of $\paren{\Z^3}^*$ intersects exactly one plaquette of $\Z^3,$ so taking $q=1-p,$ we may couple $P$ and a dual bond percolation $B$ with parameter $q$ so that each plaquette appears in $P$ if and only if the intersecting dual bond does not appear in $B.$ 

A set of dual bonds $E$ is called entangled if there is no embedding $\mathcal{S}$ of the 2-sphere $\mathbb{S}^2$ into $\R^3 \setminus E$ so that $E$ intersects both connected components of $\R^3 \setminus \mathcal{S}.$ This definition is more closely connected to the plaquette system than it may appear because there is a continuous map (more specifically, a deformation retraction) from $\R^3 \setminus B$ onto $P$~\cite{duncan2020homological}. As a result, we may consider continuous images of spheres in $P$ instead of embedded spheres in $\R^3 \setminus B,$ and our first step on the way to analyzing the behavior of loops in the plaquette system is to characterize $U_{\gamma}$ in terms of dual entanglement.

\begin{prop}\label{prop:vgammadual}
    Let $\gamma$ be a loop in $\Z^3.$ Then $U_{\gamma}$ occurs if and only if there is no set of dual bonds $E \subset B$ so that $E \cup \gamma$ is entangled.
\end{prop}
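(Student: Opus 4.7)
The plan is to use the deformation retraction $\rho\colon \R^3 \setminus B \to P$ from~\cite{duncan2020homological} to reformulate $U_\gamma$ as the event that $\gamma$ bounds a continuous disk $D\colon \ball^2 \to \R^3 \setminus B$. Since $\rho$ is a homotopy equivalence and $\gamma$ (as a union of integer-lattice edges) sits inside $\R^3 \setminus B$, this reformulation is valid. The proposition then reduces to a correspondence between such disks and separating $2$-spheres obtained via tubular neighborhoods.

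For the direction $U_\gamma \Rightarrow$ no entangling $E$, assume such a $D$ exists and fix $E \subset B$. The image of $D$ is compact and disjoint from the closed set $E$, so it admits a compact regular neighborhood $N$ in $\R^3 \setminus E$; after approximating $D$ by a PL embedded disk (which may be done without leaving $\R^3 \setminus B$), $N$ may be taken as a topological $3$-ball bounded by a $2$-sphere $\mathcal{S} \subset \R^3 \setminus (E \cup \gamma)$. Since $\gamma \subset D \subset N^{\circ}$ sits inside $\mathcal{S}$ while $E \subset \R^3 \setminus N$ sits outside, $\mathcal{S}$ splits $E \cup \gamma$ nontrivially (provided $E \neq \emptyset$), so $E \cup \gamma$ is not entangled.

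For the converse, I would argue the contrapositive: if $U_\gamma$ fails, produce $E \subset B$ with $E \cup \gamma$ entangled. The natural choice is the \emph{entanglement cluster of $\gamma$} inside $B \cup \gamma$, namely a maximal subset $E \subset B$ for which $E \cup \gamma$ cannot be nontrivially split by any $2$-sphere; then $E \cup \gamma$ is entangled by construction. The task is to show $E$ is nonempty when $U_\gamma$ fails. If it were empty, every bond of $B$ could be peeled off $\gamma$ by a separating sphere, and iterating through the finitely many bonds lying in any fixed compact neighborhood of $\gamma$ (using local finiteness of the dual lattice) would yield a topological $3$-ball containing $\gamma$ and disjoint from $B$, inside which $\gamma$ is trivially contractible---contradicting the failure of $U_\gamma$. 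The main obstacle is making this iterative peeling rigorous: the separating spheres can be chosen arbitrarily and must be combined compatibly across the iteration, which I expect to hinge on PL approximation, compactness of $\gamma$, and local finiteness of the dual lattice.
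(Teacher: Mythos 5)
Your forward direction is essentially the paper's argument: obtain a PL embedded disk with boundary $\gamma$ in $\R^3\setminus B$ and thicken it to a ball whose boundary sphere witnesses the non-entanglement. One small correction: passing from the continuous disk image to a PL \emph{embedded} disk is not an approximation step but an invocation of Dehn's Lemma (Theorem~\ref{lemma:dehn}); a PL approximation of a singular map can still be singular, and the paper explicitly routes through Dehn's Lemma for this reason.

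The converse, however, has a genuine gap. You reduce to showing that if the entangled component of $\gamma$ in $B\cup\gamma$ is $\gamma$ alone, then $\gamma$ can be enclosed in a ball disjoint from $B$. Your proposed mechanism --- iteratively peeling off the finitely many bonds in a compact neighborhood of $\gamma$, one separating sphere at a time --- does not reach this conclusion: even after clearing a neighborhood, infinitely many bonds of $B$ remain outside it, and nothing forces the spheres produced at successive stages to nest or to stabilize into a single sphere separating $\gamma$ from \emph{all} of $B$. This is precisely the content of the paper's Lemma~\ref{lemma:separation} and Corollary~\ref{cor:dualseparation} (a sharpened form of Grimmett--Holroyd's Proposition 2.4), whose proof goes via an exhaustion by cubes $Q_n$, a wired/truncated entanglement argument inside each $Q_n$, and a diagonal extraction to show that failure of separation forces the entangled component to be infinite; local finiteness of the lattice alone is not enough. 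You also need Theorem~\ref{theorem:alexandersimply} (the PL Sch\"onflies theorem of Alexander) to conclude that the resulting PL sphere bounds a ball in which $\gamma$ contracts --- ``a sphere bounds a ball'' is exactly what fails for wild embeddings, so it must be cited rather than asserted. Once Corollary~\ref{cor:dualseparation}, Corollary~\ref{cor:equivalence}, and Theorem~\ref{theorem:alexandersimply} are in hand, your contrapositive framing closes up and matches the paper's proof.
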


We also show that the event of an entangled crossing between opposite sides of a box is dual to the event that the loop generated by the remaining sides is contractible in Proposition~\ref{prop:duality}. These results are in the same vein as the duality between entangled paths and enclosing spheres studied by Grimmett and Holroyd in~\cite{grimmett2010plaquettes}.

Entanglement percolation is difficult to work with because of long-range dependence, so we also introduce a restricted version --- called $m$-entanglement --- in which two vertices of $\paren{\Z^3}^*$ are connected by a long-range edge if they are contained in an entangled set with at most $m$ bonds. Let $C^m(a)$ be the $m$-entangled component of $a\in \paren{\Z^3}^*$, or in other words the connected component of $a$ once the appropriate long-range edges are included. Then we can define the threshold
\[p^{m}_c = \inf\{p\, : \, \mathbb{P}_p\paren{|C^{m}(a)| = \infty} > 0\}\,.\]

We prove that there is an area law for the event $U_{\gamma}$ when the dual bond parameter is above the limit of the former thresholds
\[p^{\lim}_c = \lim_{m \to \infty} p_c^m\,\]
and a perimeter law below the related threshold 
\[\pi_c^e = \inf\{p:\mathbb{E}(|C^e(a)|) = \infty\}\,\]
for the expected size of $C^{e}(a)$, the entangled component of a vertex $a\in \paren{\Z^3}^*.$ 

\begin{theorem}
Let $\gamma$ be a rectangular loop in a lattice plane. Let $U_\gamma$ be the event that $\gamma$ is contractible in $P.$ 

Then there is a constant $\alpha(p)\in (0,\infty)$ such that
  \begin{align*}
      \log \mathbb{P}_p(U_\gamma) 
     \begin{cases}
     \sim  -\alpha(p) \mathrm{Area}(\gamma) &\qquad p < 1-p_c^{\lim}\\
     = - \Theta(\mathrm{Per}(\gamma)) &\qquad p > 1-\pi_c^e
      \end{cases},
  \end{align*}
  as the dimensions of $\gamma$ tend to $\infty$.
\end{theorem}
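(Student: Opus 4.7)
My plan is to reformulate $U_\gamma$ via Proposition~\ref{prop:vgammadual} as the event that no dual-bond set $E \subseteq B$ makes $E \cup \gamma$ entangled, and then treat the two regimes separately.

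In the area law regime $p < 1 - p_c^{\lim}$ (so $q > p_c^{\lim}$), the easy lower bound $\mathbb{P}_p(U_\gamma) \geq p^{\mathrm{Area}(\gamma)}$ follows from demanding that every plaquette of the flat rectangular disk bounded by $\gamma$ is present. For existence of the limit $\alpha(p)$, I would use that $U_\gamma$ is increasing in plaquettes: if two rectangular loops $\gamma_1$ and $\gamma_2$ share an edge with opposite orientations, then $U_{\gamma_1} \cap U_{\gamma_2} \subseteq U_{\gamma_1 \# \gamma_2}$, so FKG yields $\mathbb{P}_p(U_{\gamma_1 \# \gamma_2}) \geq \mathbb{P}_p(U_{\gamma_1}) \mathbb{P}_p(U_{\gamma_2})$, and a two-dimensional Fekete argument extracts the limit. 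To obtain $\alpha(p) > 0$ I would fix $m$ with $q > p_c^m$ and partition the spanning disk of $\gamma$ into $\Theta(\mathrm{Area}(\gamma))$ disjoint columns transverse to the disk, separated by $m$-buffers. Within each column, ergodicity together with the percolation of infinite $m$-entangled clusters should provide a uniform $\delta = \delta(m,p) > 0$ probability that an $m$-entangled string crosses from one side of $\gamma$'s plane to the other; any such string Hopf-links $\gamma$ and thus obstructs $U_\gamma$. Because these events are determined by disjoint bond neighborhoods (the $m$-buffers absorb the boundary dependence of $m$-entanglement), they are independent, giving $\mathbb{P}_p(U_\gamma) \leq (1-\delta)^{\Theta(\mathrm{Area}(\gamma))}$.

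In the perimeter law regime $p > 1 - \pi_c^e$ (so $q < \pi_c^e$), the upper bound comes from placing $\Theta(\mathrm{Per}(\gamma))$ disjoint unit-scale boxes along $\gamma$ and observing that in each one a short dual loop around the local arc of $\gamma$ occurs with probability at least $q^k > 0$ and forms an entangled obstruction; independence of disjoint boxes yields $\mathbb{P}_p(U_\gamma) \leq e^{-c\,\mathrm{Per}(\gamma)}$. For the matching lower bound I would clear all dual bonds from a thin tube around $\gamma$ (cost $(1-q)^{O(\mathrm{Per}(\gamma))}$) and then use the finiteness of $\mathbb{E}_p|C^e(a)|$ to bound the expected number of entangled clusters in the exterior that Hopf-link $\gamma$ by $O(\mathrm{Per}(\gamma))$. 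A first-moment/FKG argument conditional on the empty tube then produces a positive-probability event of ``no linking,'' yielding $\mathbb{P}_p(U_\gamma) \geq e^{-C\,\mathrm{Per}(\gamma)}$.

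The main obstacle will be the strict positivity of $\alpha(p)$: one must verify that an $m$-entangled crossing inside a column is actually an obstruction in the sense of Proposition~\ref{prop:vgammadual}. Because entanglement is defined through continuous $2$-spheres in $\R^3 \setminus E,$ one must rule out separating spheres that weave through the buffer regions between columns, and one must certify that the ``crossing'' event is both local to its column and nontrivially linked with $\gamma$ as a continuous object. I expect this to require combining the duality underlying Proposition~\ref{prop:vgammadual} with a quantitative column-independence statement absorbing the non-locality of entanglement into the $m$-buffers, which is precisely where the truncation to $m$-entanglement becomes essential.
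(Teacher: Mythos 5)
Your proposal gets the scaffolding right---reformulating $U_\gamma$ via Proposition~\ref{prop:vgammadual}, the trivial lower bound $p^{\mathrm{Area}(\gamma)}$ in the area regime, the FKG/Fekete argument for the existence of $\alpha(p)$, and the perimeter upper bound (your variant via small dual loops around single edges works just as well as the paper's ``every edge is covered'' bound). But there are two substantive gaps, one in each regime.

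For the area law upper bound, the claim that an $m$-entangled ``string'' crossing the plane of $\gamma$ inside a bounded column ``Hopf-links $\gamma$ and thus obstructs $U_\gamma$'' is false. A finite path through the disk spanned by $\gamma$ is not a closed curve, so it cannot form a Hopf link; indeed $\gamma$ together with any finite tree of dual bonds is always separable by a sphere, so by Proposition~\ref{prop:vgammadual} it never obstructs $U_\gamma$. What is actually needed is a \emph{closed dual loop} linking $\gamma$ (or a doubly infinite entangled structure passing through $\gamma$ an odd number of signed times), and disjoint columns with $m$-buffers between them do not assemble into such a loop. This is precisely why the paper must import the Grimmett--Marstrand slab machinery: Theorem~\ref{theorem:mslabcontinuity} and the renormalization of Theorem~\ref{theorem:arearenorm} produce an open renormalized \emph{circuit} of $m$-entangled connections in a slab around $\gamma$'s plane, and Proposition~\ref{prop:entangledrenorm} (via Corollary~\ref{cor:boxcomponents}) certifies that any separating sphere for the resulting entangled set would have to enclose $\gamma$. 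Your acknowledged ``main obstacle'' is therefore not a quantitative column-independence bookkeeping issue but a structural one: the obstruction must be a loop, and your decomposition never builds one.

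For the perimeter law lower bound, the first-moment bound ``expected number of entangled clusters that Hopf-link $\gamma$ is $O(\mathrm{Per}(\gamma))$'' does not hold, and clearing a tube around $\gamma$ does not rescue it. A linking cluster must thread through the \emph{disk} spanned by $\gamma$, not through the tube: it can pass through the interior far from $\gamma$ itself, so a tube clearance around $\gamma$ is irrelevant to it. The natural first-moment count, summing $\mathbb{E}|C^e(a)|$ over dual vertices $a$ near the spanning disk, gives $O(\mathrm{Area}(\gamma))$. Even if one somehow had $O(\mathrm{Per})$, the conditional probability of ``no linking'' given the cleared tube would still decay with the area, so the product would not give the claimed $e^{-C\,\mathrm{Per}}$. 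The paper's device avoids exactly this problem: it erects a cylinder of \emph{vertical} dual lines $K^i$ through the inner boundary plaquettes of $\gamma$ (only $\Theta(\mathrm{Per})$ of them), shows via Corollary~\ref{cor:zaxis} that each line is disentangled from the plane $\{z=0\}$ with a probability uniformly bounded away from $0$ when $q<\pi_c^e$, and then proves (Lemma~\ref{lemma:perimeter}, via a generalized Reidemeister-move argument) that the conjunction of these disentanglement events together with the a.s.\ event of no infinite entangled component forces $U_\gamma$. The FKG inequality applied to $\Theta(\mathrm{Per})$ such events then yields the desired exponential-in-perimeter lower bound without any need to control a disk-area's worth of clusters.
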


We note that the $p^{m}_c$ are strictly decreasing in $m$ by the results of~\cite{AG91,balister2014essential}. In particular, $p_c^{\lim}<p_c\paren{\Z^3}$ so this transition is distinct from that for the probability that a loop is nullhomologous as considered in~\cite{ACCFR83}.
To show that this transition is sharp, it would be enough to show that 
\[p_c^{\lim} = \pi_c^e\,,\]
which remains an open question. It follows from the techniques of~\cite{duminil2019sharp} that $p_c^m = \pi_c^m$ for each $m,$ where $\pi_c^m$ is the threshold for infinite expected cluster sizes in $m$-entanglement percolation. The sharpness of the transition can then be thought of in terms of continuity of these two thresholds in $m.$ For a different class of enhancements in $\Z^2$ we show that $p_c^{\lim}=p_c^e$ in a companion paper~\cite{duncan2025enhancement}.

\begin{figure}
	\includegraphics[height=0.4\textwidth]{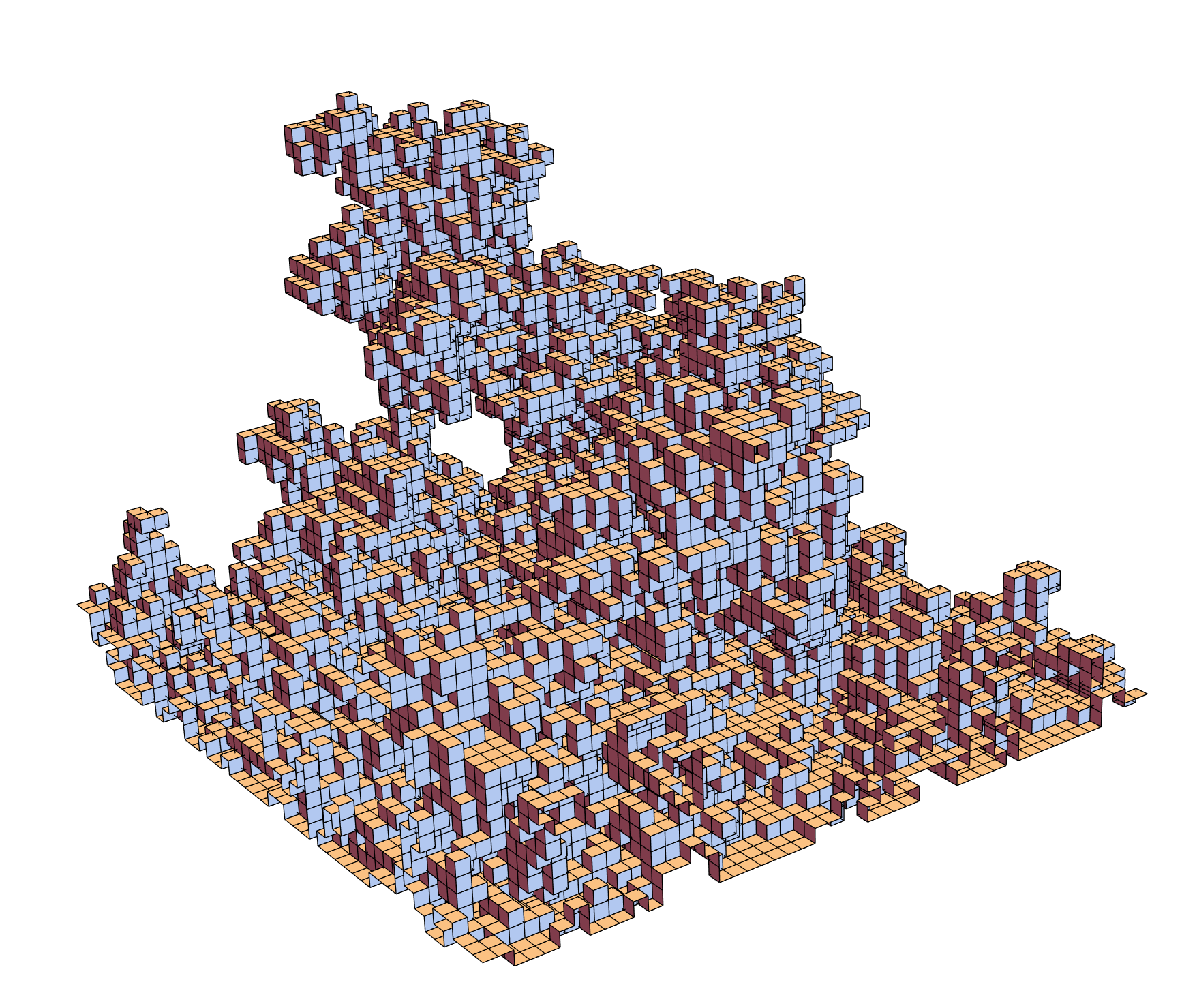}
	\includegraphics[height=0.4\textwidth]{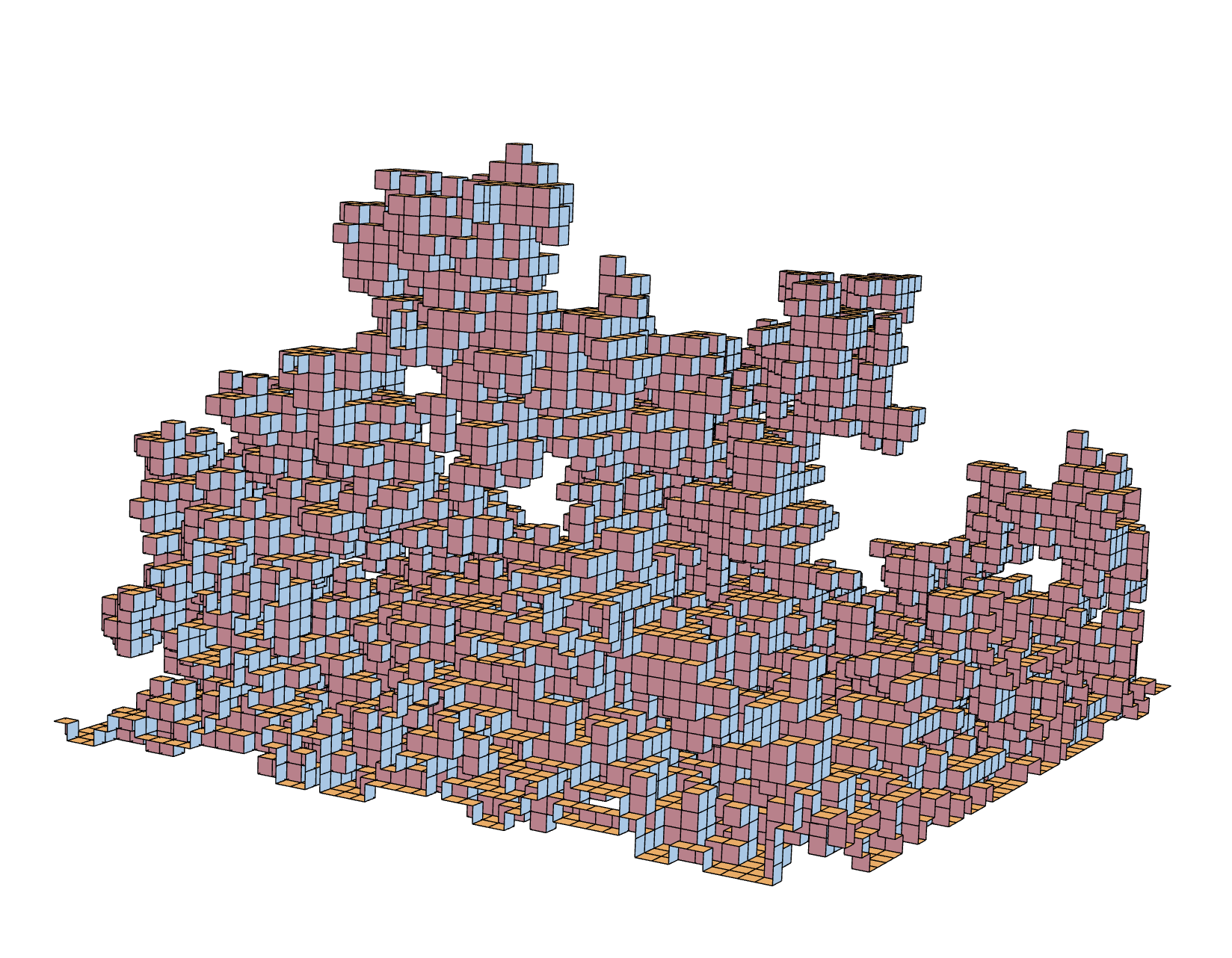}
    \caption{Two views of a plaquette crossing of $\Lambda_{25}$ in plaquette percolation with probability $p=1-0.246\approx 1-p_c\paren{\Z^3}+.003,$ chosen among the crossings with a minimal number of plaquettes. Note the presence of handles. The figure was created by Benjamin Atelsek, Gregory Maleski, and Tristan Napoliello as part of a project run through the Mason Experimental Geometry Lab (MEGL).}
\label{fig:plaquettecrossing}
\end{figure}

We also consider box crossings, in which we have analogous behavior in the same regimes.

\begin{prop}
Consider plaquette percolation on the cube $\Lambda_N=\brac{-N,N}^3$ and let $O$ be the event that there is a collection of plaquettes separating the top of the cube from the bottom of the cube that is homeomorphic to a disk. Then
$$\mathbb{P}_p\paren{O}\to\begin{cases} 0 & p<1-p_c^{\lim} \\ 1 & p>1-\pi_c^{e}\end{cases}$$
with high probability as $N\to\infty.$ 
\end{prop}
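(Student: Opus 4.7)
\begin{sproof}
The plan is to use the duality from Proposition~\ref{prop:duality} to reformulate $O$ as a dual entanglement crossing event, and then to treat the two regimes separately with the theory of $m$-entanglement.

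First I would apply Proposition~\ref{prop:duality} to $\Lambda_N$ with the top and bottom faces as opposite sides. This identifies $O$ with the contractibility in $P$ of the loop generated by the four side faces, so $O^c$ is equivalent to the existence of an entangled set of dual bonds in $\Lambda_N$ connecting the top face to the bottom face. The problem reduces to estimating the probability of such a dual entangled crossing at the dual parameter $q = 1-p$.

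For $p < 1 - p_c^{\lim}$ the dual parameter satisfies $q > p_c^{\lim}$, and by definition of $p_c^{\lim}$ one can choose $m$ large enough that $q > p_c^m$, so the $m$-entanglement process is strictly supercritical on $\paren{\Z^3}^*$. Because $m$-entanglement is a finite-range bond model (each long-range edge is determined by bonds in a window of diameter $O(m)$), supercriticality yields top-to-bottom $m$-entangled crossings of $\Lambda_N$ with probability tending to $1$, either via the Grimmett--Marstrand slab theorem adapted to $m$-entanglement or via the renormalization developed earlier in the paper. Any $m$-entangled crossing is a fortiori an entangled crossing, so $\mathbb{P}(O^c) \to 1$ and hence $\mathbb{P}(O) \to 0$.

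For $p > 1 - \pi_c^e$ we have $q < \pi_c^e$, so $\mathbb{E}_q \abs{C^e(a)} < \infty$. A first-moment estimate over top-face vertices gives
\[
\mathbb{P}\paren{\text{dual entangled crossing of } \Lambda_N} \;\leq\; O(N^2)\cdot \mathbb{P}\paren{\mathrm{diam}(C^e(a)) \geq 2N},
\]
and since any entangled set of $\R^3$-diameter $d$ must contain $\Omega(d)$ unit bonds, a bare Markov bound on $\abs{C^e(a)}$ only produces $O(1/N)$ on the right, which is not enough to beat the $N^2$ factor from the boundary. The main obstacle is therefore to extract faster decay of $\mathbb{P}(\mathrm{diam}(C^e(a)) \geq n)$ under $q < \pi_c^e$. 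I would attack this by combining the sharpness identity $p_c^m = \pi_c^m$ for each finite $m$ (which follows from the techniques of~\cite{duminil2019sharp}, as noted in the paper) with a coarse-graining step: fix a length scale $L$, note that $q < \pi_c^m$ for the relevant truncation $m=m(L)$, and decompose a putative macroscopic entangled crossing into $L$-scale entangled segments whose sizes are each controlled by the exponential tails of $m$-entanglement. Carrying this decomposition out cleanly -- so that the overlaps and the genuinely long-range entanglement edges between segments are bounded -- is the delicate step and the real crux of the proof.
\end{sproof}
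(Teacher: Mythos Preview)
Your treatment of the case $p<1-p_c^{\lim}$ is correct and lines up with the paper. The paper deduces this case from Theorem~\ref{thm:handles} (any plaquette crossing has $\gtrsim N^2/\log N$ handles, so in particular no crossing is a disk), but the only probabilistic input to that theorem is precisely the high-probability $m$-entangled top--bottom crossing you describe, produced via the slab continuity of Theorem~\ref{theorem:mslabcontinuity} (cf.\ Lemma~\ref{lemma:thincrossing} and Proposition~\ref{prop:slabrenorm}). Your direct route through Proposition~\ref{prop:duality} is a legitimate shortcut to the weaker statement.

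For $p>1-\pi_c^e$ there is a real gap. You are right that Markov on $\abs{C^e(a)}$ combined with an $O(N^2)$ union bound over boundary vertices only yields $O(N)$, not $o(1)$. But your proposed repair --- decompose a putative entangled crossing into $L$-scale pieces and control each piece via the exponential tails of $m$-entanglement for a truncation $m=m(L)$ --- does not work as stated. The restriction of an entangled set to a sub-box need not be entangled, let alone $m$-entangled: a Borromean configuration intersected with a box meeting only two of its three rings is already a counterexample. So there is no a priori reason an entangled crossing decomposes into locally entangled segments at any fixed scale, and the ``overlaps and genuinely long-range entanglement edges between segments'' you flag as delicate are not a technical nuisance but the entire obstruction --- they can carry the whole entanglement structure. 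Knowing $q<\pi_c^m=p_c^m$ for every finite $m$ gives exponential tails for each $C^m$, but no uniform-in-$m$ control and hence nothing directly about $C^e$; this is exactly the non-locality issue the paper emphasizes. The paper asserts this case follows from Proposition~\ref{prop:duality} together with a union bound, without elaboration; whatever argument is intended there, your coarse-graining scheme is not it and would need a genuinely different idea to close.
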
 

The case $p>1-\pi_c^{e}$ follows from a straightforward application of Proposition~\ref{prop:duality} and a union bound. The case $p<1-p_c^{\lim}$ is a consequence of a result giving a more detailed view of the plaquette system when $1-p_c\paren{\Z^3}<p<1-p_c^{\lim}$, which we now describe.

We say that a set of plaquettes $P' $ is a plaquette crossing if $P'$ separates the top and bottom faces of $\Lambda_N$ and $P'\setminus \sigma$ does not for any plaquette $\sigma\in P'.$ For $p\in \left(1-p_c\paren{\Z^3},1-p_c^{\lim}\right)$, all plaquette crossings of a sufficiently large cube contain many handles. For a percolation subcomplex $P$ of $\Lambda_N$ containing no $2$-cells of $\partial \Lambda_N$ let 
$\mathcal{S}\paren{P}$ be the collection of plaquette crossings of $\Lambda_N$ contained in $P.$
Define $\mathrm{mh}_N\paren{P}$ be be $\infty$ if $\mathcal{S}\paren{P}=\varnothing$ and otherwise 
\[\mathrm{mh}_N\paren{P} \coloneqq \min_{S \in \mathcal{S}\paren{P}} \rank H_1\paren{S\cup \partial \Lambda_N}\,,\]
where $H_1\paren{X}$ is the first homology group of $X$ with coefficients in $\Z.$  Our main result in the intermediate regime gives the growth rate of $\mathrm{mh}_N\paren{P}$ up to a logarithmic factor.

\begin{theorem}\label{thm:handles}
Let $1-p_c\paren{\Z^3}<p<1-p_c^{\lim}.$ Then there exist $c_1\paren{p},c_2\paren{p} > 0$ so that 
\[c_1\paren{p} \frac{N^2}{\log N} < \mathrm{mh}_N\paren{P} < c_2 N^2\]
with high probability as $N \to \infty.$
\end{theorem}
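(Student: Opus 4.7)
\begin{sproof}
\textbf{Upper bound.} Since $p > 1 - p_c\paren{\Z^3}$, dual bond percolation at $q = 1-p$ is subcritical and cluster sizes decay exponentially. Construct a specific crossing $S$ column by column: for each $\paren{i,j}$ in the base of $\Lambda_N$, take the lowest plaquette of $P$ above height $0$ and patch horizontally around missing plaquettes using the nearby (exponentially small) dual clusters. A renewal-type estimate yields $|S^{\paren{2}}| = O\paren{N^2}$ with high probability. Since $\rank H_1\paren{X} = O\paren{|X^{\paren{2}}|}$ for any bounded-degree finite $2$-complex $X$, and $\partial \Lambda_N$ contributes $O\paren{N^2}$ additional cells, we obtain $\mathrm{mh}_N\paren{P} \leq c_2 N^2$.

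\textbf{Lower bound.} Fix $m$ with $p < 1 - p_c^m$, which exists since $p_c^m \searrow p_c^{\lim}$ and $p < 1 - p_c^{\lim}$; thus dual $m$-entanglement is supercritical at $q = 1 - p$. Partition the $N \times N$ cross-section of $\Lambda_N$ into disjoint square columns $C_1,\ldots,C_M$ of side $\ell = \Theta\paren{\sqrt{\log N}}$ running the full height, so $M = \Theta\paren{N^2/\log N}$. The plan is to show that each column containing an $m$-entangled top-to-bottom dual crossing contributes an independent generator of $H_1\paren{S \cup \partial \Lambda_N}$ for every $S \in \mathcal{S}\paren{P}$, and then to verify that a positive fraction of columns are successful.

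For the geometric step, adapt Proposition~\ref{prop:duality} to a single column: an $m$-entangled dual subset joining the top and bottom faces of $C_i$ produces, via the deformation retract $\R^3 \setminus B \to P$ of~\cite{duncan2020homological} restricted to $C_i$, a loop in $P \cap C_i$ whose contraction within $P \cap C_i$ is obstructed by this dual set. Any $S \in \mathcal{S}\paren{P}$ must absorb this obstruction as a class in $H_1\paren{S \cup \partial \Lambda_N}$ geometrically supported near $C_i$. Pairing each such class with a linking cycle inside the corresponding dual entangled set and noting that the linking pairing is block-diagonal across disjoint columns gives linear independence. A block/renormalization argument using supercriticality of $m$-entanglement then supplies a uniform positive lower bound $\delta\paren{p,m} > 0$ on the probability of success in a single column once $\ell$ and $N$ are large enough, and a Chernoff bound delivers $\Omega\paren{N^2/\log N}$ successful columns with high probability.

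\textbf{Main obstacle.} The hardest ingredient is the column-local duality together with the simultaneous linear independence of the resulting $H_1$ classes: Proposition~\ref{prop:duality} is a global statement, and adapting it to a column requires careful treatment of the column boundary and of how the deformation retract of~\cite{duncan2020homological} behaves there, while independence relies on a linking-number computation across disjoint columns. The logarithmic factor traces back to the column width $\ell = \Theta\paren{\sqrt{\log N}}$, which appears to be the smallest scale on which the localized duality and a uniformly positive per-column crossing probability can both be maintained; closing the gap with the $N^2$ upper bound would seem to require new ideas.
\end{sproof}
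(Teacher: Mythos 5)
Your proof proposal has the right overall structure (bound the size of some crossing above, and split $\Lambda_N$ into $\Theta(N^2/\log N)$ sub-boxes with independent crossing events below), but there is a genuine gap in the lower bound that invalidates the stated exponent.

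\textbf{The column shape is wrong.} You partition the cross-section of $\Lambda_N$ into square columns of side $\ell = \Theta(\sqrt{\log N})$, so each column has dimensions $\Theta(\sqrt{\log N}) \times \Theta(\sqrt{\log N}) \times 2N$. The per-column probability of an $m$-entangled top-to-bottom dual crossing is \emph{not} bounded away from zero for this shape; it tends to $0$. The available tool is the slab renormalization of $m$-entanglement (Proposition~\ref{prop:slabrenorm}), which produces supercritical 2D site percolation inside a slab of \emph{constant} thickness $k$. Inside such a slab your column looks like a $\sqrt{\log N} \times 2N$ rectangle, and a supercritical crossing of the long ($2N$) direction requires the subcritical dual to avoid crossing the short ($\sqrt{\log N}$) direction. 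With $\sim N$ potential starting heights and each dual crossing costing $\exp(-d\sqrt{\log N})$, the expected number of dual crossings is $N\exp(-d\sqrt{\log N}) \to \infty$, so w.h.p. the dual \emph{does} cross and the primal column crossing fails. A direct 3D count gives the same conclusion. The paper instead tiles by rectangular prisms $R_j$ of dimensions $a\log N \times k \times 2N$ with $k$ constant (Lemma~\ref{lemma:thincrossing}); the constant dimension is exactly what allows the slab renormalization, and the $a\log N$ dimension defeats the union bound ($N\exp(-da\log N)\to 0$ for $a$ large). This gives $\lfloor N/(a\log N)\rfloor \cdot \lfloor N/k \rfloor = \Theta(N^2/\log N)$ boxes with a uniformly positive crossing probability in each --- same count as yours, but only because the \emph{shape}, not merely the cross-sectional area, is tuned correctly.

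\textbf{Two further issues worth noting.} First, your linear-independence argument via a linking pairing is not set up to handle genuinely entangled (rather than linked) obstructions: an $m$-entangled dual set can be a disjoint union of unknots with pairwise trivial linking (Borromean-type configurations), so there may be no ``linking cycle inside the dual entangled set'' to pair against. The paper sidesteps this by a surgery construction (Proposition~\ref{prop:surfacehandle}) showing that if $B_R$ contains an entangled crossing of a sub-box $R$, the relevant piece of the crossing surface cannot be a sphere, hence is a connected sum of tori with nontrivial $H_1$ mapping injectively into $H_1(W_R' \cup \partial R)$; the local contributions are then combined via Mayer--Vietoris rather than linking. Second, for the upper bound your renewal-type column-by-column patching is roughly in the right spirit, but verifying it yields a bona fide plaquette crossing is nontrivial; the paper's construction (take $P'$ to be the plaquette boundary of the dual component $F$ meeting $D_1^-$, facing the $D_1^+$ side) is a ready-made crossing of size $\leq 20|F|$, and $|F| = O(N^2)$ by the law of large numbers for subcritical cluster sizes.
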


\section{Outline}
In Section~\ref{sec:duality} we define the limited notion of entanglement that we are working with and prove Proposition~\ref{prop:vgammadual} relating the event $U_{\gamma}$ to the dual bond system. Along the way we discuss useful results on surfaces in $\R^3$ that allow us to avoid pathologies in Section~\ref{sec:subtleties} and provide tools to find spheres separating particular sets in Section~\ref{sec:sepspheres}. We then prove perimeter law and area law bounds in Sections~\ref{sec:perim} and~\ref{sec:area}. In both cases we roughly follow the arguments for analogous results in~\cite{ACCFR83}, though significant modifications are required in both settings. We also show that the critical probability for $m$-entangled percolation is continuous in slabs in Section~\ref{sec:slabs}. Finally, in Section~\ref{sec:genus} we prove Theorem~\ref{thm:handles} by showing that dual entangled crossings of smaller subboxes each create nontrivial homology in any plaquette crossing of a larger box.

\section{Limited Entanglement and Entanglement Duality}\label{sec:duality}
We now define $m$-entanglement more precisely: Given a graph $G,$ we will build a family of graphs $E(m) = E(G,m)$ with the same vertex set $V$ of $G.$ For $S\subset V,$ with $\abs{S}\leq m,$ include a complete subgraph on the vertices of $S$ if $S$ is entangled. Then $v_1,v_2 \in V$ are adjacent if there is an entangled subgraph of $G$ containing $v_1$ and $v_2$ with at most $m$ edges. Then we say that $G$ is $m$-entangled if $E(G,m)$ is connected. Notice that $m$-entangled percolation is $m$-dependent, or in other words the states of edges at $L^1$ distance greater than $m$ are independent.

For completeness, we verify that $m$-entanglement is a stronger condition than general entanglement.
\begin{lemma}
Let $m \geq 1.$ If $G$ is $m$-entangled, then $G$ is also entangled.
\end{lemma}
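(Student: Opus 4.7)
The plan is a direct contrapositive/contradiction argument using the defining sphere of non-entanglement.

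Suppose for contradiction that $G$ is $m$-entangled but not entangled. By the definition of entanglement, there exists an embedded sphere $\mathcal{S} \subset \R^3 \setminus G$ such that $G$ meets both connected components of $\R^3 \setminus \mathcal{S}.$ Since every edge of $G$ is a closed segment containing its two endpoints, and $\mathcal{S}$ is disjoint from $G,$ each edge lies entirely in one component of $\R^3 \setminus \mathcal{S}.$ In particular, because $G$ has edges on both sides, the vertex set $V$ of $G$ also contains vertices on both sides of $\mathcal{S}.$

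Now I would exploit connectivity of $E(G,m).$ Color a vertex $v \in V$ red or blue according to which component of $\R^3 \setminus \mathcal{S}$ contains it; by the previous paragraph both colors occur. Since $E(G,m)$ is connected and contains vertices of each color, some edge $\{v_1,v_2\}$ of $E(G,m)$ joins a red vertex to a blue one. By the definition of $E(G,m),$ there is an entangled subgraph $E' \subset G$ with at most $m$ edges such that $v_1,v_2 \in E'.$

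Finally I would derive the contradiction: since $E' \subset G$ we have $\mathcal{S} \cap E' = \varnothing,$ so $\mathcal{S}$ is an embedded sphere in $\R^3 \setminus E'.$ But $v_1$ and $v_2$ lie in opposite components of $\R^3 \setminus \mathcal{S},$ so $E'$ meets both components, contradicting the entanglement of $E'.$ There is no real obstacle here; the only point that deserves a sentence of care is the observation that edges are taken together with their endpoints, so that vertices inherit a well-defined side of $\mathcal{S}$ and a "crossing" edge of $E(G,m)$ must exist whenever $E(G,m)$ is connected and has vertices on both sides.
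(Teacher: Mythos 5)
Your proposal is correct and follows essentially the same route as the paper: assume a separating sphere exists, note it partitions the vertices into two nonempty classes, use connectivity of $E(G,m)$ to find a long-range edge joining the two classes, and then observe that the witnessing entangled subgraph with at most $m$ edges would itself be separated by the sphere, a contradiction. The only difference is that you spell out the (correct) small observation that each closed edge lies entirely on one side of $\mathcal{S}$, which the paper leaves implicit.
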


\begin{proof}
Supposed that $G$ is $m$-entangled and separated by a sphere $\mathcal{S}.$ Then $\mathcal{S}$ also separates the vertices of $E(G,l)$ into $A\cup B$ with $A,B \neq \emptyset,$ where the connected components corresponding to the vertices of $A$ are in the bounded component of $\R^3 \setminus \mathcal{S}$ and the components corresponding to the vertices of $B$ are in the unbounded component of $\R^3 \setminus \mathcal{S}.$ But then since $E(G,m)$ is connected, there must be an edge between a vertex of $A$ and a vertex of $B.$ Therefore, $\mathcal{S}$ separates an entangled set, a contradiction.
\end{proof}

Before we continue, we note that there are other possible limited notions of entanglement. Another option is to only allow entangled sets with a bounded number of connected components rather than a bounded number of edges. A third, considered by Atapour and Madras~\cite{atapour2010number} involves trying to fill in an entangled set by filling in each loop of bonds with a minimal surface. If the set is similar to a bracelet made of chain links, the resulting set will be connected after one step. However, if a larger bracelet is constructed out of \say{loops}, each of which is a bracelet of the previous type, a second round of fillings is required to connect the whole set. In this way, one can define a limited entanglement model by only allowing a bounded number of steps in this process. However, it remains an open question whether any of the three limited models are probabilistically equivalent to each other or to general entanglement in the limit.

\subsection{Topological Subtleties}\label{sec:subtleties}
In this section, we will deal in some detail with different types of mappings of spheres and disks into $\R^3.$ We pause here to provide some context and examples that motivate these distinctions. When we study the event $U_\gamma,$ we will want to characterize whether $\gamma$ is contractible in $P$ in terms of the topology of $P.$ The most straightforward way for $\gamma$ to be contractible is if it is the boundary of an embedded disk (recall that an embedding is a map that is a homeomorphism onto its image). However, it is not difficult to see that this is not a necessary condition: a ``pinched'' disk in which two points are identified also suffices. The correct equivalent condition is  that $\gamma$ is the boundary of a continuous image of a disk in $P.$ Another potentially troublesome issue with embeddings of spheres arises when trying to prove Proposition~\ref{prop:vgammadual}. There, we would like to be able to contract any loop in the complement of an embedded sphere, but the Alexander horned sphere~\cite{alexander1924example} provides an example of a pathological embedding where this is not necessarily possible. That is, the Jordan--Schoenflies Theorem that a simple closed curve embedded in the the two-dimensional sphere splits it into two components homeomorphic to two-dimensional balls does not generalize to three dimensions without further hypotheses.

Fortunately, these complications can be avoided in the current context: $B$ is a closed subset of $\R^3$, which affords us some wiggle room to find ``nice'' embeddings of spheres and disks in $\R^3\setminus B.$  Recall that a piecewise--linear embedding of a surface is an embedding whose image is a finite union of simplices that meet nicely at their boundaries (that is, a collection of vertices, edges, and triangular faces so that two edges can meet only at a single vertex and two faces can meet either at a single vertex or at a single edge). A piecewise--linear embedded curve is defined similarly, but with only vertices and edges.

We now state three fundamental results in the topology of $3$-manifolds, two of which allow us to use piecewise--linear embeddings to avoid these issues.

\begin{theorem}[Dehn's Lemma~\cite{papakyriakopoulos1957dehn}]\label{lemma:dehn}
    Let $\gamma$ be a piecewise--linear embedded curve in a $3$-manifold $M.$ Then if there is a continuous image $\mathcal{I} \subset M$ of a disk so that $\gamma = \partial \mathcal{I},$ then there is also a piecewise--linear embedded disk $\mathcal{D} \subset M$ so that $\gamma = \partial \mathcal{D}.$
\end{theorem}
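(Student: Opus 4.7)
\begin{sproof}
The plan is to use Papakyriakopoulos's tower construction, the standard approach to Dehn's Lemma. First I would reduce to the PL category: since $\gamma$ is already piecewise--linear and $M$ carries a PL structure, I would approximate the continuous disk $\mathcal{I}$ by a PL map $f\colon D^2 \to M$ with $f|_{\partial D^2} = \gamma$ via simplicial approximation, then put $f$ into general position so that its singular set $S(f) \subset D^2$ is a one-dimensional graph whose only non-manifold points are isolated triple points. The goal is then to replace $f$ by a new PL map with the same boundary but empty singular set.

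The main construction is a tower of finite-sheeted covers. Let $N_0$ be a regular neighborhood of $f\paren{D^2}$ in $M$, a compact PL $3$-manifold with boundary. If $f$ is not already embedded, I would exhibit a nontrivial element of $\ker\paren{\pi_1\paren{\partial N_0}\to \pi_1\paren{N_0}}$ arising from a meridian encircling a sheet of $f$, and take the corresponding covering $\widetilde{N}_0 \to N_0$. The lift $\widetilde{f}\colon D^2 \to \widetilde{N}_0$ exists because $D^2$ is simply connected, and a complexity count (weighted by first Betti numbers of regular neighborhoods along with the number of double curves and triple points) shows that $\widetilde{f}$ is strictly simpler than $f$. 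Taking $N_1$ to be a regular neighborhood of $\widetilde{f}\paren{D^2}$ in $\widetilde{N}_0$ and iterating yields a finite tower $N_k \to N_{k-1} \to \cdots \to N_0$ that must terminate: at the top the map $f_k\colon D^2 \to N_k$ is an embedding because no further nontrivial cover is available.

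Now I would push the embedded disk at the top of the tower back down one level at a time. At each stage, the image of an embedded disk under a two-sheeted covering projection has self-intersections organized into a disjoint collection of simple closed curves and proper arcs in $D^2$. The technical heart of the descent is an innermost-disk / cut-and-paste argument: an innermost double curve bounds two disk sub-pieces whose images can be spliced together to reduce the number of intersection curves without affecting $\gamma$. Iterating through the whole tower produces a PL embedded disk $\mathcal{D} \subset M$ with $\partial \mathcal{D} = \gamma$, as required.

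The main obstacle, and the reason this theorem resisted proof for three decades after Dehn's flawed 1910 argument, is verifying simultaneously that the tower terminates and that the descent step genuinely decreases complexity without reintroducing singularities. Carrying out the bookkeeping rigorously requires delicate general-position arguments and careful handling of how triple points interact with innermost double curves; I would rely on the exposition in~\cite{papakyriakopoulos1957dehn} rather than attempt the details from scratch.
\end{sproof}
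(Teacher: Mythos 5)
The paper does not prove Dehn's Lemma; it is stated as Theorem~\ref{lemma:dehn} and cited directly from Papakyriakopoulos's 1957 paper as a black box, as is standard. There is therefore no ``paper's own proof'' to compare against. Your sketch is a reasonable high-level outline of the classical tower argument (in roughly the Shapiro--Whitehead streamlining via double covers): PL approximation and general position, the tower of covers of regular neighborhoods keyed to nontrivial elements of $\ker\paren{\pi_1\paren{\partial N_i}\to\pi_1\paren{N_i}}$, lifting the disk, a complexity count for termination, and an innermost-curve cut-and-paste descent. Two small imprecisions worth flagging: you should commit throughout to \emph{double} covers (you specify ``two-sheeted'' only in the descent, not in the ascent), since that is what makes the descent step's intersection pattern tractable; and the termination criterion is more precisely that at the top of the tower $\partial N_k\to N_k$ is $\pi_1$-injective so that the relevant cover is trivial, rather than the vaguer ``no further nontrivial cover is available.'' Given that you explicitly defer the full bookkeeping to the original reference, the sketch is appropriate in spirit, but for a statement the paper itself treats as a citation, the expected response is simply to cite~\cite{papakyriakopoulos1957dehn} rather than re-derive it.
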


\begin{theorem}[Bing~\cite{bing1957approximating}]\label{theorem:bingapprox}
    Let $\mathcal{S} \subset \R^3$ be an embedded sphere. Then for any $\epsilon > 0$ there is a piecewise--linear embedded sphere $\mathcal{S}'$ and a homeomorphism $f : \mathcal{S} \to \mathcal{S}'$ so that for any $x \in \mathcal{S},$ $d\paren{x,f\paren{x}} < \epsilon.$
\end{theorem}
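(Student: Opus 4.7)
\begin{sproof}
The plan is to build $\mathcal{S}'$ as the image of a sufficiently fine simplicial approximation of the embedding $\iota:\sphere^2 \hookrightarrow \R^3$ parametrizing $\mathcal{S}$, and then to upgrade this approximation to a PL embedding using the fact that $\mathcal{S}$ separates $\R^3$.

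First, since $\sphere^2$ is compact and $\iota$ is uniformly continuous, I would fix a triangulation $T$ of $\sphere^2$ so fine that $\iota$ sends every closed simplex of $T$ to a set of diameter at most $\epsilon/3$. For each vertex $v$ of $T$, pick $p_v \in \R^3$ within $\epsilon/3$ of $\iota(v)$, drawn from a countable dense subset so that the chosen finite vertex set lies in general position, and then extend linearly across each simplex to obtain a PL map $g : |T| \to \R^3$. By convexity of simplices and the triangle inequality, $g$ stays uniformly within $\epsilon$ of $\iota$.

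The key obstacle is that $g$ will typically only be an immersion: two $2$-simplices in $\R^3$ in general position meet in a $1$-dimensional arc, so $g(|T|)$ usually has self-intersections and is not an embedded sphere. To fix this, I would invoke Bing's side approximation technique. The image $\iota(\sphere^2)$ locally separates $\R^3$ into two sides, which gives room to push overlapping polyhedral patches slightly into one side or the other without altering the combinatorics of the triangulation. Proceeding inductively over simplices, each double curve can be eliminated by a small PL surgery whose displacement is bounded by the mesh of $T$. This is where the argument is genuinely hard and where pathologies like the Alexander horned sphere referenced in Section~\ref{sec:subtleties} must be confronted: the separation property of $\iota$ is essential, since simplicial approximation and general position by themselves cannot reliably produce an embedding.

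After finitely many such surgeries, one obtains a PL injection $g' : |T| \to \R^3$ still within $\epsilon$ of $\iota$. Setting $\mathcal{S}' = g'(|T|)$ and $f = g' \circ \iota^{-1}$, the map $g'$ is a continuous injection from the compact space $|T| \cong \sphere^2$ into the Hausdorff space $\R^3$ and is therefore a homeomorphism onto its image, yielding a PL embedded sphere $\mathcal{S}'$ and a homeomorphism $f : \mathcal{S} \to \mathcal{S}'$ with $d\paren{x, f\paren{x}} < \epsilon$ for all $x \in \mathcal{S}$.
\end{sproof}
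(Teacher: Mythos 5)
The paper offers no proof of this theorem; it is a black-box citation of Bing (1957), used principally as input to Corollary~\ref{cor:equivalence}. So there is nothing in the paper to compare your argument against, and I will assess the sketch on its own.

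The preliminary part is fine and standard: compactness and uniform continuity give a triangulation $T$ of $\sphere^2$ with arbitrarily small image mesh, perturbing the vertex images into general position and extending linearly yields a PL map $g$ with $\sup_t d(g(t),\iota(t)) < \epsilon$, and you correctly observe that $g$ is generically only an immersion with double curves. The problem is that the entire content of the theorem now lives in the one step ``each double curve can be eliminated by a small PL surgery,'' and you dispatch it by invoking Bing's side approximation technique. That move is not available: the Side Approximation Theorem is a \emph{later} (1963) and independently deep result of Bing that builds on, rather than precedes, the 1957 approximation theorem, so citing it here is circular. Nor does the sketch address why the proposed surgeries terminate, why they introduce no new double curves, why the final polyhedron is still a $2$-sphere rather than a higher-genus surface or a non-manifold, or why the cumulative displacement stays under $\epsilon$. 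These are exactly the points where wildness bites: at a point of the Alexander horned sphere the embedded sphere has no bicollar, so ``push the overlapping patch slightly to one side'' has no ready geometric meaning, and the two-sidedness coming from Jordan--Brouwer separation does not by itself supply the wiggle room you need. You have correctly located where the hard part is, but the sketch black-boxes it rather than proving it, so as written it does not constitute a proof.
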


\begin{theorem}[Alexander~\cite{alexander1924subdivision}]\label{theorem:alexandersimply}
    Let $\mathcal{S}$ be a piecewise--linear embedded two-sphere in the three-sphere $S^3=\R^3\cup\infty.$ Then both connected components of $S^3 \setminus \mathcal{S}$ are homeomorphic to three-dimensional balls.
\end{theorem}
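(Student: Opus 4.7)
\begin{sproof}
The plan is to use an innermost-disk argument with induction on combinatorial complexity. After a small PL isotopy I may assume $\mathcal{S}$ is in general position with respect to a linear height function $h:\R^3\to\R$, so that for a generic regular value $t$ the intersection of $\mathcal{S}$ with the level plane $H_t=h^{-1}(t)$ is a disjoint union of finitely many PL circles $c_1,\dots,c_k$ meeting $H_t$ transversally. Since $\mathcal{S}$ is a non-degenerate PL sphere I can always pick $t$ strictly between the minimum and maximum of $h$ on $\mathcal{S}$ so that $k\geq 1$.

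I would then select an \emph{innermost} circle $c_i$, meaning $c_i$ bounds a PL disk $D_i\subset H_t$ with $\mathring{D_i}\cap\mathcal{S}=\varnothing$, and observe that $c_i$ splits $\mathcal{S}$ into two PL disks $\Delta_+,\Delta_-$. Form the auxiliary PL spheres $\Sigma_\pm=\Delta_\pm\cup D_i$; after a slight perturbation of $D_i$ into each side of $H_t$, each $\Sigma_\pm$ has strictly fewer intersection circles with $H_t$ than $\mathcal{S}$ does. By the induction hypothesis each $\Sigma_\pm$ bounds a PL $3$-ball $B_\pm$, and gluing $B_+$ and $B_-$ across a collar of $D_i$ recovers a PL $3$-ball whose boundary is $\mathcal{S}$ on one side. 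The complementary component of $S^3\setminus\mathcal{S}$ is handled symmetrically, passing to $S^3=\R^3\cup\infty$ if needed.

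The main obstacle is setting up a lexicographic complexity measure --- for instance, (total number of intersection circles with $H_t$, number of $2$-simplices of $\mathcal{S}$) --- that is strictly decreased at every step, and verifying that each surgery produces a genuinely PL-embedded $2$-sphere rather than a merely immersed one, since the two sides $B_+$ and $B_-$ must match along $D_i$ without creating self-intersections. A secondary subtlety is the general-position input: one must check that the required PL perturbations can always be made small enough to preserve the ambient PL homeomorphism type of $\mathcal{S}$, so that the inductive conclusion for the simpler spheres $\Sigma_\pm$ transfers back to $\mathcal{S}$.
\end{sproof}
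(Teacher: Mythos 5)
The paper does not prove this statement; it is quoted as a classical theorem of Alexander, cited to~\cite{alexander1924subdivision}, and used as a black box in the sequel, so there is no internal argument to compare your sketch against. That said, your innermost-disk surgery is indeed the standard route to the three-dimensional PL Schoenflies theorem, and the moves you describe --- general position with respect to a height function, an innermost circle $c_i$ bounding a disk $D_i\subset H_t$ with interior disjoint from $\mathcal{S}$, splitting $\mathcal{S}$ into $\Delta_\pm$ along $c_i$, capping with slightly pushed copies of $D_i$, invoking an inductive hypothesis, and regluing the resulting balls along a collar of $D_i$ --- are the right ones.

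The genuine gap is in the well-foundedness of the induction. The lexicographic measure you propose --- first the number of intersection circles with the fixed level plane $H_t$, then the number of $2$-simplices --- bottoms out at a sphere disjoint from $H_t$, and that is simply an arbitrary PL sphere sitting inside a half-space, no simpler than the one you started with; your base case is the theorem itself. Re-choosing the level plane for $\Sigma_\pm$ at that point is not controlled by either coordinate: the circle count can jump back up, and the simplex count is not guaranteed to have dropped because the cap $D_i^\pm$ may contribute arbitrarily many new $2$-simplices. The classical argument instead inducts on the number of critical points (equivalently saddle points) of the piecewise--linear function $h|_{\mathcal{S}}$; the caps contribute only a local extremum and no saddles, so the saddles of $\mathcal{S}$ are partitioned between $\Sigma_+$ and $\Sigma_-$, and the substance of the proof goes into choosing the level $t$ relative to the critical values and choosing the innermost circle so that both pieces genuinely lose saddles (or handling separately the case where one piece is saddle-free). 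Your sketch identifies the right surgery but does not supply this argument, and the complexity count it does propose does not close the induction.
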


The following is a straightforward consequence of Theorem~\ref{theorem:bingapprox}.
\begin{corollary}\label{cor:equivalence}
    Let $E \subset \Z^3 \cup \paren{\Z^3}^*$ be a set of bonds and dual bonds. Then $E$ is separated by an embedded sphere if and only if it is separated by a piecewise--linear embedded sphere.
\end{corollary}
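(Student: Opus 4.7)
The forward direction is immediate: a piecewise--linear embedded sphere is in particular an embedded sphere. For the converse, suppose $\mathcal{S} \subset \R^3 \setminus E$ is an embedded sphere with bonds $e_1,e_2 \in E$ in the two distinct components of $\R^3 \setminus \mathcal{S}.$ The plan is to apply Bing's approximation theorem (Theorem~\ref{theorem:bingapprox}) with a sufficiently small $\epsilon > 0$ to produce a piecewise--linear sphere $\mathcal{S}'$ that inherits both separation properties of $\mathcal{S}$: (i) $\mathcal{S}' \subset \R^3 \setminus E,$ and (ii) $\mathcal{S}'$ still separates $e_1$ and $e_2.$

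Since $E$ is a locally finite union of compact bond segments, it is closed in $\R^3,$ and since $\mathcal{S}$ is compact and disjoint from $E,$ the distance $\delta \coloneqq d\paren{\mathcal{S},E}$ is strictly positive. Choosing $\epsilon < \delta$ in Bing's theorem yields a homeomorphism $f : \mathcal{S} \to \mathcal{S}'$ with $\sup_{x \in \mathcal{S}} d\paren{x,f(x)} < \epsilon,$ so $\mathcal{S}' \subset N_\epsilon\paren{\mathcal{S}} \subset \R^3 \setminus E,$ which handles (i).

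For (ii), pick representative points $p_i \in e_i,$ for which $d\paren{p_i,\mathcal{S}} \geq \delta > \epsilon.$ The straight-line homotopy $H(x,t) = (1-t)x + tf(x)$ between the inclusions of $\mathcal{S}$ and $\mathcal{S}'$ into $\R^3$ has image in $N_\epsilon\paren{\mathcal{S}},$ so it avoids both $p_1$ and $p_2.$ By homotopy invariance of the degree of the map $x \mapsto (x - p_i)/\abs{x - p_i}$ on the sphere --- equivalently, of the linking number of the embedded sphere with the point $p_i$ --- each $p_i$ lies in the bounded component of $\R^3 \setminus \mathcal{S}'$ if and only if it lies in the bounded component of $\R^3 \setminus \mathcal{S}.$ Hence $p_1$ and $p_2,$ and therefore the bonds $e_1$ and $e_2,$ remain in opposite components of $\R^3 \setminus \mathcal{S}'.$

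The only genuine subtlety is that $\mathcal{S}$ may be a wild (e.g., Alexander horned) embedded sphere, so the degree/linking-number invariance must be invoked in the topological category, using singular homology of $\R^3 \setminus \set{p_i},$ rather than in its smooth formulation. This is standard but is the one point worth care; everything else reduces to the choice of $\epsilon$ in Bing's theorem.
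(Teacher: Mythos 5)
Your argument is correct and follows essentially the same route as the paper: apply Bing's approximation theorem with $\epsilon$ smaller than $d(\mathcal{S},E)$ and observe that the induced partition of $E$ is unchanged. The one place you go further is in justifying that last observation: the paper simply asserts that $\mathcal{S}$, its closed $\epsilon$-neighborhood, and $\mathcal{S}'$ induce the same partition of $E$, while your straight-line homotopy plus homotopy invariance of degree (of the Gauss map $x \mapsto (x-p_i)/\lvert x - p_i\rvert$) gives an explicit and clean proof of this step that is valid in the purely topological category; that is a genuine improvement in rigor, though not a different approach.
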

\begin{proof}
Let $\mathcal{S}$ be an embedding of a sphere in $\R^3\setminus E.$  $\mathcal{S}$ is compact and $E$ is closed, so there exists $\epsilon>0$ so that the closed $\epsilon$-neighborhood $\mathcal{S}_{\epsilon}=\set{x\in\R^3:d\paren{x,\mathcal{S}}\leq \epsilon}$ is disjoint from $E$. Then, by Theorem~\ref{theorem:bingapprox} there is a piecewise--linear embedded sphere $\mathcal{S}'\subset \R^3$ and a homeomorphism $f:\mathcal{S}\to\mathcal{S}'$ so that $d\paren{x,f\paren{x}}<\epsilon$ for all $x\in \mathcal{S}.$ Observe that $\mathcal{S}, \mathcal{S}_{\epsilon},$ and $\mathcal{S}'$ all induce the same partition of $E.$ 
\end{proof}

As a result, we henceforth assume that all embedded spheres that we consider are piecewise--linear. 

We also will need the notion of transversality. Recall that manifolds $X,Y \subset \R^3$ intersect transversely if at any point $v \in X \cap Y,$ the union of the tangent spaces $T_v X$ and $T_v Y$ generates $\R^3.$ In the case of a piecewise--linear manifold, we define the tangent space of a point at the intersection of more than one linear piece to be the intersection of the tangent spaces of the pieces. It is well known that two piecewise--linear manifolds can be perturbed an arbitrarily small amount to intersect transversely, and that the resulting transverse intersection is also a manifold~\cite{armstrong1967transversality}. For example, if a piecewise--linear embedded sphere intersects the boundary of a box transversely, the intersection is a one manifold: a disjoint union of simple closed curves. 

Now we prove a finite volume duality statement relating the event that there is an entangled crossing in $B$ of the top and bottom of the box to the event that a nontrivial loop in the remaining sides is contractible in $P.$ However, we need to be careful here too about the correct notion of entangled crossing since a na\"{i}ve version does not have the correct duality properties.

\begin{figure}
	\includegraphics[height=0.4\textwidth]{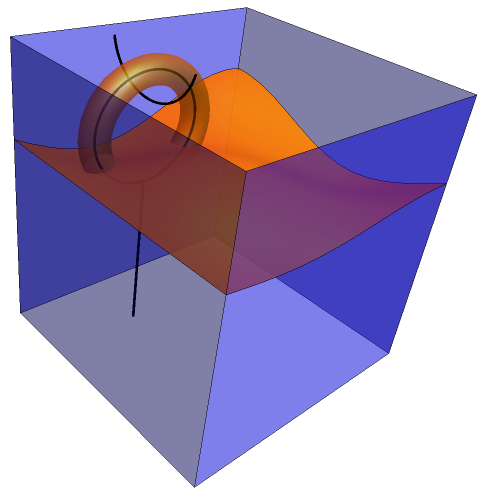}\qquad\qquad
	\includegraphics[height=0.4\textwidth]{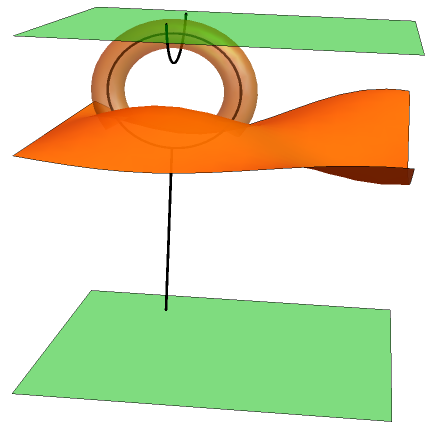}
    \caption{An illustration of the event in Proposition \ref{prop:duality}, shown from two different perspectives. $D_2$ is shown in blue on the left and $D_1$ in green on the right.  Observe that $\pi_1\paren{D_2}=\Z$ and that the generator of that fundamental group is not contractible in the union of $D_2$ and the orange surface due to the presence of a handle. The black paths are an entangled crossing of $D_1.$}
\label{fig:boxentanglement}
\end{figure}

More precisely, let $$R = [0,a]\times [0,b] \times [0,c]$$ for some $a,b,c \in \N,$ let 
\begin{equation}\label{eq:Rboundary}
\begin{aligned}
    &D_1^- = D_1^-(R) = \brac{0, a} \times [0,b] \times \set{1/2},\\
    &D_1^+ = D_1^+(R) = \brac{0, a} \times [0,b] \times \set{c-1/2},\\
    &D_1 = D_1(R) = D_1^- \cup D_1^+, \\
    &D_2 = D_2(R) = \partial R \cap \set{1/2 \leq z \leq c-1/2}.
\end{aligned}
\end{equation}
Let $P_R$ be the union of $P \cap R \cap \set{1 \leq z \leq c-1}$ and all plaquettes of $D_2,$ and let $B_R$ be the union of $B \cap \brac{1/2,a-1/2} \times \brac{1/2,b-1/2} \times \brac{1/2,c-1/2}$ and all of the bonds of $D_1.$ These sets should be thought of in terms of boundary conditions on the box; in the case of $P_R$ we are wiring the four faces of $D_2$ together and leaving the remaining two faces free and in the case of $B_R$ we do the opposite, with adjustments to fit the dual lattice instead of the primal one.

In order to obtain a dual condition for a disk crossing, we need to further restrict the possible separating spheres by extending a component of $D_1$ to a plane. We say that $B_R$ contains an entangled crossing if there is an entangled set of bonds $E \subset B_R$ that intersects both $D_1^-$ and $D_1^+.$

Given a subset $X \subset \R^3,$ we define $I\paren{X}$ to be the union of the bounded components of $\R^3 \setminus X.$

\begin{figure}
	\includegraphics[height=0.4\textwidth]{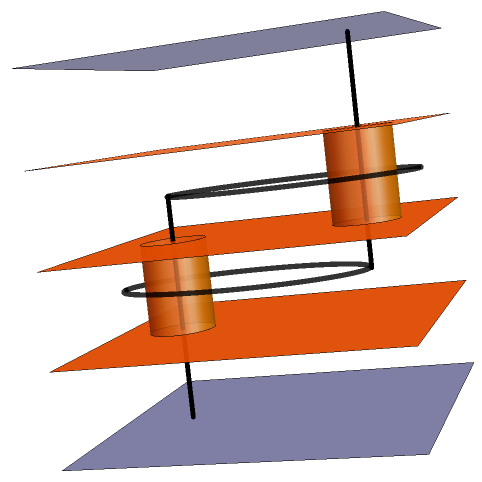}\qquad\qquad
    \includegraphics[height=0.4\textwidth]{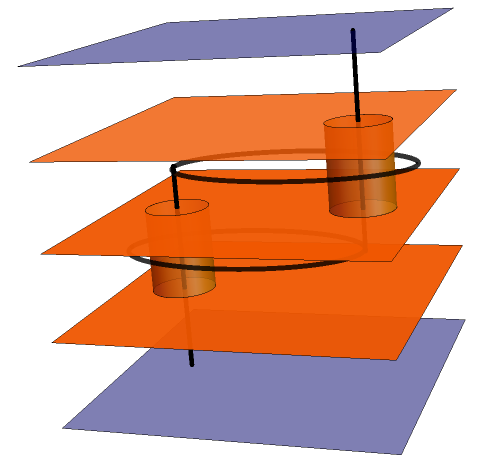}
    \caption{An example inspired by Bing's ``House with Two Rooms''\cite{bing1964some}, shown from two different perspectives. Here, $P_R$ is depicted in orange, $D_1$ is illustrated in blue, and a system of bonds homotopy equivalent to $B_R$ is drawn in black. $P_R$ is formed from three parallel rectangles by removing a disk from the top and bottom rectangles and two disks from the middle rectangle and connecting the middle rectangle to the other two by cylinders. It is easy to see that $D_1$ is separated by a sphere in the complement of $B_R,$ but it is more difficult to visualize a disk inside of $R\setminus B_R$ separating $D_1.$ If $\alpha_1, \alpha_2,$ and $\alpha_3$ are the three rectangular loops in $D_2\cap P_R,$ ordered from top to bottom, then $\brac{\alpha_2}=\brac{\alpha_1}\ast\brac{\alpha_3}$ in $\pi_1\paren{P_R},$ but $\brac{\alpha_1}=\brac{\alpha_2}=\brac{\alpha_3}$ in $\pi_1\paren{D_2}.$ It follows that $\brac{\alpha_1}=e$ in  $\pi_1\paren{P_R\cup D_2}.$ Then, Dehn's lemma implies that $\alpha_1$ is the boundary of a disk in a small neighborhood of $D_2\cup P_R,$ which can then be pushed inside of $R\setminus B_R.$ This example will also be relevant in Section~\ref{sec:genus}.}
\label{fig:tworooms}
\end{figure}

The following proposition is illustrated in Figures~\ref{fig:boxentanglement} and~\ref{fig:tworooms}, and is essentially a consequence of Theorem~\ref{lemma:dehn}, adapted to our setting. In order to define one of the equivalent conditions, recall that an inclusion of spaces $X \hookrightarrow Y$ induces a map on fundamental groups $\pi_1\paren{X} \to \pi_1\paren{Y}$ that maps an equivalence class of loops to the equivalence class of the image under the inclusion of one of its representatives.

\begin{prop}\label{prop:duality}
The following are equivalent:
\begin{enumerate}
    \item There is no piecewise--linear embedded disk in $R \setminus B_R$ that separates $D_1.$
    \item The map $\pi_1(D_2) \mapsto \pi_1(P_R)$ induced by inclusion is nontrivial.
    \item There is no piecewise--linear embedded sphere in $\R^3\setminus B_R$ that separates $D_1.$ 
    \item $B_R$ contains an entangled crossing of $D_1.$
\end{enumerate}
In addition, we show that if this event does not occur then there is a surface of plaquettes separating the two components of $D_1$ that is the continuous image of a disk. 
\end{prop}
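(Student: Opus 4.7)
The plan is to establish the cycle $(4)\Rightarrow(3)\Rightarrow(1)\Rightarrow(2)\Rightarrow(4)$ and then to derive the final assertion from the deformation retraction $R\setminus B_R\to P_R$ (the box analogue of $\R^3\setminus B\to P$). The implication $(4)\Rightarrow(3)$ is immediate: if $E\subset B_R$ is entangled with bonds in both $D_1^-$ and $D_1^+$, any PL sphere in $\R^3\setminus B_R$ separating $D_1$ would lie in $\R^3\setminus E$ and separate $E$, contradicting entanglement.

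For $(3)\Rightarrow(1)$, I would argue contrapositively. Given a PL embedded disk $\mathcal{D}\subset R\setminus B_R$ separating $D_1$, its boundary $\partial\mathcal{D}$ lies in $D_2\subset\partial R$ and separates the top and bottom halves of $\partial R$. Since $\R^3\setminus R$ is disjoint from $B_R$, I may cap $\partial\mathcal{D}$ off by a PL disk $\mathcal{D}'\subset\R^3\setminus R$ to form a PL sphere $\mathcal{D}\cup\mathcal{D}'\subset\R^3\setminus B_R$ still separating $D_1$. For the contrapositive of $(1)\Rightarrow(2)$, triviality of $\pi_1(D_2)\to\pi_1(P_R)$ makes the generator $\alpha$ of $\pi_1(D_2)\cong\Z$ (a PL loop on $\partial R$ separating $D_1^-$ from $D_1^+$) null-homotopic in $P_R\subset R\setminus B_R$, and Dehn's Lemma~\ref{lemma:dehn} upgrades the resulting continuous disk to a PL embedded disk in $R\setminus B_R$ with boundary $\alpha$; this disk separates $D_1$ because $R$ is a ball and $\alpha$ separates the top and bottom of $\partial R$, so the two complementary components of $R\setminus \mathcal{D}$ contain $D_1^-$ and $D_1^+$ respectively.

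The main obstacle is $(2)\Rightarrow(4)$, for which I would argue contrapositively: assuming $B_R$ has no entangled crossing, I construct a PL sphere in $\R^3\setminus B_R$ separating $D_1$ and then run the already-proved $(3)\Rightarrow(1)\Rightarrow(2)$ to obtain triviality of the $\pi_1$ map. Let $C$ be the union of all entangled subsets of $B_R$ containing $D_1^-$; since the union of entangled sets sharing a bond is entangled, $C$ itself is entangled, and by hypothesis $C\cap D_1^+=\varnothing$. For each $b\in D_1^+$ the set $C\cup\{b\}$ is not entangled, so some PL sphere separates $C$ from $b$. Amalgamating these spheres in the spirit of the entangled-path / enclosing-sphere duality of~\cite{grimmett2010plaquettes}, and using Corollary~\ref{cor:equivalence} to remain in the PL category throughout, produces a single PL sphere in $\R^3\setminus B_R$ enclosing $C$ (and thus $D_1^-$) while excluding $D_1^+$. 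Finally, if none of (1)--(4) holds, (1) supplies a PL embedded disk $\mathcal{D}\subset R\setminus B_R$ separating $D_1$, and its image under the deformation retraction $R\setminus B_R\to P_R$ is a continuous map of a disk into $P_R$ whose image is a plaquette surface still separating the two components of $D_1$, as required.
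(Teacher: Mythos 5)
Your cycle $(4)\Rightarrow(3)\Rightarrow(1)\Rightarrow(2)\Rightarrow(4)$ does not actually close, and the place it fails is precisely where the substance of the proposition lies. In the step $(2)\Rightarrow(4)$ you argue contrapositively: from $\lnot(4)$ you build a separating sphere (so $\lnot(3)$), and then you invoke the ``already-proved $(3)\Rightarrow(1)\Rightarrow(2)$'' to conclude $\lnot(2)$. But $(3)\Rightarrow(1)\Rightarrow(2)$ is the implication \emph{no sphere $\Rightarrow$ no disk $\Rightarrow$ nontrivial map}; its contrapositive is \emph{trivial map $\Rightarrow$ disk exists $\Rightarrow$ sphere exists}. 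Neither of these lets you pass from ``a separating sphere exists'' to ``the map is trivial.'' What you need at that point is exactly $(2)\Rightarrow(3)$, equivalently $\lnot(3)\Rightarrow\lnot(2)$, i.e.\ that a PL sphere in $\R^3\setminus B_R$ separating $D_1$ forces $\pi_1(D_2)\to\pi_1(P_R)$ to be trivial --- and this is the hard implication your outline never proves. With what you have written you establish $(3)\Leftrightarrow(4)$ and $(3)\Rightarrow(1)\Rightarrow(2)$, but nothing rules out $(1)$ and $(2)$ being strictly weaker than $(3)$ and $(4)$.

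The missing implication is genuinely delicate because the separating sphere of $(3)$ lives in all of $\R^3\setminus B_R$ and may weave in and out of the box $R$ in complicated ways; one cannot simply intersect it with $R$ to get a disk. The paper's argument surgers the sphere along $D_2$ to remove loops that are trivial in $\pi_1(D_2)$, and then splits into two cases. If some component of $\mathcal{S}\cap R$ has three or more boundary circles, a ``pair of pants'' relation in $\pi_1$ together with the facts that subsets of $\R^3$ have torsion-free $\pi_1$ and that the Klein bottle does not embed in $\R^3$ forces the generator of $\pi_1(D_2)$ to die in $\pi_1(P_R)$. If every component is a disk or a cylinder, a combinatorial gluing argument along $\partial R$ produces a disk entirely inside $R\setminus B_R$ separating $D_1$. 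None of this is implied by, or reducible to, the implications you did prove; you would need to supply an argument of comparable content to make $(2)\Rightarrow(4)$ (or, more directly, $(2)\Rightarrow(3)$) go through. The remaining pieces of your outline --- $(4)\Rightarrow(3)$, $(3)\Rightarrow(1)$ by capping off, $(1)\Rightarrow(2)$ via Dehn's Lemma, the sphere-amalgamation for the implication $(3)\Rightarrow(4)$, and the final remark via the deformation retraction --- are in line with the paper, though the amalgamation step would benefit from an explicit appeal to the paper's separating-sphere lemmas rather than a gesture at~\cite{grimmett2010plaquettes}.
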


\begin{proof}
The fact that $(1)$ and $(2)$ are equivalent is an immediate consequence of Theorem~\ref{lemma:dehn} and the fact that $R \setminus B_R$ deformation retracts onto $P_R$ with the small technical caveat that the disk found this way may need to be deformed slightly in order to ensure that it is within $R.$ In addition, $(3)$ and $(4)$ are equivalent by Corollary~\ref{cor:equivalence}.

We now prove that $(2)$ and $(3)$ are equivalent.

$(2) \implies (3):$\\
Suppose that there is a piecewise--linear embedded sphere $\mathcal{S}$ in $\R^3\setminus B_R$ such that $D_1^-$ and $D_1^+$ are contained in the bounded and unbounded components respectively of $\R^3 \setminus \mathcal{S}.$ 
Since $B_R \cap \set{z \in \set{0,c}} = \emptyset,$ we may assume that $\mathcal{S} \cap \partial R \subset D_2.$ Since $B_R$ is closed, we may  also assume that $\mathcal{S}$ intersects $D_2$ transversely, so $\mathcal{S} \cap D_2$ is a finite union of simple loops. We modify $\mathcal{S}$ to remove loops that are trivial in $\pi_1\paren{D_2}.$ If $\mathcal{S} \cap \partial R$ contains a loop that is contractible in $D_2$ then it contains at least one such loop $\gamma$ that bounds a disk $\mathcal{D}_{\gamma}$ in $D_2.$ Then the union of $\mathcal{D}_{\gamma}$ and one of the components of $\mathcal{S} \setminus \gamma$ is a sphere that separates $D_1^-$ and $D_1^+.$ Replace $\mathcal{S}$ by this sphere and push $\mathcal{D}_{\gamma}$ into $R$ to reduce the number of components of $\partial R\cap \mathcal{S}$ by one. By applying this process to each contractible loop, we may assume without loss of generality that no loop of $\mathcal{S} \cap D_2$ is contractible in $D_2.$ 

Since $\mathcal{S} \cap \partial R$ is a union of disjoint loops, $\mathcal{S} \cap R$ is a disjoint union of surfaces with boundary. We first consider the case in which there is such a surface $\mathcal{T}$ with more than two boundary components, such as the example illustrated in Figure~\ref{fig:tworooms}. Let $\beta_1,\ldots,\beta_k$ be the boundary components of $\mathcal{T}.$ Orient them so that they are all homotopic in $\pi_1\paren{D_2}$ to a generator $\alpha$ of that group. Since $\mathcal{T}$ is homeomorphic to a two-sphere with $k$ disks removed, we have a relation
\begin{equation}\label{eq:pantssum}
    \brac{\beta_1}^{a_1}\ast \ldots\ast \brac{\beta_k}^{a_k}=e\in \pi_1\paren{\mathcal{T}}
\end{equation}
where $a_1,\ldots,a_k\in\set{-1,1}.$ In the next paragraph, we show that $a_1,\ldots,a_k$ must all be $1$ or $-1.$  As such, under the inclusion $i : \pi_1\paren{D_2} \to \pi_1\paren{D_2 \cup \mathcal{T}}$ we have that 
\[i\paren{\brac{\alpha}} = i\paren{\brac{\alpha}}^{k-1}\]
so either $\brac{\alpha}=e$ in $\pi_1\paren{D_2\cup \mathcal{T}}$ (and we are done) or $\brac{\alpha}$ is an element of finite order in $\pi_1\paren{D_2\cup \mathcal{T}}.$ However, $D_2 \cup \mathcal{T}$ is a locally path--connected and semilocally simply connected subset of $\R^3,$ so it cannot have a nontrivial element of finite order~\cite{4034711}.

We return to check that the boundary loops of $\mathcal{T}$ are oriented in the same direction. Order the boundary components $\beta_j$ by height in $D_2.$ Take inverses in~(\ref{eq:pantssum}) if necessary so that $a_1=1$ and let $j$ be the smallest index so that $a_{j+1}=-1.$ $\beta_j$ and $\beta_{j+1}$ bound a cylinder $\mathcal{C}_1$ in $D_2$ which does not intersect any other components of $\partial \mathcal{T}.$ Let $\mathcal{C}_2$ be the cylinder obtained by attaching disjoint disks outside of $R$ to the remaining $k-2$ loops in the boundary of $\mathcal{T}$  (which is possible because $\beta_j$ and $\beta_{j+1}$ are adjacent). We have that $\brac{\beta_j}=\brac{\beta_{j+1}}$ in $\pi_1\paren{\mathcal{C}_1}$ but $\brac{\beta_j}=\brac{\beta_{j+1}}^{-1}$ in $\pi_1\paren{\mathcal{C}_2}.$ Thus $\brac{\beta_{j+1}}$ represents an element of order two in the closed surface that is the union of $\mathcal{C}_1$ and $\mathcal{C}_2,$ so that surface is a Klein bottle. This is a contradiction, as the Klein bottle does not embed in $\R^3.$ 

We now turn to the case in which all components of $\mathcal{S} \cap R$ have at most two boundary components, meaning that each is either a cylinder or a disk. Since $B_R$ does not contain any bonds above $D_1^+$ or below $D_1^-$ we may assume that $\mathcal{S}$ does not intersect $R\cap\set{z\geq c-1/2}$ or $R\cap\set{z\leq 1/2}.$ By construction, $\partial R\setminus \mathcal{S}$ is a disjoint union of finitely many cylinders and two disks, one containing the top face of $R$ and one containing the bottom face of $R.$  Since $\mathcal{S}$ contains $D_1^-$ in its interior,  $I\paren{\mathcal{S}}\cap \partial R$ must include the bottom disk but not the top disk.
Let 
$$
\mathcal{S}'' \coloneqq \partial \paren{I\paren{\mathcal{S}}\cap R}\,.
$$
Then by construction, $\mathcal{S}''$ is a (not necessarily connected) closed oriented surface obtained by gluing together cylinders and disks each contained in either $I\paren{\mathcal{S}}\cap \partial R$  or $\mathcal{S}\cap I\paren{\partial R}$ along their boundaries in $\mathcal{S}\cap \partial R.$ Let $\mathcal{S}' \subset \mathcal{S}''$ be the component of $\mathcal{S}''$ containing the bottom disk of $\partial R\setminus \mathcal{S}$ so $\mathcal{S}'$ contains $D_1^-$ in its interior. Since attaching a cylinder to a disk creates another disk, to create a closed surface there must eventually be another disk $D\subset R\setminus B_R$ attached, making $\mathcal{S}'$ a sphere. Since the top disk of $\partial R\setminus \mathcal{S}$ is not included in $I\paren{\mathcal{S}}\cap \partial R,$ $D$ must be contained in $\mathcal{S}\cap I\paren{\partial R}.$ Thus $D$ is a disk contained inside $R\setminus B_R$ separating $D_1.$

$(3) \implies (2):$\\
Suppose the inclusion $\pi_1(D_2) \mapsto \pi_1(D_2 \cup P_R)$ is trivial. Then the loop $\gamma_1 \coloneqq \partial R \cap \set{z = 1/2}$ is contractible in $D_2 \cup P_R.$ Let $\mathbb{D}^2$ denote the 2-dimensional disk. Then the contraction of $\gamma_1$ induces a map $i_1 : \mathbb{D}^2 \to P_R$ such that $i_1\paren{\partial \mathbb{D}^2} =  \gamma_1.$ By Theorem~\ref{lemma:dehn} and the fact that there is an open neighborhood of $P_R$ in $R \setminus B_R,$ there is then an embedding $i_1' : \mathcal{D}^2 \to R \setminus B_R$ such that $i_1'\paren{\partial \mathbb{D}^2} =  \gamma_1.$ Of course we can also find an embedding $i_2 : \mathbb{D}^2 \to \set{z \leq 1/2} \setminus R$ such that $i_2\paren{\partial \mathbb{D}^2} = \gamma_1.$ Together these give us an embedding $j : \mathbb{S}^2 \to \mathbb{R}^3,$ whose image separates $D_1$ as desired.
\end{proof}

\subsection{Separating Spheres}\label{sec:sepspheres}

We now work towards a characterization of the event $U_{\gamma}$ in terms of the dual bond system. Recall that the entangled component of a vertex $v$ in a graph $G$ is its connected component after adding long-range edges between vertices in the same entangled set, or equivalently the union of the vertices in entangled sets containing $v.$ Note that this is consistent with the wired notion of entanglement as discussed in~\cite{grimmett2000entanglement}, since we are allowing infinite sets of bonds. It will be useful to have spheres that enclose some entangled components but not others. The existence of such spheres does not follow immediately from the definition of entanglement, but can be proven as a straightforward lemma. We begin with the finite case.  

\begin{lemma}\label{lemma:components}
Let $E$ and $F$ be finite unions of finite entangled components in $E \cup F$ so that $E\cap F=\varnothing.$ Then there exists a sphere $\mathcal{S}$ separating $E$ and $F$ such that $E$ is contained in the bounded component of $\mathbb{R}^3 \setminus \mathcal{S}.$
\end{lemma}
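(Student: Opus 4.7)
The plan is to build the desired sphere by enclosing each entangled component individually and then gluing the enclosures together. The key technical ingredient is the following auxiliary claim, which I would prove by induction on $k$: for any finite disjoint union $G = G_1 \sqcup \cdots \sqcup G_k$ of finite entangled components and any $i$, there is a PL sphere $\mathcal{T}_i$ with $G_i$ in its bounded component and $G \setminus G_i$ in its unbounded component. The base $k = 1$ takes a large PL ball. For $k \ge 2$, since $G$ is not entangled, Corollary~\ref{cor:equivalence} supplies a PL separating sphere $\mathcal{S}_0$ that partitions the $G_j$'s into two nonempty groups (as entangled sets cannot be split by a sphere). Applying the induction on the smaller side containing $G_i$, and passing to the one-point compactification $S^3 = \R^3 \cup \{\infty\}$ so that both sides of $\mathcal{S}_0$ become PL $3$-balls by Alexander's Theorem~\ref{theorem:alexandersimply}, produces the required enclosing sphere confined to that side of $\mathcal{S}_0$ and hence automatically disjoint from the components on the other side.

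To deduce the main lemma, I would induct on $|F|$, the number of entangled components in $F$. The base case $|F| = 0$ just takes a large PL ball containing the finite set $E$. For the inductive step, pick a component $G_j \subset F$; the inductive hypothesis applied to $E$ and $F \setminus G_j$ gives a PL sphere $\mathcal{S}_1$ with $E$ bounded inside and $F \setminus G_j$ outside. If $G_j$ is already outside $\mathcal{S}_1$ we are done; otherwise the auxiliary claim, applied inside the PL $3$-ball bounded by $\mathcal{S}_1$, produces a PL sphere $\mathcal{T}_j$ inside $\mathcal{S}_1$ enclosing $G_j$ and no other components. I then perform a connected sum: remove small open disks from $\mathcal{S}_1$ and $\mathcal{T}_j$ and join their boundary circles by a thin PL tube routed through the annular region between them, chosen to avoid $E$ (which can be done since $E$ is a $1$-complex and therefore cannot disconnect a $3$-dimensional region, by a transversality argument as in Section~\ref{sec:subtleties}). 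The resulting single PL sphere bounds the interior of $\mathcal{S}_1$ minus the interior of $\mathcal{T}_j$ and the tube, a region which contains $E$ and excludes $G_j$ and $F \setminus G_j$, completing the inductive step.

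The main obstacle is establishing the auxiliary claim in a form flexible enough to support the connected-sum step, that is, producing an enclosing sphere confined to a prescribed PL $3$-ball while still correctly oriented (with $G_i$ on the specified side). I expect this to follow from the topological tools of Section~\ref{sec:subtleties}: Corollary~\ref{cor:equivalence} for the existence of piecewise-linear separating spheres, Alexander's Theorem~\ref{theorem:alexandersimply} for decomposing $S^3$ across a PL sphere into two PL balls, and standard PL regular-neighborhood arguments that turn a contractible spine built from a finite $1$-complex (by filling its cycles with disks in a simply connected ambient region) into a ball-shaped enclosure.
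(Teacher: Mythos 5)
Your proposal is correct and is built from the same two primitives as the paper's argument: (i) repeated use of the fact that a non-entangled finite set admits a PL separating sphere, applied in nested fashion to whittle down to a single entangled component, and (ii) tube (connected-sum) surgeries to assemble the final sphere. Your auxiliary claim is essentially the paper's inner step ("a sphere isolating one component $A$ from the rest, found by nested separating spheres"), and you correctly note that the nesting can be forced by the $\R^3$-homeomorphism of the interior of a PL ball. Where you diverge is the outer bookkeeping: the paper surrounds each entangled component of $E$ by such a sphere and then \emph{joins} those enclosures by tubes to produce a single sphere around $E$, whereas you start with a large ball around everything and iteratively \emph{excise} the components of $F$ one at a time, inducting on $|F|$ and performing a connected sum at each step. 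Both assemblies work; yours has the advantage of a clean induction with only one tube per step (and the tube demonstrably avoids the $1$-complex $E$ by general position in a $3$-dimensional region), while the paper's has the advantage that orientation never becomes an issue because it dispatches it once at the outset with an explicit "flip" observation --- starting from any sphere with $G$ inside and $H$ outside, tube it to a large sphere containing both to produce a sphere with $H$ inside and $G$ outside. That flip trick is exactly the device you are reaching for in your last paragraph: rather than tracking which side of $\mathcal{S}_0$ is bounded and appealing to $S^3$ and regular neighborhoods, you can simply state the auxiliary claim as "there exists a PL sphere separating $G_i$ from $G\setminus G_i$" without an orientation clause, and then, wherever the orientation is wrong, apply the flip at the cost of one more tube routed (again by general position) off the finite $1$-complex to be avoided. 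With that adjustment your argument closes without further topological machinery.
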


\begin{proof}
First, suppose that we have two finite sets of bonds $G$ and $H$ separated by a sphere $\mathcal{S}_G$ with $G$ in the bounded component of its complement. Then we can construct a sphere $\mathcal{S}_H$ separating $G$ and $H$ with $H$ in its bounded component in the following way. Let $\mathcal{S}_{(G,H)}$ be a large sphere containing both $\mathcal{S}_G$ and $H$. Let $\xi$ be a smooth path connecting a point $x_G \in \mathcal{S}_G$ with a point $x_{(G,H)} \in \mathcal{S}_{(G,H)}$ that does not intersect $H.$ Since $H$ is closed, we may thicken $\xi$ into a tube connecting disks about $x_G$ and $x_{(G,H)}$ that also does not intersect $H.$ Removing the disks from the spheres and attaching them with the tube creates the desired embedded sphere.

We now consider the general case.  It is enough to find a sphere separating a single entangled component $A$ of $E$ from the rest because we can connect spheres containing the desired components by tubes as above. If $E$ is not entangled, there is a sphere $\mathcal{S}_1$ that separates $E$ into $E_1$ and $E_1'.$ Suppose without loss of generality that $A \subset E_1.$ By the above argument we can assume that $E_1$ is in the bounded component of $\R^3 \setminus \mathcal{S}_1.$ If $E_1 = A,$ then we are done. Otherwise, $E_1$ is not entangled, so there is a sphere $\mathcal{S}_2$ that separates it into $E_2$ and $E_2'.$ A priori $\mathcal{S}_2$ may intersect $E_1'.$ However, $I\paren{\mathcal{S}_1}$ is homeomorphic to $\R^3$, so we may assume that $\mathcal{S}_2\subset I\paren{\mathcal{S}_1}.$ Since $E$ is a finite union of components, repeated application of this process will terminate, giving us the desired sphere separating $A$ from $E \setminus A.$
\end{proof}

In fact, the previous lemma can be improved to remove the requirement that both sets be finite.

\begin{lemma}\label{lemma:separation}
Let $E$ a finite union of finite entangled components of $B.$ Then there is a sphere of $\mathbb{R}^3$ that separates $E$ and $B \setminus E.$
\end{lemma}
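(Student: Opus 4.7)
\begin{sproof}
The plan is to reduce to the finite setting handled by Lemma~\ref{lemma:components} by restricting attention to bonds inside a bounded region, while ensuring the resulting separating sphere stays inside that region (and therefore automatically avoids the rest of $B$). Since $E$ is a finite set of bonds, fix $R$ large enough that $E \subset \ball(0,R)$, and choose $R' \in (R, R+1)$ so that the round sphere $\partial \ball(0,R')$ is disjoint from $B$, which is possible since $B$ is countable. Let $B' = \set{b \in B : b \subset \ball(0,R')}$. Then $B'$ is a finite set of bonds containing $E$, and every bond of $B \setminus B'$ is disjoint from $\overline{\ball(0,R')}$.

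Next I would verify that $E$ is still a union of entangled components of the restricted bond set $B'$. Entanglement of a bond set depends only on the set itself, so each component of $E$ remains entangled inside $B'$. Moreover, since any entangled subset of $B'$ is also an entangled subset of $B$, the maximality of each component of $E$ as an entangled subset of $B$ is preserved when passing to $B'$. Lemma~\ref{lemma:components} then applies to $E$ and $B' \setminus E$ viewed as subsets of $B'$, and produces a piecewise--linear embedded sphere $\mathcal{S}$ separating them, with $E$ in the bounded component.

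The main remaining point is to ensure that $\mathcal{S}$ can be chosen inside $\ball(0,R')$, so that it avoids $B \setminus B'$ as well. The construction in the proof of Lemma~\ref{lemma:components} already uses the device of confining subsequent spheres to the interior of earlier ones via the fact that an open $3$-ball is homeomorphic to $\R^3$; applied once at the start, the same device confines the entire recursion (including the auxiliary ``huge'' spheres $\mathcal{S}_{(G,H)}$ and the tubes used to switch interior and exterior) to any fixed bounded region homeomorphic to $\R^3$ that contains $B'$. Taking this region to be $\ball(0,R')$ yields a sphere $\mathcal{S} \subset \ball(0,R') \setminus B'$. Since bonds of $B \setminus B'$ are disjoint from $\overline{\ball(0,R')}$, the sphere $\mathcal{S}$ is disjoint from all of $B$ and separates $E$ from $B \setminus E$ as required. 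The hard part is this last localization step, which amounts to observing that every separating sphere or tube in the proof of Lemma~\ref{lemma:components} can be chosen in a prescribed bounded region containing the bonds being manipulated.
\end{sproof}
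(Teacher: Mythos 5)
The proposal has a genuine gap at its first step. You assert that we can ``choose $R' \in (R, R+1)$ so that the round sphere $\partial \ball(0,R')$ is disjoint from $B$, which is possible since $B$ is countable.'' Countability of the set of bonds does not help here: each bond is a unit segment, and the set of radii hit by a single bond is a closed interval whose length can be close to $1$ (for roughly radial bonds). There are order $R^2$ bonds whose radii meet $(R,R+1)$, so for a supercritical (or even any positive-density) configuration $B$, the union of these intervals will typically cover all of $(R,R+1)$, and no round sphere of radius in $(R,R+1)$ misses $B$. Increasing $R$ only makes this worse. In other words, the existence of \emph{any} sphere in $\R^3\setminus B$ whose interior contains $E$ is essentially the content of the lemma; it cannot be assumed at the outset by picking a nice geometric sphere.

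The paper instead proves exactly this missing ingredient directly: it shows (giving a short proof of Proposition~2.4 of~\cite{grimmett2000entanglement}) that if the entangled component of a vertex is not enclosed by any sphere of $\R^3\setminus B$, then for every $n$ there is a subgraph $E_n \subset Q_n\cap B$ through the origin with $E_n \cup T_n$ entangled in $Q_n\cap B$, and a diagonal/compactness argument then produces an infinite entangled component, contradicting finiteness of $E$. With such a sphere $\mathcal{S}$ in hand, the localization you describe is carried out cleanly by taking a homeomorphism $f: I(\mathcal{S}) \to \R^3$ and applying Lemma~\ref{lemma:components} to the finite graph $f(B\cap I(\mathcal{S}))$; this is the same device you invoke at the end, but it only works once one already has a $B$-avoiding enclosing sphere. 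The rest of your outline (that $E$ remains a union of entangled components after restricting to a bounded piece of $B$, and that tubes glue finitely many components together) is fine, but the argument cannot get off the ground without the compactness step.
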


\begin{proof}
As in Lemma~\ref{lemma:components}, spheres containing finite components can be joined by tubes to create a single sphere, so it is enough to consider the case where $E$ is a single entangled component. For convenience shift the dual bonds to $\mathbb{Z}^3$ and assume that $0 \in E.$ Let $Q_n = \{(x,y,z) \in \mathbb{R}^3 : -n \leq x,y,z \leq n\}.$ 

We claim that it suffices to show that if $E$ is finite then there is a sphere $\mathcal{S}$ of $\R^3\setminus B$ containing $E$ in its interior. Assuming this, there is a homeomorphism $f$ from the interior $I\paren{\mathcal{S}}$ to $\R^3$; applying Lemma~\ref{lemma:components} to the finite graph $f\paren{B\cap I\paren{\mathcal{S}}}$ yields a sphere $\mathcal{S}'\subset I\paren{\mathcal{S}}\setminus B$ containing $E$ in its interior and all vertices of $I\paren{\mathcal{S}}\setminus E$ in its exterior.

We now show that if $E$ is finite then there is a sphere $\mathcal{S}$ of $\R^3\setminus B$ containing $E$ in its interior. This is Proposition 2.4 in~\cite{grimmett2000entanglement}, for which we give a shorter proof. Assume that the entangled component $E$ of the origin is not contained in the interior of any sphere of $\R^3\setminus B.$ We will show that $E$ is infinite. Let $T_n$ be the set of bonds in the boundary of the cube $Q_n=\brac{-n,n}^3.$ For any for any $n>0$ there must be a graph $E_n\subset Q_n\cap B$ containing the origin so that $E_n\cup T_n$ is entangled in $Q_n \cap B$. We will show the contrapositive. Suppose there is a sphere $\mathcal{S}\subset \R^3\setminus Q_n\cap B$ separating $T_n$ from $0.$ $\mathcal{S} \cap \partial Q_n$ is a union of loops, each within a unit square of bonds of $T_n.$ We may iteratively fill these loops with disks, pushing the resulting spheres locally into or out of $Q_n.$ This yields a collection of spheres disjoint from $\partial Q_n,$ one of which must separate $0$ from $T_n.$ That sphere must be contained inside $Q_n$ and thus must be disjoint from $B$ and contain $0$ in its interior. 

By the same reasoning, for $m>n,$ $\paren{E_m\cap Q_n} \cup T_n$ is entangled in $Q_n \cap B$. There are only finitely many subgraphs of $Q_n$ so one $\hat{E}_n$ must occur as $E_m\cap Q_n$ for infinitely many $m$ As such, there is a collection of graphs $\hat{E}_{1}\subset \hat{E}_2\ldots$ with that property. Then $\cup_{n}\hat{E}_n$ is an infinite entangled graph containing the origin. 
\end{proof}

The same proof works in a slightly more general setting, which we state separately for the upcoming proof and for later convenience.

\begin{corollary}\label{cor:dualseparation}
    Let $F \subset \Z^3 \cup \paren{\Z^3}^*$ be a set of bonds and dual bonds and let $E \subset F$ be a finite union of finite entangled components of $F.$ Then there is a sphere of $\mathbb{R}^3$ that separates $E$ and $F \setminus E.$
\end{corollary}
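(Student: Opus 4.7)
The plan is to observe that the proof of Lemma~\ref{lemma:separation} is essentially topological: entanglement is defined by separation via embedded spheres in $\R^3$, and the argument never uses the fact that the underlying graph sits in $\Z^3$ rather than in the union $\Z^3 \cup (\Z^3)^*$. So I would simply check line by line that each step generalizes.

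First, as in Lemma~\ref{lemma:separation}, I would reduce to the case where $E$ is a single entangled component of $F$, since spheres containing disjoint finite pieces can be joined by thin tubes disjoint from the remaining (closed) bonds and dual bonds, by the tube construction from the first paragraph of the proof of Lemma~\ref{lemma:components}. Note that Lemma~\ref{lemma:components} itself already applies verbatim to the mixed setting: its proof only manipulates spheres in $\R^3$ relative to finite sets of edges, and it does not matter whether these edges are bonds, dual bonds, or a mixture.

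Next, I would prove the analogue of the intermediate claim: if $E$ is finite then there is a sphere in $\R^3 \setminus F$ containing $E$ in its interior. I would run the same contrapositive argument on the cubes $Q_n = [-n,n]^3$. Suppose no such sphere exists. Then for each $n$ there is a finite subgraph $E_n \subset F \cap Q_n$ containing a fixed vertex of $E$ so that $E_n \cup T_n$ is entangled in $F \cap Q_n$, where $T_n$ is the set of (primal or dual) edges on $\partial Q_n$; the justification is identical to before, since the argument for filling loops of $\mathcal{S} \cap \partial Q_n$ by small disks pushed off of $\partial Q_n$ is purely about embedded spheres in $\R^3$ and treats all edges of $F$ uniformly. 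Once this is established, the diagonal/compactness extraction producing an increasing sequence $\hat{E}_1 \subset \hat{E}_2 \subset \cdots$ whose union is an infinite entangled subgraph of $F$ containing the chosen vertex goes through unchanged, contradicting the finiteness of the entangled component $E$.

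Finally, given the sphere $\mathcal{S} \subset \R^3 \setminus F$ containing the finite $E$ in its interior, I would conclude by applying Lemma~\ref{lemma:components} to the finite graph obtained by pushing $F \cap I(\mathcal{S})$ through a homeomorphism $I(\mathcal{S}) \to \R^3$, separating $E$ from $(F \cap I(\mathcal{S})) \setminus E$; pulling back gives a sphere in $I(\mathcal{S})$ separating $E$ from all of $F \setminus E$ (the vertices outside $\mathcal{S}$ are already separated from $E$ by $\mathcal{S}$). I do not expect any real obstacle here, since every geometric input, Bing approximation, Alexander's theorem, tubing of spheres, and filling of boundary loops, is insensitive to the distinction between primal and dual bonds; the only point that deserves explicit verification is that Lemma~\ref{lemma:components} applies to mixed edge sets, which is immediate from inspection of its proof.
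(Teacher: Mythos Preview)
Your proposal is correct and matches the paper's approach exactly: the paper simply asserts that ``the same proof works in a slightly more general setting,'' and your line-by-line verification that each topological step of Lemma~\ref{lemma:separation} (and the supporting Lemma~\ref{lemma:components}) is insensitive to whether edges are primal or dual is precisely what justifies that assertion.
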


We can now prove Proposition~\ref{prop:vgammadual}.

\begin{proof}[Proof of Proposition~\ref{prop:vgammadual}]
    Suppose $U_{\gamma}$ occurs. Then there is an $N$ and a disk image $\mathcal{I}_{\gamma} \subset P\cap \Lambda_N$ so that $\gamma = \partial \mathcal{I}_{\gamma}.$ In particular, $U_{\gamma}$ occurs within $\Lambda_{N+1}$ with free boundary conditions, meaning that we include none of the plaquettes in $\partial \Lambda_{N+1}.$ Then by Theorem~\ref{lemma:dehn} there is a set $\mathcal{D} \subset \Lambda_{N+1} \setminus \paren{B \cup \partial \Lambda_{N+1}}$ that is homeomorphic to a disk so that $\gamma = \partial \mathcal{D}.$ Since $B \cup \partial \Lambda_{N+1}$ is closed, we can thicken $\mathcal{D}$ to obtain a set $W$ so that $\mathcal{D}\subset W\subset \Lambda_{N+1} \setminus \paren{B \cup \partial \Lambda_{N+1}}$ and so that $\partial W$ is homeomorphic to a sphere. Then $\partial W$ separates $\gamma$ and $B$ so $\gamma$ is not entangled with any subset of edges of $B.$

     Now suppose $\gamma$ is not entangled with any subset of edges of $B.$ Then by Corollaries~\ref{cor:dualseparation} and~\ref{cor:equivalence}, there is a piecewise--linear embedded sphere $\mathcal{S}$ that contains $\gamma$ in its interior and $B$ in its exterior. Then by Theorem~\ref{theorem:alexandersimply}, $\gamma$ is contractible in its component of $\R^3 \setminus \mathcal{S}$, so in particular it is contractible in $\R^3\setminus B.$ Since $\R^3\setminus B$ deformation retracts onto $P,$ $\gamma$ is contractible in $P.$  
\end{proof}

\section{Perimeter Law}\label{sec:perim}

We now consider the perimeter law regime. Let $p > 1-\pi_c^e.$ Notice that $\mathbb{P}_p(U_\gamma)$ is trivially bounded above by the probability that every edge of $\gamma$ is contained in at least one plaquette of $P$, which decays exponentially in the perimeter of $\gamma.$ 

Our strategy to prove a matching lower bound is similar to the argument given in~\cite{ACCFR83}: we prove that $\gamma$ is contractible by constructing an obstacle to an entangled set of dual bonds that interlocks $\gamma.$ Specifically, we build a doubly infinite cylinder passing through the area enclosed by $\gamma$ in its plane, and then show that the walls of the cylinder are disentangled from that plane with the appropriate probability. 

For two subsets of $G,H \subset \R^3$ we write $G \xleftrightarrow[]{e} H$ if there is an entangled subgraph that intersects both $G$ and $H.$

First we show that the positive dual $z$-axis is disentangled from the $xy$-plane with positive probability. Let 
\[K^+ = \{(1/2,1/2,z+1/2):z\in \mathbb{Z}^{\geq 0}\}\,,\] 
\[K_h^+ = K^+ \cap \{z \geq 1/2 + h\}\,,\] 
and 
\[G = \set{z = 0}\,.\] 
Notice that $G$ contains no dual vertices. 

\begin{lemma}
If $p > 1-\pi_c^e$, then $\mathbb{P}(K^+ \xleftrightarrow[]{e} G) < 1.$
\end{lemma}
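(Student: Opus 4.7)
\begin{sproof}
My plan is to decompose the event according to which vertex of $K^+$ participates in a witnessing entangled set, bound each piece by a tail of the entangled cluster size via a horizontal separating-sphere argument, and then combine that tail estimate with an explicit blocking configuration for the initial segment of $K^+.$

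Write $K^+ = \{a_k : k \geq 0\}$ with $a_k = (1/2,1/2,k+1/2),$ and let $A_k$ denote the event that there is an entangled $E \subset B$ whose vertex set contains $a_k$ and which also contains a bond crossing $G,$ so that $\{K^+ \xleftrightarrow[]{e} G\} = \bigcup_{k \geq 0} A_k.$ The first step is to show $\mathbb{P}(A_k) \leq \mathbb{P}(|C^e(a_k)| \geq k+1).$ Given such an $E,$ take $N$ with $E$ in the open cube of side $2N$ about the origin, and for each $j \in \{1,\ldots,k\}$ consider the piecewise--linear sphere $\partial\paren{\brac{-N,N}^2 \times \brac{j,N}}.$ This sphere separates $a_k$ (at height $k+1/2 > j$) from the endpoints of the $G$-crossing bond (at heights $\pm 1/2 < j$) and meets $E$ only through its horizontal face at $z=j,$ so by entanglement $E$ must contain a vertical bond crossing height $z=j.$ These $k$ distinct bonds have at least $k+1$ endpoints, and translation invariance then gives the desired bound on $\mathbb{P}(A_k).$

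Next, since $q = 1-p < \pi_c^e$ gives $\mathbb{E}|C^e(0)| = \sum_{n \geq 1}\mathbb{P}(|C^e(0)| \geq n) < \infty,$ for any $\epsilon > 0$ I can choose $K$ with $\sum_{k > K}\mathbb{P}(|C^e(0)| \geq k+1) < \epsilon.$ To handle the initial segment I would introduce the decreasing event $W_K$ that every dual bond incident to some $a_j,~j \in \{0,\ldots,K\},$ is absent, noting $\mathbb{P}(W_K) > 0$ and that on $W_K$ none of these $a_j$ is a vertex of $B.$ Each $A_k$ is increasing in $B$ --- enlarging $B$ preserves the entanglement of any fixed $E \subset B$ --- so the Harris--FKG inequality yields $\mathbb{P}(A_k \mid W_K) \leq \mathbb{P}(A_k),$ and combining the pieces gives
\[
\mathbb{P}\paren{K^+ \xleftrightarrow[]{e} G} \leq \paren{1-\mathbb{P}(W_K)} + \mathbb{P}(W_K)\sum_{k > K}\mathbb{P}(A_k) \leq 1 - \mathbb{P}(W_K)\paren{1-\epsilon},
\]
which is strictly less than $1$ whenever $\epsilon < 1.$

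The main obstacle I anticipate is in the first step: justifying that the half-box boundary may be taken as a piecewise--linear sphere disjoint from $E,$ for which I would apply Corollary~\ref{cor:equivalence} together with a small transverse perturbation of the bottom horizontal face so that it avoids every bond and vertex of $E,$ in the spirit of the topological setup of Section~\ref{sec:subtleties}.
\end{sproof}
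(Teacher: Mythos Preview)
Your argument is correct, but the route to the tail estimate is genuinely different from the paper's. The paper replaces $G$ by the plane of dual vertices $J=\set{z=-1/2}$ and bounds
\[
\mathbb{P}\paren{K_h^+ \xleftrightarrow[]{e} J}\leq \sum_{x\in K_h^+}\sum_{y\in J}\mathbb{P}\paren{x\xleftrightarrow[]{e} y},
\]
observing that as $(x,y)$ ranges over $K^+\times J$ the differences $y-x$ enumerate each dual vertex in a half-space exactly once, so the full double sum is at most $\sum_v\mathbb{P}(0\xleftrightarrow[]{e} v)=\mathbb{E}\abs{C^e_0}<\infty$ and the tail in $h$ vanishes. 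You instead decompose over which $a_k$ is hit and prove the pointwise inclusion $A_k\subset\set{\abs{C^e(a_k)}\geq k+1}$ by a separating-sphere argument forcing a vertical bond of $E$ through every integer height between $1$ and $k$; summing the tail $\sum_{k>K}\mathbb{P}(\abs{C^e(0)}\geq k+1)$ then plays the same role. Both approaches feed into the identical FKG-plus-blocking endgame. The paper's bound is slicker once one sees the bijection hidden in the double sum; yours is more geometric and gives the slightly sharper containment $A_k\subset\set{\abs{C^e(a_k)}\geq k+1}$ rather than just a probability inequality.

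One remark: the ``main obstacle'' you flag is not actually present. Under the hypothesis that no bond of $E$ crosses height $z=j$, the boundary $\partial\paren{\brac{-N,N}^2\times\brac{j,N}}$ is already a piecewise--linear sphere disjoint from $E$ (dual bonds meet the integer plane $\{z=j\}$ only if they are vertical and cross it, and the remaining faces lie outside the open cube containing $E$), so no perturbation or appeal to Corollary~\ref{cor:equivalence} is needed.
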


\begin{proof}
Since any edge passing through $G$ must necessarily have an endpoint in $J \coloneqq (\Z^3)^* \cap \set{z = -1/2},$ we have 
\[\set{K^+ \xleftrightarrow[]{e} G} = \set{K^+ \xleftrightarrow[]{e} J}\,,\]
so we will work with the latter event. Then observe that
\[\sum_{x \in K^+}\sum_{y \in J} \mathbb{P}(x \xleftrightarrow[]{e} y) \leq \mathbb{E}(|C_0^e|) < \infty
\]
because $p > 1-\pi_c^e.$ Then
\begin{align*}
    \mathbb{P}(K_h^+ \xleftrightarrow[]{e} J) \leq \sum_{x \in K_h^+}\sum_{y \in J} \mathbb{P}(x \xleftrightarrow[]{e} y) \xrightarrow[]{h \to \infty} 0\,,
\end{align*}
since the sum is the tail of a convergent series. In particular, there is an $h_0 \in \N$ such that $\mathbb{P}(K_{h_0}^+ \xleftrightarrow[]{e} J) < 1.$ Now setting all outgoing bonds to be closed is clearly sufficient to isolate a finite set of sites, so $\mathbb{P}((K^+ \setminus K_{h_0}^+) \centernot{\xleftrightarrow[]{e}} J) \geq p^{4h_0+2}.$ Thus, using the FKG inequality we have
\begin{align*}
    \mathbb{P}(K^+ \centernot{\xleftrightarrow[]{e}} J) &\geq \mathbb{P}(K_{h_0}^+ \centernot{\xleftrightarrow[]{e}} J) \mathbb{P}((K^+ \setminus K_{h_0}^+) \centernot{\xleftrightarrow[]{e}} J)\\
    &\geq (1-\mathbb{P}(K_{h_0}^+ \xleftrightarrow[]{e} J)) \cdot p^{4h+2} > 0.
\end{align*}
\end{proof}

As an immediate corollary of the FKG inequality, we can replace the positive $z$ axis with the entire $z$ axis. Let 
\[K \coloneqq \set{\paren{1/2,1/2,1/2 + z} : z \in \Z }\,.\]

\begin{corollary}\label{cor:zaxis}
    If $p > 1-\pi_c^e$, then $\mathbb{P}\paren{K \xleftrightarrow[]{e} G} < 1\,.$
\end{corollary}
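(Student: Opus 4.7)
The plan is to bootstrap from the positive-axis lemma just proved by splitting $K$ into its two halves and applying FKG. First I would set $K^- = \set{\paren{1/2,1/2,1/2+z}:z\in \Z^{<0}}$, so that $K = K^+\cup K^-$. The reflection $z\mapsto -z - 1$ of $\R^3$ preserves the dual lattice and the plaquette measure and sends $K^+$ to $K^-$ while fixing $G$ (which contains no dual vertices), so the previous lemma immediately gives $\mathbb{P}\paren{K^-\xleftrightarrow[]{e} G}<1$ as well.

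Next I would observe the set identity
\[
\set{K\xleftrightarrow[]{e} G} = \set{K^+\xleftrightarrow[]{e} G}\cup \set{K^-\xleftrightarrow[]{e} G},
\]
since any entangled subgraph meeting both $K$ and $G$ must meet $G$ together with at least one of $K^+$ or $K^-$. Complementing, the event $\set{K\centernot\xleftrightarrow[]{e} G}$ coincides with the intersection $\set{K^+\centernot\xleftrightarrow[]{e} G}\cap \set{K^-\centernot\xleftrightarrow[]{e} G}$.

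I would then check that each of these two complementary events is increasing in the plaquette configuration: if $v\xleftrightarrow[]{e} w$ through an entangled subset of $B$, then removing plaquettes only enlarges $B$ and preserves every such entangled subset, so non-entanglement is preserved under plaquette addition. Hence both events are increasing in the product measure on plaquettes, and the FKG inequality yields
\[
\mathbb{P}\paren{K\centernot\xleftrightarrow[]{e} G} \geq \mathbb{P}\paren{K^+\centernot\xleftrightarrow[]{e} G}\cdot \mathbb{P}\paren{K^-\centernot\xleftrightarrow[]{e} G} > 0,
\]
which is the desired conclusion. The only subtle point is verifying monotonicity of the non-entanglement event; everything else is a direct combination of the previous lemma with symmetry and FKG, so I expect no real obstacles.
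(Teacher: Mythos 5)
Your approach is essentially the paper's intended argument: the paper treats the corollary as an immediate FKG consequence, and your decomposition $K = K^+ \cup K^-$, union bound on the entanglement event, monotonicity (in $B$, hence anti-monotonicity in $P$), and Harris/FKG are exactly the intended filling-in. Two small points. First, the reflection you chose, $z \mapsto -z - 1$, neither fixes $G = \set{z=0}$ (it sends it to $\set{z=-1}$) nor maps $K^+$ onto $K^-$ (it misses the dual vertex $(1/2,1/2,-1/2)$). The correct map is $z \mapsto -z$, which is a lattice symmetry of $\paren{\Z^3}^*$, fixes $G$, and carries $K^+$ bijectively onto $K^-$; alternatively, since the lemma's argument only used the $z$-coordinate of $K^+$ relative to $G$, one can simply rerun it for $K^-$. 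Second, your monotonicity sentence is stated a bit backwards: the relevant fact is that shrinking $B$ (adding plaquettes) can only destroy entangled subsets, never create them, so $\set{K^\pm \xleftrightarrow[]{e} G}$ is increasing in $B$ and hence its complement is increasing in $P$, which is what FKG needs. With these two corrections the argument is complete and matches the paper's.
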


Now we will build the cylinder passing through $\gamma$ using several translated copies of $K.$ Without loss of generality, assume that $\gamma$ is contained in the plane $\set{z=0}.$ Let $\sigma_1,\ldots, \sigma_N$ be an enumeration of the plaquettes in $\set{z = 0}$ that both have a boundary edge in $\gamma$ and are in the bounded component of $\{z=0\} \setminus \gamma.$ In other words, these are the plaquettes along the inside of $\gamma$ in the plane $\set{z=0}.$ For each $1 \leq i \leq N,$ let $a_i$ be the center of $\sigma_i,$ and let $K^i \coloneqq K + a_i.$ The following lemma involves entanglement of a set of dual bonds with $\gamma,$ which we have not strictly speaking defined previously, since $\gamma$ is in the primal lattice. As in our earlier definition for subsets of $(\Z^3)^*$, we say a subset of $\R^3$ is entangled if it cannot be separated by an embedded sphere.

\begin{lemma}\label{lemma:perimeter}
Let $F_{\gamma}$ be the event that $B \cup \gamma$ has no infinite entangled component and $D_{\gamma} = \bigcap_{i \leq N} \set{K^i \centernot{\xleftrightarrow[]{e}} G}.$ Then $D_{\gamma} \cap F_{\gamma} \subset U_{\gamma}.$
\end{lemma}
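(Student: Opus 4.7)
My plan is to apply Proposition~\ref{prop:vgammadual}: on the event $D_\gamma \cap F_\gamma$, no $E \subset B$ can satisfy $E \cup \gamma$ entangled. Suppose for contradiction such an $E$ exists; since $E \cup \gamma$ is entangled and contained in the entangled component of $\gamma$ in $B \cup \gamma$, which is finite by $F_\gamma$, we may assume $E$ itself is finite.

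The key geometric observation is a ``wall property'' of the columns $K^1, \ldots, K^N$: in projection onto $G$, their base points $a_i$ are precisely the half-integer lattice sites at the centers of the plaquettes $\sigma_i$ along the inside of $\gamma$, so any dual-lattice path whose $(x,y)$-projection goes from strictly inside $\gamma$ to strictly outside $\gamma$ must pass through at least one $K^i$-vertex. Consequently, any graph-connected subgraph of $B$ whose vertices project both to the interior and to the exterior of $\gamma$ contains a $K^i$-vertex.

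Using this I would proceed by elimination. If $E$'s $(x,y)$-projection lay entirely inside the disk $D$ bounded by $\gamma$ in $G$, a vertical prism $D' \times [-M,M]$ with $D'\subsetneq D$ containing the projection of $E$ and $M$ larger than the vertical extent of $E$ would have boundary a sphere separating $E$ from $\gamma$. If $E$'s projection lay entirely outside $\gamma$ in $G$, a thin slab around $D$ would separate $\gamma$ from $E$. If $E$ lay entirely in $\set{z>0}$ or $\set{z<0}$, a large ball in that half-space separates. In each case we would contradict the entanglement of $E \cup \gamma$, so $E$ must straddle both the wall and $G$. A graph-connected subgraph $E'$ of $E$ realizing both a projection-crossing of $\gamma$ and a $G$-crossing is then a connected (hence entangled) subgraph of $B$ containing a $K^i$-vertex and a $G$-crossing bond. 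Such $E'$ witnesses $K^i \xleftrightarrow[]{e} G$, contradicting $D_\gamma$.

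The main obstacle is the case where $E$ is disconnected and its graph components individually respect the wall or stay on one side of $G$, yet collectively entangle with $\gamma$ (as in Borromean-type configurations). Here I would work with the entangled components of $E$ in $B$, apply Corollary~\ref{cor:dualseparation} to separate each ``tame'' component from $\gamma$ by its own sphere, and then patch these spheres together via the tube-joining construction from the proof of Lemma~\ref{lemma:components} into a single separating sphere---again contradicting the entanglement of $E \cup \gamma$ and completing the proof.
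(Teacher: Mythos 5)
Your setup is sound: reducing to finite $E$ via $F_\gamma$, observing the ``wall property'' that any bond whose $xy$-projection crosses $\gamma$ must have an endpoint in some $K^i$, and disposing of the cases where all of $E$ projects inside $\gamma$, projects outside $\gamma$, or lies on one side of $G$. These match the preliminary observations in the paper's proof.

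The gap is in the final paragraph, and it is not a fixable vagueness --- the proposed strategy cannot succeed. You correctly identify that the hard case is a disconnected $E$ whose entangled components in $B$ individually avoid witnessing $K^i \xleftrightarrow[]{e} G$, yet collectively entangle with $\gamma$. Your fix is to separate each ``tame'' entangled component from $\gamma$ by a sphere via Corollary~\ref{cor:dualseparation} and patch these spheres into one. But Corollary~\ref{cor:dualseparation} only separates entangled \emph{components of $F$}; if $E \cup \gamma$ is entangled (which is the hypothesis you are trying to contradict), then the entire set is a \emph{single} entangled component of $E \cup \gamma$, so there are no finite entangled components to separate, and the corollary gives nothing. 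More fundamentally, ``patch spheres together into a single separating sphere'' is precisely the operation that entanglement obstructs --- Borromean-type configurations are exactly those where each piece can be enclosed separately but there is no sphere separating the whole link, and no tube-joining trick can manufacture one. The tube construction in Lemma~\ref{lemma:components} only works because there the components are genuinely mutually disentangled.

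The paper avoids sphere-patching entirely. It notes that on $D_\gamma$, each entangled component $E_j$ of $E$ that projection-crosses $\gamma$ must lie entirely on one side of $G$ (otherwise $E_j$ itself would witness $K^i \xleftrightarrow[]{e} G$), and then passes to link diagrams in the $xy$-plane. Since the $E_j$ are pairwise disentangled, a sequence of generalized Reidemeister moves separates their diagrams, and the paper checks that each such move lifts to an ambient isotopy of $E \cup \gamma$ preserving the property that each $E_j$ crosses $\gamma$ with only one crossing sign. At the end, each $E_j$ can be slid above or below $\gamma$ and removed. The mechanism is an isotopy/knot-diagram argument, not a sphere-separation argument, and that distinction is exactly what makes the Borromean obstruction tractable.
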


\begin{proof}

Assume that $D_{\gamma}$ occurs. Let $E\subset B$ and let $E_1,\ldots, E_n$ be the the maximal entangled components of $E.$ We will show that $E\cup \gamma$ is not entangled. On the event $D_\gamma$, if the projection of $E_j$ onto $\{z=0\}$ intersects $\gamma$, then $E_j$ can either cross over $\gamma$ or under $\gamma$ but not both. In addition, if $E_j$ crosses over $\gamma$ and $E_k$ crosses under $\gamma$ then $E_j$ is entirely above $E_k$ in the sense that the $z$-coordinate of any point of $E_j$ is greater than the $z$-coordinate of any point of $E_k.$ Recall that a link diagram is a representation of a link as a 4-valent plane graph where at each vertex $v$ (which represents a crossing in the link), we mark two incident edges as crossing over at $v$ and two incident edges as crossing under at $v.$ Consider the link diagrams of $E$ and $E\cup \gamma$ obtained by projecting to the $xy$-plane (where we perturb each vertex and vertical bond to obtain a locally injective projection that is injective at the images of the vertices, without changing how the bonds overlap with $\gamma$). A foundational result of knot theory states that there is an ambient isotopy between two links if and only if a link diagram for one can be brought to a link diagram for the other by a finite sequence of moves called Reidemeister moves. There are three types of moves for links, and two additional moves for spatial graphs that involve vertices~\cite{kauffman1989invariants}. The five moves for spatial graphs are called generalized Reidemeister moves. 

It is not difficult to check that two spatial graphs are separable by spheres if and only if they can be separated by an ambient isotopy. To see the forward direction, take a separating sphere and use an isotopy of $\R^3$ that contracts the interior of the sphere to a small neighborhood of a point, and then use an isotopy that moves that neighborhood away from the other subgraph. As such, in the present argument, there is a finite sequence of generalized Reidemeister moves that turn $E$ into $n$ disjoint link diagrams, one for each $E_j.$ 
 
\begin{figure}
    \centering
    \includegraphics[width=0.95\linewidth]{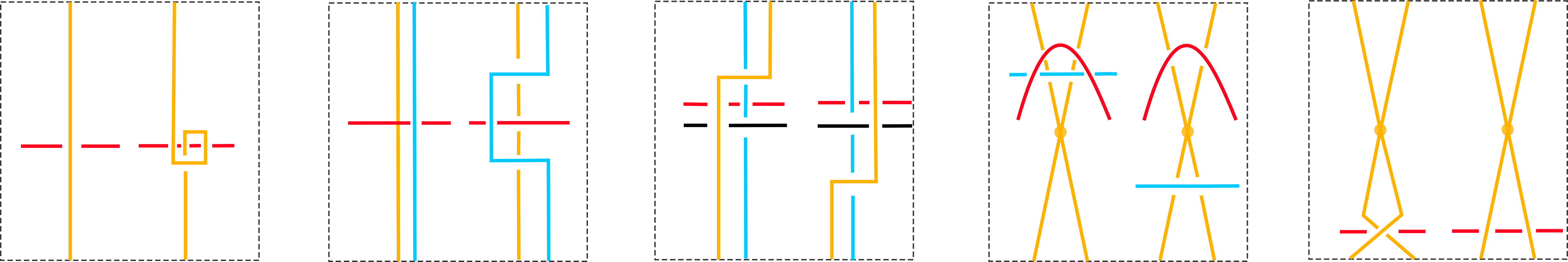}
    \caption{An illustration of each of the Reidemeister moves with the addition of a strand from $\gamma$ in red in several possible configurations.}
    \label{fig:reidemeister}
\end{figure}

We show that each of the generalized Reidemeister moves performed on the link diagram of $E$ can be realized by an ambient isotopy of $E\cup \gamma.$   We verify both that the move can be performed and that it preserves the property that each entangled component of $E$ either does not overlap with $\gamma$ or crosses entirely over or below $\gamma.$

Each of the five generalized Reidemeister moves can be realized by an ambient isotopy which moves a single strand $\alpha$, one which crosses entirely above or below the other strands. For example, in the Type III move the bottom strand can be moved down until it is completely below the other two strands and then moved to the other side of the crossing. So suppose that $\gamma$ crosses one or more of the strands in the region in which a Reidemeister move is defined and let $\alpha$ be as before. Without loss of generality, suppose that $\alpha$ crosses above the other strands in the region.  If the entangled component of $\alpha$ does not overlap with $\gamma$ or if it crosses above $\gamma$ we can isotope $\alpha$ without moving its endpoints so that an interior segment with all of the crossings is contained in a halfspace entirely above $\gamma$ and the other strands in the region. Then we can move this interior segment in the halfspace to realize the move. If $\alpha$ crosses below $\gamma,$ then $\gamma$ either does not cross or crosses above the remaining segments. As such, we can move an interior segment of $\gamma$ so that it is completely above the domain of the ambient isotopy realizing the move for the link diagram of $E.$ We then realize the move, which we note preserves the property that $\alpha$ only crosses below $\gamma.$

Now since any diagram of the link realized by $E$ can be reached from any other by a sequence of Reidemeister moves and $E$ has entangled components $E_1,\ldots, E_n,$ there is a sequence of moves after which there are no crossings between $E_j$ and $E_k$ for any $j \neq k.$ By the above discussion, these moves can be performed by ambient isotopies on $E\cup \gamma.$ This yields a link diagram for $E\cup \gamma$ in which all crossings are either within entangled components or involving $\gamma.$ Then, since $\gamma$ still only has crossings of a single type with each component, $E \cup \gamma$ is not entangled.
\end{proof}

The perimeter law bound follows quickly from the previous result.

\begin{proof}[Proof of Perimeter Law]
Note that if $E\cup \gamma$ contains an infinite entangled component then so does a graph obtained by adding finitely many edges of $\paren{\Z^3}^*$ to $E.$ As such, since we are assuming that $p > 1- \pi_c^e,$ the event $F_{\gamma}$ almost surely occurs. Then using the FKG inequality, Lemma~\ref{lemma:perimeter} and Corollary~\ref{cor:zaxis}, we have
\begin{align*}
    \mathbb{P}(U_\gamma)
    &\geq \mathbb{P}\paren{D_{\gamma} \cap F_{\gamma}}\\ 
    &\geq \mathbb{P}\paren{\cap_{i \leq N} K^i \centernot{\xleftrightarrow[]{e}} G}\\
    &\geq \mathbb{P}\paren{K \centernot{\xleftrightarrow[]{e}} G}^{\mathrm{Per}(\gamma)}\\
    &\geq \exp{(-\beta'\mathrm{Per}(\gamma))}.
\end{align*}
\end{proof}

\section{Area Law}\label{sec:area}

The argument to prove an area law for $U_{\gamma}$ in the regime $p < 1- p_c^{\lim}$ is similar to the corresponding proof for the event that $\gamma$ is nullhomologous in the regime $p < 1-p_c\paren{\Z^3}$ so we only give a sketch and note the differences. By Proposition~\ref{prop:vgammadual}, it is enough to consider the event that $\gamma$ is entangled with a set of dual bonds. In~\cite{ACCFR83}, a renormalized construction in slabs is used to produce a single loop that is linked with $\gamma$ when $p < 1-p_c.$ Assuming continuity in slabs for $m$-entangled percolation, a similar renormalization can be performed for $m$-entangled percolation for any fixed $m$ and $p < 1-p_c^m.$ In this case, bonds in the renormalized lattice ensure underlying $m$-entangled paths in the original lattice. Since projections of entangled paths onto $\Z^2$ are connected, entangled paths in slabs overlap in the same way that ordinary paths do.  The only adaptation required in the construction is verifying that components that witness box crossings are entangled, which may require revealing the states of edges outside of the box. However, since $m$ is fixed, the number of additional states that must be revealed is uniformly bounded above, so the dependence remains local. We summarize the properties of this construction in the following theorem.

\begin{theorem}\label{theorem:arearenorm}
    Let $p < 1- p_c^{\lim}.$ For any $m \in \N$ and any $\epsilon > 0,$ there is a renormalized dual bond percolation construction $\mathbb{B}$ consisting of boxes that represent sites and boxes representing bonds between them so that
    \begin{itemize}
        \item The states of bonds of $\mathbb{B}$ are $1$-dependent.
        \item The probability that a bond of $\mathbb{B}$ is open is at least $1-\epsilon.$
        \item If a bond $E$ of $\mathbb{B}$ with endpoints $V$ and $W$ is open, then there is an $m$-entangled subset of $B\cap \paren{E \cup V \cup W}$ that intersects both $V$ and $W.$
        \item For any site $V,$ there is at most one entangled component of $B$ that intersects $V$ and any other sites of $\mathbb{B}.$
    \end{itemize}
\end{theorem}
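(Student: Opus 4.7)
The plan is to adapt the Grimmett--Marstrand--style block renormalization in slabs from~\cite{ACCFR83} to $m$-entanglement. Fix $m\in\N$ and $\epsilon>0$, and invoke the hypothesized continuity of $p_c^m$ in slabs to choose a slab thickness $k$ and a block side length $L\gg m,k$ so that an $m$-entangled slab-crossing event has probability at least $1-\epsilon/C$ for a combinatorial constant $C$ to be determined.

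I would lay out the renormalized lattice as follows. Sites are translates of a box of side $L$ on a sublattice $L_0\Z^3$ with $L_0>L+2m$, leaving a buffer between adjacent sites. The bond box between adjacent sites $V$ and $W$ is a slab-shaped region of thickness $k$ bridging their facing sides, positioned so that non-adjacent bond boxes are separated by more than $m$ in $L^1$ distance. A bond is declared open if there is an $m$-entangled subgraph of $B$, contained in an $O(m)$-neighborhood of $V\cup E\cup W$, that meets both a distinguished hub subbox $H_V\subset V$ and its counterpart $H_W\subset W$. Property (1) follows because $m$-entangled crossing events are $O(m)$-local and non-adjacent bond boxes have disjoint $O(m)$-neighborhoods by our choice of $L_0$. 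Property (2) follows from the slab hypothesis together with the FKG inequality, applied to decompose the bond-box crossing into crossings between each hub and its facing side. Property (3) is immediate from the definition, up to a mild enlargement of the bond box to absorb the $O(m)$-neighborhood.

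The main obstacle is property (4). I would secure it by building the open-bond event so that its witness must pass through the fixed hub $H_V$; any two open bonds incident to $V$ then share a vertex in $H_V$, and their witnesses lie in a single entangled component of $B$. Ruling out a \emph{second} entangled component that both touches $V$ and reaches another site is the delicate step. The idea is to further require, in the definition of the event, that no $m$-entangled component of $B$ contained in a thickening of $V$ reaches outside $V$ except through the designated hub cluster; stray components that threaten this can be absorbed into the designated one by revealing a bounded number of additional edges in the buffer zone, which preserves $1$-dependence because the number of revealed edges depends only on $m$. The cost of these extra requirements is a small decrease in the open-bond probability, which can be absorbed into $\epsilon$ by taking $L$ larger and using slab continuity once more. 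Verifying that the hub-passage requirement does not meaningfully reduce the open-bond probability is the other technical point, and this I would again handle by an FKG decomposition into crossings from $H_V$ and $H_W$ to their respective facing sides.
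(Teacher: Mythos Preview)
Your overall strategy matches the paper's: adapt the ACCFR83 slab renormalization to $m$-entanglement, using slab continuity of $p_c^m$ as input. Properties (1)--(3) go through essentially as you describe, and the paper agrees that the only local adaptation needed there is to reveal $O(m)$ extra edges near the box boundary to certify $m$-entanglement. One minor conflation: in the paper the Grimmett--Marstrand machinery is used to \emph{prove} slab continuity (Section~\ref{sec:slabs}), while Theorem~\ref{theorem:arearenorm} itself is the simpler static ACCFR83 block construction that \emph{consumes} that result; you have merged the two.

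The substantive gap is in your treatment of property (4). The hub-passage requirement constrains only the \emph{witnesses} of open-bond events, not arbitrary entangled components of $B$, and you acknowledge this. But your proposed fix---appending a uniqueness clause to the bond event and ``absorbing'' stray components by revealing a bounded number of buffer-zone edges---does not work as written. Revealing edge states does not force components to merge; the uniqueness clause is not monotone, so FKG is unavailable; and absent a genuine sprinkling argument (which you do not invoke) there is no reason the clause holds with probability near $1$. Saying ``the number of revealed edges depends only on $m$'' addresses $1$-dependence but not the probability estimate.

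The paper bypasses all of this with a geometric observation you are missing: an entangled set in a slab has a \emph{connected} projection onto $\Z^2$. Consequently, in the slab renormalization, entangled crossings ``overlap in the same way that ordinary paths do,'' and the ACCFR83 uniqueness mechanism for connected crossings in slabs transfers verbatim---no hub clauses, no non-monotone events, no extra probabilistic work. That projection observation is the correct replacement for your hub argument.
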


By considering disjoint renormalized constructions in slabs orthogonal to $\gamma$ as in~\cite{ACCFR83}, it follows from Theorem~\ref{theorem:arearenorm} that there is an area law upper bound for the probability that there is an open renormalized loop that is linked with $\gamma$ when $p < 1- p_c^{\lim}.$ We now verify that such a loop prevents the event $U_{\gamma}.$ We begin by proving a technical lemma that implies that an entangled component contained in the interior of both a sphere and a box is contained in a single component of their intersection.

\begin{lemma}\label{lemma:boxcomponents}
    Let $R \subset \R^3$ be a box and let $\mathcal{S}$ be a piecewise--linear embedded sphere. Suppose $E \subset I\paren{\partial R}$ is a set of bonds. Then there is a collection of embedded spheres $S_1,\ldots,S_k\subset I\paren{\partial R}\setminus E$ so that if $v,w\in E$ then $v$ and $w$ are in the same path component of $R\setminus \mathcal{S}$ if and only if they are in the same path component of $I\paren{\partial R}\setminus \cup_{i=1}^k S_k.$  
\end{lemma}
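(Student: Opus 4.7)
My plan is to reduce the lemma to a purely topological claim: any finite partition of a finite set of bonds in $I\paren{\partial R}$ can be realized as the partition of $E$ by path components of the complement in $I\paren{\partial R}$ of a finite family of disjoint embedded spheres, each disjoint from $E$. This suffices because the conclusion depends on $\mathcal{S}$ only through the partition it induces on $E$.

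I would first perturb $\mathcal{S}$ by a small piecewise--linear isotopy so that $\mathcal{S}\cap E=\varnothing$ and $\mathcal{S}$ meets $\partial R$ transversally. Since $E$ is a finite union of compact line segments in the open set $I\paren{\partial R}$, such a perturbation can be arranged without changing which path component of $R\setminus\mathcal{S}$ contains each bond of $E$. Because $\mathcal{S}\cap R$ is then a compact PL surface with finitely many components, $R\setminus\mathcal{S}$ has finitely many path components, inducing a finite partition $\set{E_1,\ldots,E_m}$ of $E$.

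For each class $E_i$ I would construct a single embedded sphere $S_i\subset I\paren{\partial R}\setminus E$ whose bounded component contains $E_i$ and whose unbounded component contains $E\setminus E_i$. Start with a small open ball around each bond of $E_i$, chosen pairwise disjoint and disjoint from $\partial R\cup\paren{E\setminus E_i}$. Then iteratively fuse these balls using the tube construction from the proof of Lemma~\ref{lemma:components}: route a smooth arc between two balls through the path--connected open set $I\paren{\partial R}\setminus\paren{E\setminus E_i}$ and thicken it into a thin cylinder avoiding $E\setminus E_i$. After $\abs{E_i}-1$ tree--like fusions the resulting region is homeomorphic to a three--ball, so its boundary $S_i$ is an embedded sphere. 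Shrinking successive tubes further makes the family $\set{S_1,\ldots,S_m}$ pairwise disjoint.

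Finally I would verify that the induced partitions of $E$ agree. Pairwise disjointness of the $S_i$ forces that no $S_i$ lies in the bounded component of $S_j$ for $i\neq j$: otherwise $E_i$ would also be enclosed by $S_j$, contradicting $E_i\subset E\setminus E_j$. Because $\partial R$ lies in the unbounded component of $\R^3\setminus S_j$, the bounded component of $\R^3\setminus S_j$ is a connected open ball contained in $I\paren{\partial R}$, disjoint from the remaining $S_i$, and containing exactly the bonds of $E_j$. Hence two bonds of $E$ lie in the same path component of $I\paren{\partial R}\setminus \cup_{i=1}^m S_i$ if and only if they lie in the same class $E_j$. The only delicate step is the construction of the pairwise disjoint family of dumbbell spheres, which amounts to routine general position arguments in an open three--manifold with finitely many compact obstructions.
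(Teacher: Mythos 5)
Your proof is correct in outline but takes a genuinely different route from the paper's. The paper's argument performs surgery directly on the given sphere $\mathcal{S}$: it repeatedly picks a circle of $\mathcal{S}\cap\partial R$, finds a tubular neighborhood of the corresponding disk on $\partial R$ that avoids $E$, and replaces the annulus of $\mathcal{S}$ in that tube by the two cap disks, splitting one sphere into two with one fewer intersection circle with $\partial R$; each step preserves the induced partition of $E$, and after finitely many steps all spheres are disjoint from $\partial R$ and those lying inside $R$ are kept. You instead discard $\mathcal{S}$ after reading off the finite partition $\set{E_1,\ldots,E_m}$ of $E$ that it induces, and construct fresh ``dumbbell'' spheres from scratch, one per class, by fusing small balls with thin tubes avoiding $E\setminus E_i$. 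This is cleaner and more abstract --- it effectively proves the stronger statement that \emph{any} finite partition of a finite set of bonds in $I\paren{\partial R}$ is realizable by disjoint embedded spheres --- and avoids the bookkeeping of the iterated surgery.

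One step to tighten: the claim that $\mathcal{S}$ can be perturbed to be transverse to $\partial R$ ``without changing which path component of $R\setminus\mathcal{S}$ contains each bond of $E$.'' A small perturbation cannot split path components of $R\setminus\mathcal{S}$ (if $v,w$ are joined by a compact path $\gamma$ in $R\setminus\mathcal{S}$, the positive distance $d\paren{\gamma,\mathcal{S}}$ persists under a small move), but it \emph{can} merge them: if $\mathcal{S}$ meets $\partial R$ along a circle of tangency, the perturbed $\mathcal{S}\cap R$ may detach from $\partial R$ near that circle and open a path around what was previously a separating wall. This coarsens the partition, and your dumbbell construction would then realize the wrong, coarser partition. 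The fix is immediate: skip the perturbation. Since $\mathcal{S}$, $\partial R$, and $E$ are piecewise--linear and compact, $R\setminus\mathcal{S}$ already has finitely many path components, so the partition $\set{E_1,\ldots,E_m}$ is well defined and finite with no transversality assumption, and your construction applies directly to it.
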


\begin{proof}
    We describe an iterative process to produce the spheres. We may assume that $\mathcal{S}$ intersects $\partial R$ transversely so $\mathcal{S} \cap \partial R$ is a finite union of disjoint loops. $\mathcal{S}\setminus \mathcal{S}\cap\partial R$ is a finite union of surfaces with boundary at least two of which must be disks. Denote one of those disks by $D_{\alpha}$ and its boundary by $\alpha.$
    $\mathcal{S}$ intersects $\partial R$ transversely and $E$ is a closed subset of the interiors of those sets so we may find an embedding $f:\mathcal{D}^2\times \brac{0,1}\to \R^3$ (where $\mathcal{D}^2$ is a two-dimensional ball) so that $\im f\cap \mathcal{S}=f\paren{S^1\times \brac{0,1}}$, $\im f\cap \partial R=f\paren{\mathcal{D}^2\times \set{1/2}}=D_\alpha,$ and $\im f\cap E=\varnothing.$ Let $D_1=f\paren{\mathcal{D}^2\times \set{0}}$ and $D_2=f\paren{\mathcal{D}^2\times \set{1}}$ and set
    \[\mathcal{S}' \coloneqq \paren{\mathcal{S} \setminus f\paren{S^1\times\paren{0,1}}} \cup D_1 \cup D_2\]
    so $\mathcal{S}'$ is the union of two disjoint spheres so that $\mathcal{S}' \cap \partial R$ has one fewer component than $\mathcal{S} \cap \partial R.$ Note that $v,w\in E$ are in the same path component of $R\setminus \mathcal{S}$ if and only if they are in the same path component of $R\setminus \mathcal{S}'.$ Apply this process to both resulting spheres, continuing until no intersections with $\partial R$ remain. If the resulting collection contains only a single sphere outside of $R,$ we may push it inside of $R.$ Otherwise, we may conclude by discarding any spheres in the exterior of $R.$ 
\end{proof}

\begin{corollary}\label{cor:boxcomponents}
    Let $R \subset \R^3$ be a box and let $\mathcal{S}$ be a piecewise--linear embedded sphere. Suppose $E \subset I\paren{\partial R} \cap I\paren{\mathcal{S}}$ is an entangled set of bonds. Then $E$ is contained in a single component of $R \cap I\paren{\mathcal{S}}.$
\end{corollary}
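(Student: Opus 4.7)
The plan is to argue by contradiction using Lemma~\ref{lemma:boxcomponents} together with the definition of entanglement. Suppose that $E$ is contained in two or more distinct components of $R \cap I\paren{\mathcal{S}}$. Since $E \subset I\paren{\mathcal{S}}$, the components of $R \setminus \mathcal{S}$ that contain points of $E$ all lie in $I\paren{\mathcal{S}}$, and so $E$ is contained in at least two components of $R \setminus \mathcal{S}$. Applying Lemma~\ref{lemma:boxcomponents}, we obtain piecewise--linear embedded spheres $S_1,\ldots,S_k \subset I\paren{\partial R} \setminus E$ such that two bonds of $E$ lie in the same component of $R \setminus \mathcal{S}$ if and only if they lie in the same component of $I\paren{\partial R} \setminus \cup_i S_i$. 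Thus $E$ must meet at least two components of $I\paren{\partial R} \setminus \cup_i S_i$.

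Next, I would record a structural observation about the $S_i$: the construction in the proof of Lemma~\ref{lemma:boxcomponents} iteratively replaces a single sphere with two disjoint spheres by cutting along a disk $D_\alpha$ and capping off with nearby parallel disks $D_1, D_2$ provided by a tubular neighborhood, so the resulting spheres $S_1,\ldots,S_k$ are pairwise disjoint. By Theorem~\ref{theorem:alexandersimply}, each $S_i$ bounds a closed ball $B_i$ in $\R^3$, and since the $S_i$ are pairwise disjoint, the family $\set{B_i}_{i=1}^k$ is laminar: any two of these balls are either nested or disjoint.

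The key consequence of this laminar structure is that for any $x \in I\paren{\partial R} \setminus \cup_i S_i$, the index set $I(x) \coloneqq \set{i : x \in B_i}$ is locally constant, and two points lie in the same component of $I\paren{\partial R} \setminus \cup_i S_i$ if and only if they have the same index set. Therefore, if $E$ meets two different components, there must exist some index $i$ and points $v, w \in E$ with $v \in B_i$ and $w \notin B_i$. Then $S_i$ is an embedded sphere in $\R^3 \setminus E$ whose two complementary components each contain a point of $E$, contradicting the hypothesis that $E$ is entangled.

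The main substantive point is the laminar/tree structure of a disjoint collection of piecewise--linear spheres in $\R^3$ and its translation into the statement that some single $S_i$ separates $E$. I do not anticipate any serious obstacle here, as it follows directly from the piecewise--linear Jordan--Schoenflies theorem already stated as Theorem~\ref{theorem:alexandersimply}; the rest is bookkeeping.
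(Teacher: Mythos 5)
Your proof is correct and derives the corollary from Lemma~\ref{lemma:boxcomponents} in the way the paper evidently intends (the corollary is stated without a separate proof). The one point worth flagging is that disjointness of $S_1,\ldots,S_k$ is not part of the lemma's statement, so your argument properly relies on the additional structure visible in the lemma's proof (each surgery step replaces a sphere by two disjoint spheres, and the tube neighborhoods can be taken thin enough to miss the previously produced spheres); granting that, the laminar/nesting argument via Theorem~\ref{theorem:alexandersimply}, the reduction to a single separating $S_i$, and the contradiction with entanglement are all sound.
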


\begin{prop}\label{prop:entangledrenorm}
    Suppose there is a renormalized loop in $\mathbb{B}$ that is linked with $\gamma.$ Then $U_{\gamma}$ does not occur.
\end{prop}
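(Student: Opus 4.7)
The plan is to apply Proposition~\ref{prop:vgammadual}, which reduces the claim to exhibiting a subset $E\subset B$ so that $E\cup\gamma$ is entangled. I would write the renormalized loop as $\Gamma=V_1V_2\cdots V_kV_1$ and, using property (3) of Theorem~\ref{theorem:arearenorm}, choose for each renormalized bond an $m$-entangled subset $E_{i,i+1}\subset B$ joining $V_i$ to $V_{i+1}$. Property (4) forces the pieces meeting at a common site $V_i$ to lie in the same entangled component of $B$, so by transitivity along the loop, $E\coloneqq\bigcup_i E_{i,i+1}$ sits inside a single entangled component of $B$, and in particular $E$ is itself entangled.

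I would then show that $E\cup\gamma$ is entangled by contradiction: suppose a piecewise--linear embedded sphere $\mathcal{S}$ separates $E\cup\gamma$ (such a sphere exists by Corollary~\ref{cor:equivalence}). Since $E$ is entangled, $\mathcal{S}$ cannot split $E$, so after possibly swapping components we have $E\subset I\paren{\mathcal{S}}$ and $\gamma$ in the unbounded complement of $\mathcal{S}$. By Theorem~\ref{theorem:alexandersimply}, $I\paren{\mathcal{S}}$ is simply connected, so every loop contained in $I\paren{\mathcal{S}}$ has linking number zero with $\gamma$; the contradiction will come from producing a loop $\tilde{\Gamma}\subset I\paren{\mathcal{S}}$ with the same linking number with $\gamma$ as $\Gamma$.

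To construct $\tilde{\Gamma}$, for each renormalized bond let $R=V_i\cup R_{i,i+1}\cup V_{i+1}$, where $R_{i,i+1}$ denotes the bond box. The entangled subset $E_{i,i+1}$ lies in $I\paren{\partial R}\cap I\paren{\mathcal{S}}$, so by Corollary~\ref{cor:boxcomponents} it is contained in a single connected component of $R\cap I\paren{\mathcal{S}}$. That component contains an arc from a point of $V_i$ to a point of $V_{i+1}$ tracking close to $E_{i,i+1}$, and concatenating these arcs around $\Gamma$ yields the desired loop $\tilde{\Gamma}\subset I\paren{\mathcal{S}}$.

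The main obstacle is verifying that $\tilde{\Gamma}$ and $\Gamma$ realize the same linking with $\gamma$. The slab-based construction of Theorem~\ref{theorem:arearenorm} places each bond box so that its intersection with $\gamma$ is either empty or a single controlled arc, and since $E$ consists of dual bonds disjoint from $\gamma$, the arcs built inside each $R$ can be perturbed to lie in $R\setminus\gamma$. A straight-line homotopy inside each $R\setminus\gamma$ then exhibits $\tilde{\Gamma}$ and $\Gamma$ as homotopic in $\R^3\setminus\gamma$, so they have the same nonzero linking number with $\gamma$, contradicting the simple connectedness of $I\paren{\mathcal{S}}$ and establishing that $E\cup\gamma$ is entangled.
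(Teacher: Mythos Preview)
Your overall strategy matches the paper's: reduce via Proposition~\ref{prop:vgammadual}, take a sphere $\mathcal{S}$, and use Corollary~\ref{cor:boxcomponents} inside each renormalized tube to replace the entangled pieces by arcs, producing a loop in $I\paren{\mathcal{S}}$ linked with $\gamma$. The execution, however, has real gaps.

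The inference ``$E$ sits inside a single entangled component of $B$, and in particular $E$ is itself entangled'' is false in general: a subset of an entangled set need not be entangled (take two short arcs, one on each of a pair of linked rings). Property~(4) only says the $E_{i,i+1}$'s lie in a common entangled component of $B$; it does not make their union entangled. Relatedly, your arcs need not concatenate into a closed loop: the arc you build in the tube for $(i-1,i)$ ends at \emph{some} point of $V_i$, while the arc for $(i,i+1)$ begins at another, and those two points may lie in different components of $V_i\cap I\paren{\mathcal{S}}$, so you cannot join them inside $I\paren{\mathcal{S}}$. The paper resolves both issues with a single device you are missing: it first fixes vertices $w_j\in V_j$ and then chooses each entangled piece $E_j$ to contain both $w_j$ and $w_{j+1}$. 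Consecutive pieces now share a vertex, so any sphere avoiding $E$ has every (entangled) $E_j$ on one side and, by chaining through the $w_j$'s, all of $E$ on one side; moreover the arcs $\xi_j$ furnished by Corollary~\ref{cor:boxcomponents} run from $w_j$ to $w_{j+1}$ and concatenate automatically into a genuine loop.

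Finally, the straight-line homotopy in $R\setminus\gamma$ is not available in the boxes that $\gamma$ actually enters (and there must be such boxes, since $\Gamma$ links $\gamma$): the homotopy lives in $R$, not in $R\setminus\gamma$, and may cross $\gamma$. The cleaner observation, implicit in the paper, is that $\bigcup_j R_j$ is a solid torus in which both $\Gamma$ and the $\xi$-loop represent the generator of $H_1$, so they have the same linking number with $\gamma$.
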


\begin{proof}
Denote the renormalized sites in the loop by $V_1,\ldots,V_n,V_{n+1}=V_1$ in cyclic order. Let $R_j$ be the union of the boxes corresponding to the renormalized sites $V_j$ and $V_{j+1}$ and the renormalized edge between them. We may choose vertices $w_1,\ldots, w_n,w_{n+1}=w_1$ of $\paren{\Z^3}^*$ so that $w_j$ is entangled to $w_{j+1}$ inside of $R_j.$ Let $E_j$ be an entangled graph containing $w_j$ and $w_{j+1}$ inside of $R_j.$ Set $E=\cup_{j=1}^n E_j$, and suppose that $\mathcal{S}$ is a sphere containing $E$ that does not intersect $\gamma.$ By Corollary~\ref{cor:boxcomponents} $E_j$ is contained in a single component of $\mathcal{S}\cap B_j.$ Thus, there is a connected path of bonds $\xi_j$ connecting $w_j$ to $w_{j+1}$ inside that component of $\mathcal{S}\cap B_j.$ Then the simple loop formed by the segments $\xi_1,\ldots,\xi_j$ is linked with $\gamma$ and contained in the interior of $\mathcal{S}.$ Thus $\gamma$ is also contained in the interior of $\mathcal{S}.$ Since this is true for all such spheres $\mathcal{S},$ $E$ is entangled with $\gamma$ and $U_{\gamma}$ does not occur by Proposition~\ref{prop:vgammadual}.
\end{proof}

The existence of a nontrivial asymptotic area law constant $\alpha\paren{p}$ in the area law regime also follows from the methods of~\cite{ACCFR83}; the same proof of their Proposition 2.4 using tilings of $\gamma$ by smaller loops works in our setting without modification. Combining this observation with Theorem~\ref{theorem:arearenorm} and Proposition~\ref{prop:entangledrenorm} yields the following corollary:

\begin{corollary}\label{cor:arealaw}
    Let $p > 1-p_c^{\mathrm{lim}}.$ Then there is an $\alpha\paren{p} \in \paren{0,\infty}$ such that 
    \[\log\mathbb{P}_p\paren{U_\gamma} \sim -\alpha\paren{p}\mathrm{Area}\paren{\gamma}\]
    as the dimensions of the rectangular loop $\gamma$ tend to $\infty.$
\end{corollary}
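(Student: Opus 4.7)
The plan is to combine the renormalization of Theorem~\ref{theorem:arearenorm} and Proposition~\ref{prop:entangledrenorm} with a subadditivity argument à la~\cite[Proposition 2.4]{ACCFR83}. For the upper bound $\mathbb{P}_p\paren{U_\gamma} \leq \exp\paren{-\alpha'\paren{p}\mathrm{Area}\paren{\gamma}}$, fix $m$ with $p < 1-p_c^m$ and $\epsilon > 0$ small, and let $\mathbb{B}$ be the $1$-dependent renormalized bond percolation from Theorem~\ref{theorem:arearenorm} with bond-open probability at least $1-\epsilon$. By Proposition~\ref{prop:entangledrenorm}, $U_\gamma$ is contained in the event that no renormalized loop of $\mathbb{B}$ is linked with $\gamma$. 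Assuming without loss of generality that $\gamma$ lies in $\set{z=0}$, I would apply the Liggett--Schonmann--Stacey stochastic domination to reduce $\mathbb{B}$ to a highly supercritical independent bond model, in which the infinite open cluster visits the vertical column over each site of the interior of $\gamma$ with positive density. Absence of a linking loop then forces a dual contour in $\mathbb{B}$ separating the interior of $\gamma$ from vertical infinity, whose size must be $\Omega\paren{\mathrm{Area}\paren{\gamma}}$ by elementary isoperimetry; a standard Peierls estimate as in~\cite[Lemma 3.5]{ACCFR83} then yields the desired exponential bound.

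For the existence of a limit, partition the planar region bounded by $\gamma$ by congruent translates $\gamma_0^{(1)},\ldots,\gamma_0^{(k)}$ of a smaller rectangle $\gamma_0$ of the same aspect ratio. On $\bigcap_{i} U_{\gamma_0^{(i)}}$ each sub-loop bounds a continuous-image disk $D_i \subset P$; since adjacent tiles share edges already lying in the $1$-skeleton of $P,$ the $D_i$ glue along these edges into a single continuous-image disk in $P$ bounded by $\gamma,$ so $\bigcap_i U_{\gamma_0^{(i)}} \subset U_\gamma.$ Since $U_\gamma$ is increasing in the plaquette configuration, the FKG inequality gives
\begin{equation*}
    \mathbb{P}_p\paren{U_\gamma} \;\geq\; \prod_{i=1}^k \mathbb{P}_p\paren{U_{\gamma_0^{(i)}}} \;=\; \mathbb{P}_p\paren{U_{\gamma_0}}^{k}.
\end{equation*}
Combined with translation invariance and Fekete's lemma on dyadic rectangles of bounded aspect ratio, this yields existence of $\alpha\paren{p} \coloneqq \lim -\log\mathbb{P}_p\paren{U_\gamma}/\mathrm{Area}\paren{\gamma}.$ Positivity is immediate from the upper bound above, and the finiteness bound $\alpha\paren{p} \leq -\log p$ follows from the trivial lower bound $\mathbb{P}_p\paren{U_\gamma} \geq p^{\mathrm{Area}\paren{\gamma}}$ obtained by requiring every plaquette of the planar rectangle bounded by $\gamma$ to be open.

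The main technical obstacle I expect is verifying that the Peierls contour step of~\cite[Lemma 3.5]{ACCFR83} survives the passage to the $1$-dependent renormalized model $\mathbb{B}$ -- in particular, that the dual contours genuinely have size proportional to $\mathrm{Area}\paren{\gamma}$ rather than $\mathrm{Per}\paren{\gamma},$ and that the $1$-dependence does not spoil the exponential counting bound. Both points should be resolved by choosing $\epsilon$ small enough in Theorem~\ref{theorem:arearenorm}: after Liggett--Schonmann--Stacey domination, $\mathbb{B}$ majorizes a nearly-deterministic independent Bernoulli percolation for which the standard contour bounds apply, and the $1$-dependence contributes only an additional constant factor via a routine block argument.
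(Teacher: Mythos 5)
Your proposal follows the same route as the paper: the paper invokes Theorem~\ref{theorem:arearenorm} and Proposition~\ref{prop:entangledrenorm} for the area-law upper bound via the renormalized disjoint-slab construction adapted from~\cite{ACCFR83}, and then cites~\cite[Proposition~2.4]{ACCFR83} for the tiling/subadditivity argument that yields the limiting constant $\alpha(p)$. Your write-up adds plausible detail (LSS domination, a Peierls contour count in the renormalized dual, the explicit $\alpha(p) \le -\log p$ bound from forcing the planar disk open, and the FKG-plus-disk-gluing step for the tiling), all of which is consistent with what the paper leaves implicit, so this is essentially the paper's proof with the cited ingredients unpacked.
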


It then remains to show continuity of $m$-entanglement in slabs, which involves a slightly more careful adaptation of bond percolation methods.

\subsection{Continuity of $m$-entanglement in slabs}\label{sec:slabs}

Let $p^{k,m}_c$ denote the critical probability for $m$-entangled percolation in $\Z^3 \cap \R^2 \times \brac{-k,k}.$ In this section we show that $p^{k,m}_c \xrightarrow[]{k \to \infty} p^m_c.$ Since plaquettes are not involved in our arguments, we will consider bond percolation with probability $p$ on the primal lattice instead of with probability $1-p$ on the dual in order to simplify notation. We adapt the methods of Grimmett and Marstrand in~\cite{GM90}. Recall that their main theorem was the following:

\begin{theorem}[Grimmett, Marstrand]
Let $F$ be an infinite connected subset of $\Z^d,$ satisfying $p_c\paren{F} < 1.$ Then for every $\eta > 0$ there is a $k \in \N$ such that
\[p_c\paren{2kF + \Lambda_k} \leq p_c\paren{\Z^d} + \eta\,.\]
\end{theorem}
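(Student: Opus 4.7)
The plan is to carry out the classical block renormalization with \emph{sprinkling}, following Grimmett and Marstrand's original approach. Fix $\eta > 0$ and set $p = p_c\paren{\Z^d} + \eta$. Decompose $p = p_1 + \paren{1-p_1}p_2$ with $p_1$ slightly above $p_c\paren{\Z^d}$ and $p_2$ small, and realize bond percolation at parameter $p$ as two independent rounds: first open each edge with probability $p_1$, and then in a sprinkling round open each still-closed edge with probability $p_2$.

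First I would use that $p_1 > p_c\paren{\Z^d}$ to show that, for any $\delta > 0$, one can choose $k$ large enough that with probability at least $1-\delta$ the $p_1$-configuration inside a large cube $\Lambda_k$ is \emph{healthy}: it contains a giant component that meets each face of $\Lambda_k$ in a prescribed small window, with the relocation of contact points achieved by finite-energy surgery. This uses uniqueness of the infinite cluster together with local modifications, and is the technical heart of the Grimmett-Marstrand argument.

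Next I would set up the renormalization on $F$. For each $v \in F$ consider the block $B_v \coloneqq 2kv + \Lambda_k$ and declare $v$ \emph{open} if $B_v$ is healthy on every face it shares with a neighboring site of $F$. Adjacent healthy blocks $B_v$ and $B_w$ can then be glued across their common interface using the sprinkling round: the prescribed contact points on the shared face are joined with probability close to $1$, uniformly in $k$ for $k$ large, by the $p_2$-bonds in a thin slab around the interface, which is independent of the first-round configuration inside the blocks.

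The resulting process is a $1$-dependent site percolation on $F$ in which each site is open with probability at least $1 - 2\delta$. By Liggett-Schonmann-Stacey domination, for $\delta$ small this stochastically dominates Bernoulli site percolation on $F$ at a parameter exceeding $p_c\paren{F} < 1$, so there is an infinite renormalized cluster. Unwrapping the block structure yields an infinite bond cluster at parameter $p$ in $2kF + \Lambda_k$, proving $p_c\paren{2kF + \Lambda_k} \leq p$. The main obstacle I expect is the first step: producing healthy blocks with prescribed contact points requires delicate finite-energy surgery and uses uniqueness of the infinite cluster, and it is what carries most of the weight in the original Grimmett-Marstrand proof; the renormalization and domination steps, by contrast, are largely formal once healthy blocks are in hand.
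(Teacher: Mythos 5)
The paper does not prove this statement; it is quoted verbatim from Grimmett and Marstrand~\cite{GM90} as background for Theorem~\ref{theorem:mslabcontinuity}, in which the GM machinery (specifically their Lemmas~5 and~6, restated here as Lemmas~\ref{lemma:krnbound} and~\ref{lemma:G|H}) is adapted to $m$-entangled percolation. So there is no in-paper proof to compare against.

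Taken as a sketch of the GM argument itself, your outline captures the broad renormalization-with-sprinkling strategy but departs from the actual proof in two substantive ways. First, GM do not produce \emph{healthy blocks that meet each face in a prescribed small window with relocation by finite-energy surgery}. They use the \emph{seed} construction: $K\paren{r,n}$ is the set of boundary points of $\Lambda_n$ adjacent to a fully open translate of $\Lambda_r$ just outside, and the key probabilistic input (Lemma~5 of~\cite{GM90}, Lemma~\ref{lemma:krnbound} above) is that the cluster of $\Lambda_r$ inside $\Lambda_n$ reaches $K\paren{r,n}$ with high probability. It is the fully open seed, not surgery on contact points, that lets the exploration restart cleanly from a deterministic configuration; the sprinkling is then applied only to the boundary edges $\Delta^{\mathrm{edge}}\paren{W}$, conditionally on the explored region (Lemma~6 of~\cite{GM90}, Lemma~\ref{lemma:G|H} above). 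Second, invoking Liggett--Schonmann--Stacey domination is anachronistic: that result postdates~\cite{GM90}, and GM instead construct the comparison with supercritical site percolation on $F$ directly, by showing that each step of a static renormalized exploration succeeds with high conditional probability given the past. Your sketch describes a plausible modern route to the conclusion, but it does not reproduce the specific seed-and-sprinkle structure that the present paper is explicitly modeled on and modifies.
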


With some minor adjustments, we prove an analogue for $m$-entangled percolation. 

\begin{theorem}\label{theorem:mslabcontinuity}
Let $m \in \N$ and let $F$ be an infinite connected subset of $\Z^3,$ satisfying $p^m_c\paren{F} < 1.$ Then for every $\eta > 0$ there is a $k \in \N$ such that
\[p_c^m\paren{2kF + \Lambda_k} \leq p_c^m\paren{\Z^3} + \eta\,.\]
\end{theorem}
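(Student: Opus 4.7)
The plan is to adapt the dynamic renormalization argument of Grimmett and Marstrand to the $m$-entanglement setting, leveraging two properties of $m$-entangled percolation: it is monotone in the configuration, so FKG applies, and its states are $m$-dependent, so the $m$-entangled status of any finite cluster can be certified by examining edges within $L^1$-distance $m$ of that cluster. Fix $p > p_c^m\paren{\Z^3}$ and $\eta > 0$, and choose $p_1 \in (p-\eta, p)$ together with a small sprinkling parameter $p_2$ with $p = p_1 + p_2 - p_1 p_2$; this realizes percolation at $p$ as the union of two independent percolations at $p_1$ and $p_2.$

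First, I would establish the analogue of the Grimmett--Marstrand ``block lemma'' at parameter $p_1$: inside a box of side $k$, with probability tending to $1$ as $k \to \infty$, there is an $m$-entangled cluster meeting a positive density of vertices on each face of the box. Their argument uses only monotonicity, FKG, and the planar observation that two crossings of a common strip must intersect, and the last point carries over because projections of $m$-entangled paths onto $\Z^2$ are connected, as noted in the passage preceding Theorem~\ref{theorem:arearenorm}. Second, I would define a renormalized block process on $2kF + \Lambda_k$, declaring a block \emph{good} if it contains such a seed and if, for each adjacent block, the sprinkling round at $p_2$ opens an $m$-entangled set of at most $m$ bonds joining the two seeds through a pre-designated corridor. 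The resulting block process is $1$-dependent and can be made of arbitrarily high density by choosing $k$ large, so by the Liggett--Schonmann--Stacey theorem it stochastically dominates supercritical i.i.d.\ site percolation on $F$ and thus contains an infinite cluster, which lifts to an infinite $m$-entangled cluster in $2kF + \Lambda_k$ at parameter $p.$

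The main technical obstacle is the bookkeeping in the exploration/sprinkling step: when detecting the seed at parameter $p_1$, we must examine all edges within $L^1$-distance $m$ of the revealed cluster to certify $m$-entanglement, and the independence required for the sprinkling step demands that the sprinkling edges lie outside this enlarged explored region. This is handled by reserving corridors of thickness greater than $m$ along each face of each block, excluding them from the seed construction and using them exclusively for sprinkling. Because $m$ is fixed while $k \to \infty$, these corridors occupy a vanishing proportion of each block and do not affect the dense-seed construction at $p_1.$ Since $\eta$ can be taken arbitrarily small, this proves the desired continuity $p^{k,m}_c \to p^m_c.$
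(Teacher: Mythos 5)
Your overall strategy matches the paper's: adapt Grimmett--Marstrand's block renormalization with a sprinkling step, conclude via stochastic domination of supercritical site percolation, and recognize that the central obstacle is certifying $m$-entanglement of a revealed cluster without spoiling the fresh randomness needed for the sprinkling step. However, the remedy you propose --- reserving corridors of thickness greater than $m$ for sprinkling and excluding them from the seed construction --- is a different device from the paper's, and it does not slot cleanly into the GM90 machinery. In GM90 the seeds live in $T(r,n)$, attached to the \emph{outside} faces of $\Lambda_n$, and the block lemma connects $\Lambda_r$ to $K(r,n)$, the set of boundary vertices of $\Lambda_n$ adjacent to external seeds; relocating the seeds away from the faces to make room for your corridors would require reworking that construction rather than just tweaking it, and your stated form of the block lemma (a cluster meeting a positive density of vertices on each face) is weaker than the connection to $K(r,n)$ that GM90 and the paper's Lemma~\ref{lemma:krnbound} actually need. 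The paper's fix is more surgical: it introduces \emph{locally wired} boundary conditions inside $\Lambda_n$, targets seed \emph{centers} rather than seed boundaries, and takes $r > 2m$, so that a locally wired $m$-entangled connection to a seed center automatically becomes $m$-entangled in the full lattice once the seed is added --- no edges outside $\Lambda_n$ ever need to be revealed, and the geometry of GM90 is left intact, so only Lemmas 5 and 6 require modification. One further imprecision: your phrase ``opens an $m$-entangled set of at most $m$ bonds joining the two seeds'' conflates the truncation parameter $m$ (a bound on the size of admissible entangled witness sets) with the length of the sprinkling path; the sprinkling only needs to open an ordinary short open path, which is trivially entangled, so no such constraint applies. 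None of this is obviously fatal, but the corridor approach would need substantially more work to verify, and it is not what the paper does.
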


The main technique of~\cite{GM90} is also a renormalization argument, but a significant part of the work is done after the renormalized lattice is constructed. Thus, we will only prove $m$-entanglement versions of a few of the lemmas involved in the renormalization process, and leave the rest of the argument as is. In particular, we will prove versions of Lemmas 5 and 6 of~\cite{GM90}.

First, we recall the strategy of~\cite{GM90}. Large cubes of size $n$ represent the sites of a renormalized lattice, and pairs of sites are connected via paths between several smaller cubes of size $r,$ each of which has all edges open. We review some notation. Let 
\[T(n) = \set{\paren{x_1,x_2,x_3} \in \Z^3 : x_1 = n, 0 \leq x_2,x_3 \leq n} \subset \Lambda_n\,,\]
and
\[T(r,n) = \bigcup_{j = 1}^{2r+1} \set{je_1 + T(n)}\,.\]
In words $T(n)$ contains the ``internal'' vertices of a specified quadrant of one of the faces of the cube $\Lambda_n$ and $T(r,n)$ is a union of possible copies of $\Lambda_r$ attached to the outside of $\Lambda_n$ along $T(n).$
Then we call a translation of $\Lambda_r$ with all edges open a seed (sometimes called an $r$-pad, depending on the source), and define
\[K(r,n) = \set{x \in T(n) : x + e_1 \text{ is in a seed in }T(r,n)}\,.\]
We also consider several notions of boundaries of sets. For a vertex set $V,$ define the exterior vertex boundary 
\[\Delta^{\mathrm{ext}} V \coloneqq \set{v \in \Z^3 \setminus V : v \sim w \text{ for some } w \in V}\] 
and the interior vertex boundary
\[\Delta^{\mathrm{int}} V \coloneqq \set{v \in V : v \sim w \text{ for some } w \in \Z^3 \setminus V}\,.\]
We also define the edge boundary
$\Delta^{\mathrm{edge}} \paren{V} \coloneqq \set{e = \paren{v,w} : v \in V, w \in \Z^3 \setminus V}.$

The desired event for a renormalized box involves finding a path from the component revealed so far to $K\paren{r,n},$ that is, the set of points on the boundary of the box that are adjacent to an external seed. In principle, working with an entanglement model could cause an issue with a renormalization based on box crossings since we may have to reveal edges outside of the box in order to verify that there is an entangled path to its boundary. However, in the case of an $m$-entangled path leading to a seed, this issue is avoidable. Any $m$-entangled path to the boundary of a box in the full lattice contains an internal $m$-entangled path to the boundary of the box with wired boundary conditions. If such a wired path leads to the center of a seed and $r > m,$ the addition of the seed will necessarily make this path $m$-entangled in the full lattice without needing to reveal any additional edges. This is the core of the adaptation of the renormalization that we will now spell out in the following two lemmas.

Similar to the notation from before, for $v,w \in \Z^3,$ write $v \xleftrightarrow[]{m} w$ if $v$ and $w$ are connected by an $m$-entangled set of bonds. For a set $A \subset \Z^3$ we also write $v \xleftrightarrow[]{m} w \text{ in } A$ if $v$ and $w$ are connected by an $m$-entangled set of bonds in $B \cap A$ and we write $v \xleftrightarrow[\text{wired}]{m} w \text{ in } A$ if $v$ and $w$ are connected by an $m$-entangled set of bonds in $\paren{B \cup \Delta^{\mathrm{int}} A} \cap A.$ In other words, these are the events that $v$ and $w$ are connected by $m$-entangled sets of bonds in $A$ with free and wired boundary conditions respectively. 

\begin{lemma}\label{lemma:krnbound}
Let $p > p_c^m.$ Then for every $\eta > 0$ there exist $r,n \in \N$ so that 
\[\mathbb{P}_p\paren{\Lambda_r \xleftrightarrow[]{m} K(r,n) \text{ in } \Lambda_n} \geq 1-\eta\,.\]
\end{lemma}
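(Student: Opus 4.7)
The plan is to adapt the Grimmett--Marstrand renormalization argument to the $m$-entanglement setting, exploiting two facts pointed out in the discussion preceding the lemma: $m$-entangled percolation is $m$-dependent (states of bonds at $L^1$-distance greater than $m$ are independent), and any $m$-entangled wired path reaching the center of a seed of side length $r > m$ is automatically $m$-entangled in the full lattice, because the seed alone provides enough open bonds locally to certify the bound on the number of bonds in the entangled set without revealing anything outside the seed.

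First I would fix $r > m$ and condition on the event that every edge inside the central copy of $\Lambda_r$ is open; this has positive probability $p^{|E\paren{\Lambda_r}|}$ and, by FKG, can only increase the probability of any increasing $m$-entangled connection event from the seed outward. Since $p > p_c^m,$ under this conditioning the seed almost surely belongs to an infinite $m$-entangled cluster, so for $n$ large, with probability tending to one this cluster reaches many vertices of the face $T(n).$ The precise quantitative version of this statement is the $m$-entanglement analog of Grimmett--Marstrand's Lemma that the expected number of vertices of $T(n)$ reached from the origin by the infinite cluster grows with $n$ (obtained by ergodicity, positivity of the percolation probability, and a standard first-moment calculation restricted to $\Lambda_n$).

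To convert this into the stated event I would use a two-stage sprinkling: write $p = p_1 + p_2$ with $p_1 > p_c^m,$ expose bonds inside $\Lambda_n$ at density $p_1$ to produce a candidate set $V^\star \subset T(n)$ of boundary vertices that are $m$-connected to the central seed under wired boundary conditions, and then independently expose bonds in $T(r,n)$ using the second copy to attempt to plant an outer seed adjacent to each $v \in V^\star.$ Because $T(r,n)$ is disjoint from $\Lambda_n$ and $r > m,$ the states of candidate seeds in $T(r,n)$ are independent of the $m$-entangled structure revealed inside $\Lambda_n,$ and by the seed trick the presence of an outer seed at $v + e_1$ lifts the wired $m$-entangled path from the central seed to $v$ into a genuine $m$-entangled path in $\Lambda_n$ ending at $v \in K\paren{r,n}.$ A union bound (or, if needed, a second-moment argument on the number of successful $v$) applied to $|V^\star|$ vertices, each independently producing a seed with probability $p^{|E\paren{\Lambda_r}|},$ drives the failure probability below $\eta$ for appropriate choices of $r$ and $n.$

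The main obstacle is exactly the one flagged in the paragraph before the lemma: for bond percolation a wired path inside the box immediately gives a path in the full lattice, whereas for $m$-entanglement one a priori has to reveal edges outside $\Lambda_n$ to certify $m$-entanglement, which would destroy the independence needed for a renormalization argument. Choosing $r > m$ and using the outer seed as a self-contained certificate is what defuses this; once this is in place, the remainder of the Grimmett--Marstrand renormalization (Lemmas~6 and beyond of~\cite{GM90}, which do not reference the underlying model beyond the conclusion of Lemma~\ref{lemma:krnbound}) can be imported with essentially no change.
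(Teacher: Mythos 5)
Your high-level plan — plant a seed just outside $T(n)$, use $r$ large compared to $m$, and exploit $m$-dependence to avoid revealing bonds outside the box — is the right idea, and the argument for lower-bounding the number of boundary hits (import the GM90 Lemma~5 argument) is also the right shape. However, the central certification step is stated too loosely, and the gap there is precisely the point the paper's proof spends most of its effort on.

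A \emph{wired} $m$-entangled path from $\Lambda_r$ to $v\in T(n)$ is allowed to use the free boundary bonds of $\Delta^{\mathrm{int}}\Lambda_n$ anywhere along its route, not only near $v.$ Planting a seed at $v+e_1$ therefore does not ``lift the wired $m$-entangled path\ldots into a genuine $m$-entangled path'': the path may lean on wired bonds near a corner or a far edge of $\partial\Lambda_n$ that the seed does nothing to replace, and those bonds may well be closed in the actual configuration. The paper's fix is to introduce the intermediate notion of a \emph{locally wired} connection, wired only inside $\Lambda_m$-neighborhoods of the two endpoints, and to observe that a wired path to $\Delta^{\mathrm{int}}\Lambda_n$ always contains a locally wired subpath to some boundary vertex. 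The set $V(n)$ is then defined with locally wired connections, and ``good'' squares of $T(n)$ are required both to contain the center of a seed face and to have all boundary edges within $L^1$-distance $m$ of the square open; the choice $r>2m$ (not merely $r>m$) guarantees the local wiring is absorbed into the seed/open-boundary region. These conditions are exactly what make your seed trick actually close, and without them the argument does not go through. Separately, the sprinkling $p=p_1+p_2$ you introduce is unnecessary for this lemma (the paper uses a single parameter throughout, relying on positive association and the $24$-copy symmetrization with Harris' inequality to control $|V(n)|$); sprinkling belongs in Lemma~\ref{lemma:G|H}, where a density gap $\delta$ is genuinely needed, not here.
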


\begin{proof}
Let $r>2m$ be large enough so that $\mathbb{P}_p\paren{\Lambda_r \xleftrightarrow[]{m} \infty} > 1 - \paren{\frac{1}{3}\eta}^{24}$ and let $M$ be determined later. 

We say that $v$ is locally wired entangled to $w$ in $A$ and write $v \xleftrightarrow[\text{l.w.}]{m} w \text{ in } A$ if they are connected by an $m$-entangled set of bonds in $\paren{B \cap A} \cup \paren{\Delta^{\mathrm{int}} A \cap \paren{\Lambda_m\paren{v} \cup \Lambda_m\paren{w}}}.$ That is to say that there are wired boundary conditions in a neighborhood of $v$ and $w,$ and free boundary conditions anywhere else. We make this distinction in order to exclude entangled paths that pass near the boundary of $A$ before reaching a seed. Notice that a wired path from a vertex to $\Delta^{\mathrm{int}} \Lambda_n$ necessarily contains a subpath to $\Delta^{\mathrm{int}} \Lambda_n$ that is locally wired in $\Lambda_n.$

Let 
\[V(n) = \set{v \in T(n) : \Lambda_r \xleftrightarrow[\text{l.w.}]{m} v \text{ in }\Lambda_n}\,.\]
Suppose that we have chosen $n$ such that $2r+1$ divides $n+1.$ Then $T(n)$ can be divided into $2r \times 2r$ vertex--disjoint squares. If $\abs{V(n)} \geq \paren{2r+1}^2 M,$ then $\Lambda_r$ will be locally wired $m$-entangled to a vertex in at least $M$ of these squares. Then if one of the squares does not intersect the boundary of $T(n)$ and contains the center of one of the faces of a seed, since $r>2m,$ $\Lambda_r$ will necessarily be $m$-entangled to that seed in the full lattice. In the case of a boundary square, we may need additional edges on the boundary to ensure that a wired entangled connection is an entangled connection in the full lattice, but the number of edges is bounded above by $3m^{2},$ the number of vertices in a $m$-neighborhood of a corner of the box. We say that a square $S\subset T(n)$ with side length $2r$ is good if $S$ contains the center of one of the faces of a seed and all edges between vertices in $\Delta^{\mathrm{int}} \Lambda_n$ within distance $m$ of $S$ are open. The set of good squares is then $m$-dependent, so we can take $M$ large enough so that for any set $X$ of at least $M$ different squares,
\[\mathbb{P}_p\paren{\text{At least one of the squares in $X$ is good}} \geq 1-\eta/2.\]

Thus, we only need a sufficient lower bound on $\mathbb{P}_p\paren{\abs{V(n)} \geq (2r+1)^2 M}$ to complete the proof.

Let 
\[U(n) = \set{v \in \Delta^{\mathrm{int}} \Lambda_n : \Lambda_r \xleftrightarrow[\text{l.w.}]{m} v \text{ in }\Lambda_n}\,.\]
By construction, the vertices of $\Delta^{\mathrm{int}} \Lambda_n$ are contained in a union of $24$ copies of $T(n)$ rotated about the coordinate axes. These rotations are symmetries of $\Z^3,$ so since the numbers of paths to different regions of $\Delta^{\mathrm{int}} \Lambda_n$ are positively correlated, applying Harris' Lemma gives
\[\mathbb{P}_p\paren{\abs{U(n)} < 24(2r+1)^2 M} \geq \mathbb{P}_p\paren{\abs{V(n)} < (2r+1)^2 M}^{24}\,.\]
We can bound
\begin{align*}
\mathbb{P}_p\paren{\abs{U(n)} < 24(2r+1)^2 M, \Lambda_n \xleftrightarrow[]{m} \infty} 
&\leq \mathbb{P}_p\paren{0 < \abs{U(n)} < 24(2r+1)^2 M} \\
&\leq (1-p)^{-3 \cdot 24(2r+1)^2 M}\mathbb{P}_p\paren{\abs{U(n)} \neq 0, \abs{U(n+1)} = 0}\\
& \xrightarrow[]{n \to \infty} 0\,,
\end{align*}
because turning off all edges leading out of $U(n)$ ensures that $U(n+1)$ is empty (where we are using that $r>2m$). Then this gives
\begin{align*}
    \mathbb{P}_p\paren{\abs{V(n)} < (2r+1)^2 M} &\leq \mathbb{P}_p\paren{\abs{U(n)} < 24(2r+1)^2 M}^{\frac{1}{24}}\\
    &\leq \paren{\mathbb{P}_p\paren{\abs{U(n)} < 24(2r+1)^2 M, \Lambda_n \xleftrightarrow[]{m} \infty} + \mathbb{P}_p\paren{\Lambda_n \centernot{\xleftrightarrow[]{m}} \infty}}^{\frac{1}{24}}\\
    &\leq \paren{\paren{\frac{1}{3}\eta}^{24} + o(1)}^{\frac{1}{24}}\\
    &\leq \eta/2\,
\end{align*}
for sufficiently large $n.$

Then setting $n$ large enough and using Harris' Lemma gives
\begin{align*}
    \mathbb{P}_p\paren{\Lambda_r \xleftrightarrow[]{m} K(r,n) \text{ in } \Lambda_n} &\geq \mathbb{P}_p\paren{\set{\abs{V(n)} \geq (2r+1)^2 M} \cap \set{\text{One of the $M$ squares is good}}} \\
    &\geq \paren{1 - \frac{\eta}{2}}^2 \geq 1-\eta.
\end{align*}

\end{proof}

For the next lemma we will take the perspective that each edge $e$ of the $\Z^3$ lattice is assigned a $\mathrm{Unif}[0,1]$ random variable $X_e$ and that percolation with probability $q$ is the random subgraph of edges $e$ for which $X_e \le q.$ Then for any constant $\rho \in [0,1],$ we say an edge $e$ is $\rho$-open if $X_e \le \rho.$ Note that this definition makes sense regardless of the percolation parameter $p.$ 

\begin{lemma}\label{lemma:G|H}
Let $p > p_c^m.$ For every $\epsilon,\delta > 0$ there exist $r,n \in \N$ with $n > 2r$ such that the following holds: Let $\Lambda_r \subset W \subset \Lambda_n$ such that $\paren{W \cup \Delta^{\mathrm{ext}}(W)} \cap T(n) = \varnothing.$ Let $\beta : \Delta^{\mathrm{edge}} W \cap E(\Lambda_n) \to [0,1-\delta].$ Let 
\begin{align*}
    G = \bigl\{&\text{$W$ is wired $m$-entangled to $K(r,n)$ by a set of bonds for which each bond $e$ is}\\
    &\text{$(\beta(e) + \delta)$-open if $e \in \Delta^{\mathrm{edge}}(W) \cap E\paren{\Lambda_n}$ and $p$-open otherwise.}\bigr\}
\end{align*}
and
\[H = \set{\text{$e$ is $\beta(e)$-closed for each $e \in \Delta^{\mathrm{edge}}(W) \cap E(\Lambda_n)$}}\,.\]
Then 
\[\mathbb{P}\paren{G \mid H} > 1-\epsilon\,.\]
\end{lemma}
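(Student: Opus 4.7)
The plan is to adapt Grimmett and Marstrand's proof of the analogous bond-percolation result (Lemma 6 of~\cite{GM90}), using Lemma~\ref{lemma:krnbound} as the main input and exploiting the $m$-dependence of $m$-entangled percolation.

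First I would unpack the conditional law given $H$. Because the underlying variables $X_e$ are independent and $H$ depends only on edges in $\Delta^{\mathrm{edge}}(W) \cap E(\Lambda_n)$, conditioning on $H$ leaves all non-boundary edges unchanged (each independently $p$-open with probability $p$), while each boundary edge $e$ becomes conditionally uniform on $(\beta(e),1]$; in particular it is independently $(\beta(e)+\delta)$-open with conditional probability $\delta/(1-\beta(e)) \geq \delta$. Hence, conditional on $H$, the percolation whose connectivity defines $G$ stochastically dominates the product measure in which each non-boundary edge is open with probability $p > p_c^m$ and each boundary edge is independently open with probability $\delta$, and it is enough to prove the analogous estimate in this product measure.

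Next, for any $\eta > 0$ I would apply Lemma~\ref{lemma:krnbound} to choose $r \gg m$ and then $n \gg r$ with $\mathbb{P}_p(\Lambda_r \xleftrightarrow[]{m} K(r,n) \text{ in } \Lambda_n) > 1-\eta$. Since $\Lambda_r \subset W$, this already furnishes an $m$-entangled connection from $W$ to $K(r,n)$ in $\Lambda_n$ using only $p$-open edges, with high probability under the unconditional law. To upgrade this to a wired $m$-entangled connection using only the edges allowed in $G$, I would perform a reveal-and-sprinkle argument: first reveal the states of all non-boundary edges and identify the locally wired $m$-entangled cluster of $\Lambda_r$ using only those edges; then reveal the boundary edges. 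By the same box-crossing argument used in the proof of Lemma~\ref{lemma:krnbound}, the revealed cluster contacts $\Delta^{\mathrm{int}}(W)$ at a number of sites that can be made as large as desired by choosing $r$ and $n$ large, and each such site has at least one boundary edge in $\Delta^{\mathrm{edge}}(W)\cap E(\Lambda_n)$ that is independently $(\beta(e)+\delta)$-open with conditional probability at least $\delta$. Thus the probability that no sprinkled exit edge succeeds is at most $(1-\delta)^{\Omega(r)} < \epsilon/2$, and a further application of Lemma~\ref{lemma:krnbound} starting from the vertex just outside $W$ yields the connection onward to $K(r,n)$ with probability greater than $1-\epsilon/2$.

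The main obstacle is the non-local nature of $m$-entanglement: a candidate $m$-entangled subgraph may simultaneously use edges on both sides of $\partial W$, so a naive reveal of the non-boundary edges does not cleanly isolate a cluster. I would handle this exactly as in the proof of Lemma~\ref{lemma:krnbound}, by inserting a buffer of width $m$ around $\partial W$ and substituting locally wired entanglement for ordinary entanglement inside the buffer. Any $m$-entangled path between the inside and outside of $W$ must still cross the buffer through some boundary edge, and because $m$ is fixed while $r$ and $n$ tend to infinity the buffer costs only a uniformly bounded factor that is absorbed into the constants. Combining these ingredients gives $\mathbb{P}(G \mid H) > 1-\epsilon$ for suitably chosen $r$ and $n$.
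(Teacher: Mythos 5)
Your initial observation --- that conditioning on $H$ makes each boundary edge $e$ independently $(\beta(e)+\delta)$-open with conditional probability $\delta/(1-\beta(e)) \geq \delta$, while leaving non-boundary edges untouched --- is correct and matches the spirit of the lemma. However, the core of your reveal-and-sprinkle argument has a genuine gap.

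You assert that ``by the same box-crossing argument used in the proof of Lemma~\ref{lemma:krnbound}, the revealed cluster contacts $\Delta^{\mathrm{int}}(W)$ at a number of sites that can be made as large as desired.'' This step is not justified. In the proof of Lemma~\ref{lemma:krnbound}, the growth of $\abs{U(n)}$ and $\abs{V(n)}$ exploits the symmetry of the cube $\Lambda_n$: the $24$ rotated copies of $T(n)$ cover $\Delta^{\mathrm{int}}\Lambda_n$ and Harris' inequality distributes the contacts evenly. There is no analogous symmetry for an arbitrary $W$ with $\Lambda_r \subset W \subset \Lambda_n$; the set $W$ is given to you adversarially, and the $m$-entangled cluster of $\Lambda_r$ using only non-boundary edges (which is confined to $W$) could a priori touch $\Delta^{\mathrm{int}}W$ at very few sites. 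The paper closes exactly this gap by a different mechanism: it considers the family $Y = Y(K(r,n))$ of intersections of minimal wired $m$-entangled witnesses with $\Delta^{\mathrm{edge}}W$, each of size at most $6m$, and shows that if $\abs{Y} \leq t$ then the connection could be destroyed with probability at least $(1-q^{6m})^t$, which contradicts the $1-\eta$ lower bound from Lemma~\ref{lemma:krnbound} when $\eta$ is chosen small relative to $t$. This ``close all exits'' counting argument, not a direct box-crossing count, is what produces the many independent boundary-crossing opportunities that the sprinkle then exploits; your proposal has no substitute for it.

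There is a secondary gap at the end: you invoke Lemma~\ref{lemma:krnbound} ``starting from the vertex just outside $W$,'' but that lemma is stated for a seed $\Lambda_r$, not for a single site, so it does not apply as written. The paper avoids this entirely by letting the witness sets encode the full connection from $W$ out to $K(r,n)$ and controlling only the openness of their boundary portions, rather than stitching together two separate applications of Lemma~\ref{lemma:krnbound}.
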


\begin{proof}
Let $\epsilon,\delta > 0$ and then fix $W,\beta$ satisfying the above conditions. Let $t \in \N$ be large enough so that
\[(1-\delta^{6m})^t \leq \frac{\epsilon}{2}\,.\]
Then let $\eta > 0$ so that
\[\eta < \frac{\epsilon (1-p^{6m})^t}{2}\,.\]
By Lemma~\ref{lemma:krnbound}, there exist $r,n$ such that
\[\mathbb{P}_p\paren{\Lambda_r \xleftrightarrow[]{m} K(r,n) \text{ in } \Lambda_n} \geq 1-\eta\,.\]
Now we consider wired $m$-entangled connections between $W$ and $K(r,n).$ We have
\[\mathbb{P}_p\paren{\Delta^{\mathrm{int}} W \xleftrightarrow[\mathrm{wired}]{m} K(r,n) \text{ in } \Lambda_n} \geq \mathbb{P}_p\paren{\Lambda_r \xleftrightarrow[\mathrm{wired}]{m} K(r,n) \text{ in } \Lambda_n} \geq 1-\eta\,.\]

Now for any set $K\subset T(n),$ let $U(K)$ be the set of (possibly non-disjoint) minimal witnesses for $\set{W \xleftrightarrow[\mathrm{wired}]{m} K \text{ in } \Lambda_n}.$ Then let 
\[Y(K) = \set{Z \cap \Delta^{\mathrm{edge}} W : Z \in U(K)}\,.\]
Then since any $m$-entangled connection between $W$ and $K$ must pass through $Y(K),$ we have
\begin{align*}
    \mathbb{P}_p\paren{\Delta^{\mathrm{int}} W \centernot{\xleftrightarrow[\mathrm{wired}]{m}} K(r,n) \text{ in } \Lambda_n} &= \mathbb{P}_p\paren{\text{at least one bond of each element of $Y(K)$ is closed}}\\
    &\geq (1-q^{6m})^{t}\mathbb{P}_p\paren{\abs{Y(K)}\leq t}\,.
\end{align*}
Here we are using that a minimal $m$-entangled set intersects $\Delta^{\mathrm{int}} W$ in at most $6m$ vertices.
Rearranging and setting $K = K(r,n)$, we get
\begin{align}
    \mathbb{P}_p\paren{\abs{Y(K(r,n))} > t} &\geq \paren{1-(1-q^{6m})^{-t}
    \mathbb{P}_p\paren{\Delta^{\mathrm{int}} W \centernot{\xleftrightarrow[]{m}} K(r,n) \text{ in } \Lambda_n}}.
\end{align}
Let $Y = Y(K(r,n)).$ Given $A \in Y,$ we say that $A$ is open (or $p$-open, etc.) if each $e \in A$ is open and closed otherwise. Now conditioned on $Y,$ the random variables $X_e$ are independent for $e \in \cup_{A \in Y} A,$ so we can bound
\[\mathbb{P}_p\paren{\text{each $A \in Y$ is $\beta(e)+\delta$-closed, }\abs{Y} > t \mid H} \leq \paren{1-\delta^{6m}}^t\,.\]
Then we have
\begin{align*}
    \mathbb{P}_p\paren{\text{at least one $A \in Y$ is $\beta(e)+\delta$-open} \mid H} &\geq \mathbb{P}_p\paren{\abs{Y} > t \mid H} - \paren{1-\delta^{6m}}^t\\
    &= \mathbb{P}_p\paren{\abs{Y} > t} - \frac{\epsilon}{2}\\
    &\geq (1-\frac{\epsilon}{2}) - \frac{\epsilon}{2}
\end{align*}
as desired.
\end{proof}

With these lemmas established, the rest of the renormalized construction is identical to the one given in~\cite{GM90}, which we summarize for the case $F = \Z^2$ in the following proposition for future use.

\begin{prop}\label{prop:slabrenorm}
    Let $p > p_c^m.$ Then there are $k,N \in \N$ and a tiling of $\Z^2 \times \set{-k,\ldots,k}$ by $N \times N \times 2k$ boxes so that the set of boxes reached by an $m$-entangled path from the box containing the origin stochastically dominates the connected component of that box in a Bernoulli site percolation on $\Z^2$ with some parameter $q > p_c^{\mathrm{site}}\paren{\Z^2}.$
\end{prop}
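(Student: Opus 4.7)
\begin{sproof}
The plan is to follow the dynamic renormalization of Grimmett--Marstrand \cite{GM90} in the case $F=\Z^2$, using the two key lemmas (Lemma~\ref{lemma:krnbound} and Lemma~\ref{lemma:G|H}) that we have already adapted to the $m$-entangled setting. Fix $p>p_c^m$ and choose $\epsilon,\delta>0$ small (to be determined by the Liggett--Schonmann--Stacey domination step below). Apply Lemma~\ref{lemma:G|H} to obtain parameters $r,n$ with $n>2r>4m$. Using these, tile the slab $\Z^2\times\{-k,\ldots,k\}$ by $N\times N\times 2k$ boxes whose side lengths depend on $n$, where $k$ is taken large enough that each box can house a copy of $\Lambda_n$ and the associated seed locations on its four vertical faces. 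Declare each box a renormalized site, with neighbors given by the standard adjacency in $\Z^2$.

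Next, define renormalized openness via a sequential exploration, starting from the box $V_0$ containing the origin. Placing an initial seed of side $r$ inside $V_0$ (whose edges are all open with probability $p^{|E(\Lambda_r)|}$), we try to extend to each neighboring box in turn: for a neighbor $V'$ we seek a wired $m$-entangled connection from the current wired cluster inside $V_0$ to a position $K(r,n)$ on the shared face that is adjacent to a seed placed in $V'$. Because $r>2m$, once the seed in $V'$ is opened, any wired $m$-entangled connection into it becomes an honest $m$-entangled connection in the full lattice without revealing any edges outside $V_0\cup V'$; this is exactly the locality that the proof of Lemma~\ref{lemma:krnbound} relied on, and it carries over here. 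We then iterate: at each step the function $\beta$ from Lemma~\ref{lemma:G|H} records the (conditional) edge information already exposed on the boundary of the current box, and Lemma~\ref{lemma:G|H} guarantees that, conditional on what has been revealed, the probability of successfully extending into the next box is at least $1-\epsilon$.

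This dynamic exploration produces a subset of $\Z^2$ (the renormalized sites reached from the origin) that stochastically dominates the component of the origin in a site percolation process on $\Z^2$ with marginal probability at least $1-\epsilon$ and bounded range of dependence (determined by $n$ and $r$). Applying the Liggett--Schonmann--Stacey domination theorem, this dependent high-density site process stochastically dominates independent Bernoulli site percolation on $\Z^2$ with some parameter $q=q(\epsilon)\to 1$ as $\epsilon\to 0$. Choosing $\epsilon$ small enough at the outset yields $q>p_c^{\mathrm{site}}(\Z^2)$, which is exactly the conclusion of the proposition.

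The main obstacle, and the reason Lemmas~\ref{lemma:krnbound} and~\ref{lemma:G|H} were formulated the way they were, is controlling the nonlocality of $m$-entanglement during the exploration: a priori, verifying that a witness of an $m$-entangled path is genuinely entangled could require inspecting edges outside the current pair of boxes, which would destroy the independence structure needed for renormalization. The wired boundary condition together with the constraint $r>2m$ on the seed size circumvents this: any minimal wired witness reaching a seed is automatically $m$-entangled in the full lattice once the seed is revealed, so the exploration really only conditions on the $\beta$-encoded boundary of the current box. Once this locality is in hand, the remainder of the Grimmett--Marstrand construction (gluing the successful box extensions into a growing renormalized cluster, and passing from $k$-dependent to independent site percolation) proceeds verbatim.
\end{sproof}
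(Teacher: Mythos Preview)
Your sketch is correct and matches the paper's approach: the paper itself does not give a standalone proof of this proposition but simply states that, once Lemmas~\ref{lemma:krnbound} and~\ref{lemma:G|H} are in hand, ``the rest of the renormalized construction is identical to the one given in~\cite{GM90},'' and your outline is exactly that construction specialized to $F=\Z^2$ with the key locality point (that $r>2m$ makes wired witnesses into genuine $m$-entangled connections once the seed is revealed) correctly identified.

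One small remark: the Liggett--Schonmann--Stacey step is superfluous. The Grimmett--Marstrand dynamic exploration does not produce a finite-range dependent field to which LSS is then applied; rather, Lemma~\ref{lemma:G|H} gives a uniform lower bound on the \emph{conditional} probability of each successful extension given the entire exploration history, and a standard sequential coupling then yields domination of i.i.d.\ Bernoulli($1-\epsilon$) site percolation directly. Your invocation of LSS is not wrong, but framing the output as a ``bounded range of dependence'' process slightly misdescribes the mechanism.
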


This is sufficient to complete the proof of Theorem~\ref{theorem:mslabcontinuity}.

\section{Many Handles in the Intermediate Regime}
\label{sec:genus}
In this section we prove Theorem~\ref{thm:handles}. The idea is to show that $\Lambda_N$ can be split up into $cN^2/\log N$ many $\log N \times k\times 2N$ boxes, a constant fraction of which will be crossed the long way by supercritical $m$-entangled percolation, while $\Lambda_N$ will not be crossed by a connected percolation path. Then the intersection of a plaquette crossing $P'$ with any of these boxes (say, $R$) contributes to the homology of $H_1\paren{P'}.$ Towards this end, we resolve the singularities of $P'\cap R$ to produce a surface that cannot be a sphere and thus must have nontrivial homology (Proposition~\ref{prop:plaquettestosurface}). Recalling the notation in~\eqref{eq:Rboundary} and the paragraph that follows,
in the proof of Proposition~\ref{prop:surfacehandle} we describe a surgery operation on a plaquette crossing of $R$ that produces a surface $S_R \subset R \setminus B_R$ so that $H_1\paren{S_R\cup \partial R}\neq 0$ when $D_1\paren{R}$ is entangled in $B_R.$

We first need a modified version of the deformation retraction given in~\cite{duncan2020homological}. Since we are interested in the $1$-homology of the plaquette surface itself, we would like to ignore the extra $1$-cycles contributed by the $1$-skeleton of $\Z^3$  (recall that, by convention, a $2$-dimensional percolation subcomplex $P$ automatically includes all edges). We therefore define $P^{<2>}$ to be the union of the plaquettes contained in $P.$ The correct topological dual object to $P^{<2>}$ contains not only the dual bonds, but additional higher dimensional cells. 

Consider a general cubical subcomplex $Q \subset \Z^d.$ That is, $Q$ is a union of a set of plaquettes of varying dimensions that is closed under inclusion. Then define $Q^*$ to be the dual complex containing each plaquette of $\paren{\Z^d}^*$ whose dual is not contained in $Q.$ Then we have the following duality result, which generalizes Lemma 7 of ~\cite{duncan2020homological} and may be of independent interest. See Figure~\ref{fig:retraction}.

\begin{figure}
	\includegraphics[width=0.6\textwidth]{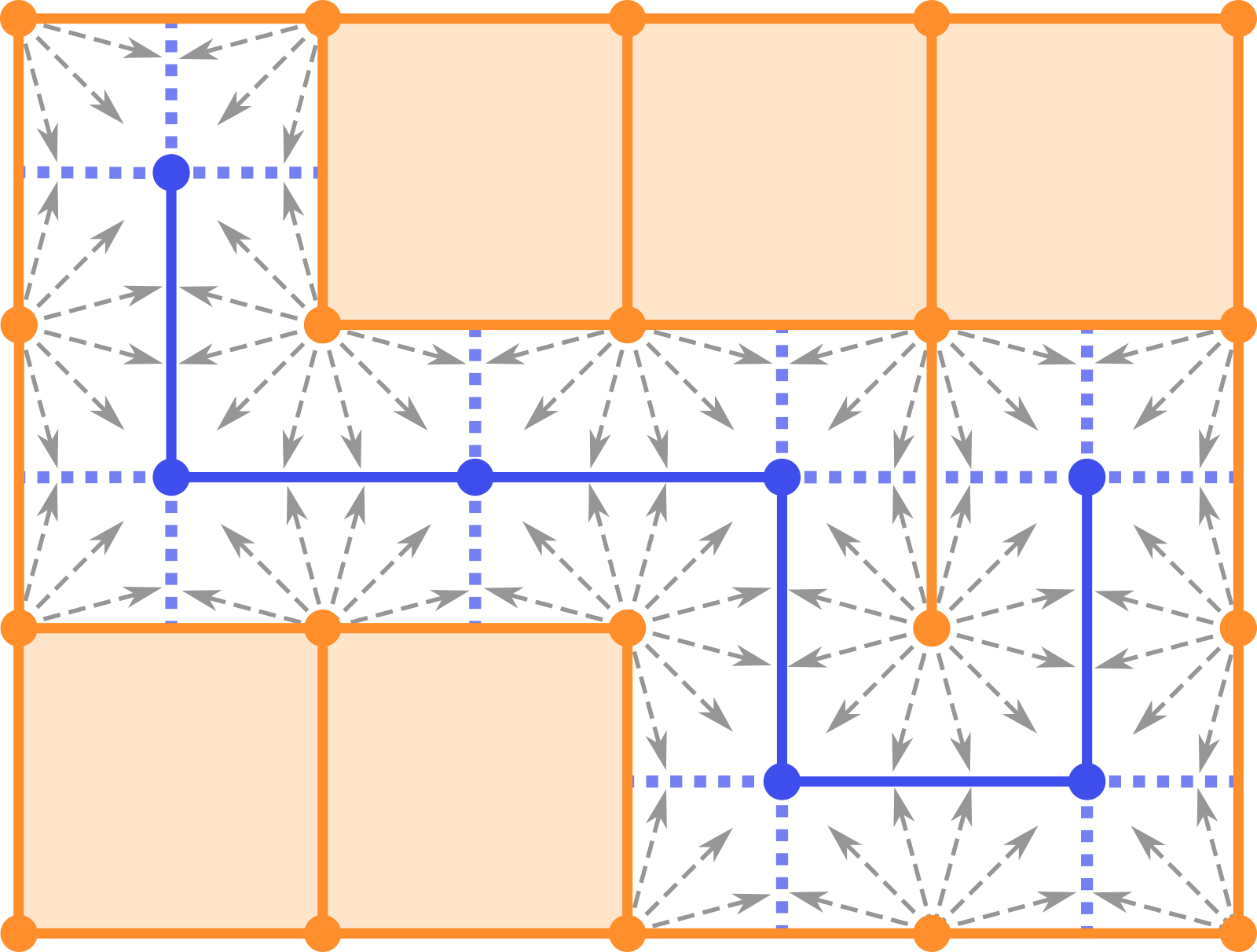}
    \caption{An illustration of the first step of the deformation retraction constructed in Lemma~\ref{lemma:generalretract}. $Q$ is shown in solid blue and $Q^*$ in  orange. The retraction proceeds along the dashed gray arrows from the vertices of $Q^*$ onto the union of $Q$ and a collection of additional half-edges of $\Z^2$ shown by the dotted blue lines. In the next step, the dotted blue half-edges are retracted away from the centers of the orange edges.}  
\label{fig:retraction}
\end{figure}

\begin{lemma}\label{lemma:generalretract}
    For any cubical complex $Q \subset \Z^d,$ $\R^d \setminus Q^*$ deformation retracts onto~$Q.$
\end{lemma}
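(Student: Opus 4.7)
The plan is to construct the deformation retraction in stages, processing the cells of $Q^*$ in order of increasing dimension, following the strategy illustrated in Figure~\ref{fig:retraction}. This generalizes Lemma 7 of~\cite{duncan2020homological}, which treated the special case where $Q$ is a $2$-complex in $\Z^3$; here $Q$ may have cells of every dimension, and each additional dimension requires another stage.

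For notation, write $\Sigma = \set{\sigma \subset \Z^d : \sigma \notin Q}$, so that $Q^* = \set{\sigma^* : \sigma \in \Sigma}$, where $\sigma^*$ is the dual cube meeting $\sigma$ transversely at their common barycenter $b(\sigma) = b(\sigma^*)$. I would construct the retraction in $d$ stages indexed by $k = 0, 1, \ldots, d-1$. At stage $k$, the map handles every dual cell of dimension $k$ in $Q^*$, i.e.\ every $\sigma^*$ with $\sigma \in \Sigma$ of dimension $d - k$. For each such $\sigma$, I would define a local homotopy supported in a small neighborhood of the mixed cross $\sigma \cup \sigma^*$ that radially pushes points on $\sigma^*$ away from $b(\sigma)$ onto $\partial \sigma^*$, while simultaneously pushing points on the interior of $\sigma$ away from $b(\sigma)$ onto $\partial \sigma$. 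A point moved from the interior of $\sigma$ onto a face either lands on a cell of $Q$ (and is fixed thereafter) or lands on a cell of $\Sigma$ of strictly higher codimension, to be processed at a later stage.

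The main technical obstacle is gluing these local retractions into a single continuous deformation of $\R^d \setminus Q^*$ that always avoids $Q^*$. To address this I would work inside the barycentric subdivision of $\R^d$: its simplices are indexed by flags $\sigma_0 \subsetneq \sigma_1 \subsetneq \ldots \subsetneq \sigma_\ell$ of primal cubes, and on each such simplex the stage-$k$ homotopy is canonically specified as an affine motion in barycentric coordinates that fixes vertices corresponding to primal cells and pushes away from vertices corresponding to dual barycenters. This guarantees that the pieces patch continuously along shared faces. The homotopy never enters $Q^*$ during stage $k$ because the radial push is defined on $\sigma^*$ away from $b(\sigma)$, and the neighborhoods used for distinct dual cells of the same dimension are disjoint in the interiors of distinct mixed cells of the barycentric subdivision.

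Finally, after all $d$ stages, every point of $\R^d \setminus Q^*$ lies on a cell of $Q$: each stage strictly increases the codimension of the primal cell in whose relative interior a given point sits, so the point eventually reaches a face in $Q$ (the only alternative, a primal cell in $\Sigma$ remaining after its corresponding stage has been handled, is excluded by construction). Composing the stagewise homotopies then yields the desired deformation retraction.
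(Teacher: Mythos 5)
Your proof follows the same strategy as the paper's: iterate over dimensions from codimension $0$ downward, radially retracting each primal cell $\sigma \notin Q$ away from its barycenter onto its boundary, and use that $Q^*$ is closed under taking faces to guarantee the stages glue continuously and terminate in $Q$ (the final observation being that a $0$-cell $\sigma^0 \notin Q$ lies in $Q^*$, so nothing remains to retract there). The only caveats are that your ``pushing points on $\sigma^*$ onto $\partial\sigma^*$'' is vacuous since $\sigma^* \subset Q^*$ contains no points of $\R^d \setminus Q^*$, and that the paper obtains continuity directly from the downward-closedness of $Q^*$ rather than through the barycentric subdivision, which is a correct but heavier bookkeeping device for the same radial retractions.
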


\begin{proof}
    We perform a series of deformation retractions within $i$-cells of $\Z^d,$ starting from $i=d$ and moving downward in dimension. Consider a $d$-cell $\sigma^d \subset \Z^d.$ Either $\sigma^d \subset Q$ or the dual vertex at the center of $\sigma^d$ is in $Q^*.$ In the former case, we leave $\sigma^d$ fixed, and in the latter case we retract $\paren{\R^d \setminus Q^*} \cap \sigma^d$ to the boundary of $\sigma^d$ via a straight-line radial homotopy from the center point (for an explicit description, see~\cite{duncan2020homological}). The deformation retraction for an $i$-cell $\sigma^i \subset \Z^d$ is similar. If $\sigma^i \subset Q$ we leave it fixed, otherwise the center point of $\sigma^i$ is at the center of the dual cell which is in $Q^*,$ and we can again retract $\paren{\R^d \setminus Q^*} \cap \sigma^i$ to the boundary of $\sigma^i.$ Notice that this is continuous because $Q^*$ is closed under inclusion and so when we retract an $i$-cell it is not contained in a higher dimensional cell. In the case of a $0$-cell $\sigma^0,$ if the dual $d$-cell is in $Q^*$ then again the fact that $Q^*$ is closed under inclusion means that no points of $\R^d \setminus Q^*$ were previously retracted to $\sigma^0,$ so no additional step needs to be taken. It is not difficult to check that this homotopy remains in  $\R^d \setminus Q^*,$ so since we fix $Q$ and remove the interior of each of its missing $i$-cells for $1 \leq i \leq d,$ this gives a deformation retraction onto $Q.$ 
\end{proof}

We now want to turn a plaquette crossing $P'$ of $R$ into a nearby surface in $R \setminus B$ so that we can make use of the classification of surfaces later on. A natural way to avoid singularities caused by plaquettes meeting at a vertex or multiple plaquettes meeting at an edge is to consider an offset of $P',$ or the set of points at a fixed distance from $P'.$ 

Let $F$ be the component of $R \setminus P'$ that contains $D_1^-$ and define 
\[S_{P'}\coloneqq \set{x : d\paren{x,P'} = 1/4} \cap F\,.\]
    
\begin{prop}\label{prop:plaquettestosurface}
    For any plaquette crossing $P'$ of a box $R,$ $S_{P'}$ is a connected surface that separates $D_1.$ Furthermore, if $W$ is the union of $S_{P'}$ and the component of $R \setminus S_{P'}$ that contains $D_1^+$ and $i_* : H_1\paren{S_{P'}} \to H_1\paren{W\cup \partial R}$ and $j_*:H_1\paren{P'}\to H_1\paren{P'\cup \partial R}$ are the maps on homology induced by inclusion then
    \[\rank j_* \geq \rank i_*\,.\]
\end{prop}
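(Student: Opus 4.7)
The first two assertions --- that $S_{P'}$ is a connected surface separating $D_1$ --- follow from a local analysis of the $1/4$-offset of $P'$ restricted to $F$. Near each point $x \in S_{P'}$, a nearest point $\pi(x) \in P'$ lies in the relative interior of a cell of $P'$ of dimension $k \in \set{0,1,2}$, and locally $S_{P'}$ is a piece of a $1/4$-radius sphere, cylinder, or plane respectively. The restriction to a single component $F$ of $R \setminus P'$ selects a single ``sheet'' of the offset at singular edges and vertices of $P'$, so these pieces fit together into a topological $2$-manifold. Connectedness of $S_{P'}$ follows from that of $F$ together with the fact that $F \cap \set{d(\cdot,P') \leq 1/4}$ is a collar of $P' \cap \overline{F}$ in $F$. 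Separation of $D_1$ holds because $D_1^-$ lies in the ``far'' component $F \cap \set{d(\cdot,P') > 1/4}$ of $R \setminus S_{P'}$ while $D_1^+$ lies in the other component.

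For the homology inequality, I plan to build a commutative diagram
\[
\begin{tikzcd}
H_1(S_{P'}) \arrow[r, "i_*"] \arrow[d, "\pi_*"'] & H_1(W \cup \partial R) \arrow[d, "r_*"]\\
H_1(P') \arrow[r, "j_*"'] & H_1(P' \cup \partial R)
\end{tikzcd}
\]
in which $r : W \cup \partial R \to P' \cup \partial R$ is a deformation retraction whose restriction to $S_{P'}$ is the nearest-point map $\pi$. Setting $F' \coloneqq R \setminus (F \cup P')$, the component of $R \setminus S_{P'}$ containing $D_1^+$ is $(F \cap \set{d(\cdot,P') < 1/4}) \cup P' \cup F'$, so $P' \subset W$ and $P' \cup \partial R \subset W \cup \partial R$. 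To build $r$, use the nearest-point projection onto $P'$ on the thin shell $F \cap \set{d(\cdot,P') \leq 1/4}$ --- this sends $S_{P'}$ to $P'$ by $\pi$ --- and a complementary flow that retracts $F'$ onto $P' \cup (\partial R \cap \overline{F'})$ by pushing points toward $P'$, conceptually analogous to the cell-by-cell construction in Lemma~\ref{lemma:generalretract}. Because $r$ is a deformation retraction, $r_*$ is an isomorphism, and commutativity of the diagram yields
\[\rank i_* = \rank(r_* \circ i_*) = \rank(j_* \circ \pi_*) \leq \rank j_*\,.\]

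The main obstacle is ensuring continuity of $\pi$ and $r$ across the singular locus of $P'$ (the $1$-skeleton where several plaquettes meet), since the distance function to $P'$ is only piecewise smooth and the nearest point may not be unique along a cut locus. I would address this by defining $r$ piecewise on neighborhoods of each open $2$-cell, edge, and vertex of $P'$ (inside both the shell and $F'$), using the piecewise-linear structure to fix a consistent choice of projection on each piece, and checking the definitions agree on overlaps in the same spirit as Lemma~\ref{lemma:generalretract}. Verifying the local $2$-manifold structure of $S_{P'}$ at non-manifold edges of $P'$ is likewise delicate, and this is precisely where the restriction to a single component $F$ of $R \setminus P'$ is essential: it selects exactly one of the dihedral/solid-angle regions around each singular cell, ruling out self-intersections of the offset.
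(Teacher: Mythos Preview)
Your treatment of the first two assertions is reasonable; the paper instead invokes weak feature size results of Chazal--Lieutier for the surface claim and a Mayer--Vietoris argument for connectedness, but a direct local analysis should also work.

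The homology argument, however, contains a genuine gap: the deformation retraction $r : W \cup \partial R \to P' \cup \partial R$ does not exist in general. The problematic step is the ``complementary flow that retracts $F'$ onto $P' \cup (\partial R \cap \overline{F'})$''---this asks a compact $3$-dimensional region to retract onto its full topological frontier. Concretely, take $P'$ to be a single flat horizontal rectangle spanning $R$. Then $W$ is a solid half-box and $W \cup \partial R$ is that box glued to the sphere $\partial R$ along a disk, hence homotopy equivalent to $S^2$; on the other hand $P' \cup \partial R$ is $\partial R$ with an equatorial disk attached, homotopy equivalent to $S^2 \vee S^2$. Their second homology groups differ, so no deformation retraction exists and $r_*$ cannot be an isomorphism.

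The paper avoids this by retracting $W \cup \partial R$ only onto the \emph{larger} space $(R \setminus F) \cup \partial R$: it collapses just the thin collar $F \cap \set{d(\cdot,P') \leq 1/4}$ via the cell-by-cell flow of Lemma~\ref{lemma:generalretract} (stopping points when they hit $\partial R$) and leaves $F'$ untouched. This map \emph{is} a homotopy equivalence. The image of $S_{P'}$ under it is $P' \cup V_{P'}$, where $V_{P'} \subset \partial R$ is the strip between $S_{P'}$ and $P'$, not $P'$ alone. The rank inequality then comes from factoring the bottom row of the diagram through $H_1\paren{P' \cup \partial R}$ and observing separately that $P' \cup V_{P'}$ deformation retracts onto $P'$. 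Your square can be repaired by replacing the lower-right corner with $H_1\paren{(R \setminus F) \cup \partial R}$ and inserting this intermediate factorization.
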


\begin{proof}
     A quantity called the weak feature size was introduced in~\cite{chazal2005weak} to quantify how much one can perturb a space without changing its topology. By Proposition 3.6 of~\cite{chazal2005weak}, $P'$ has positive weak feature size since it is piecewise analytic. In fact, one can check that there cannot be singular points of the gradient function defined in within distance $1/4$ of a plaquette subcomplex, so the weak feature size must be at least $1/4.$
     Then by Proposition 3.4 of~\cite{chazal2005weak}, $S_{P'}$ is a surface. It is straightforward to check that $S_{P'}$ contains a connected separating surface, but we show that $S_{P'}$ is connected for completeness.
     
     By Lemma~\ref{lemma:generalretract}, $\R^3 \setminus P'$ deformation retracts onto $\paren{P'}^*.$ Restricting the deformation retraction given in the proof of Lemma~\ref{lemma:generalretract} to $R\setminus P'$ yields a deformation retraction $f:R\setminus P' \times I \to R\setminus P'$ so that $f\paren{R\setminus P',1}=\paren{P'}^*\cap R.$ Since $P'$ is a plaquette crossing $\paren{P'}^*$ has two connected components. Let $T_{1/4} = \set{x\in R : d\paren{x,P'} \leq 1/4}.$ For each point $x\in R\setminus T_{1/4}$ $f\paren{x,t}$ is a path from $x$  to $\paren{P'}^*$ within $R \setminus T_{1/4},$ and so $R \setminus T_{1/4}$ also has two connected components. Now consider part of the Mayer-Vietoris sequence for the decomposition $R = \paren{R \setminus T_{1/4} \cup \partial T_{1/4}} \cup \paren{T_{1/4}}$:
     \[H_1\paren{R} \to H_0\paren{\partial T_{1/4}} \to H_0\paren{\partial T_{1/4} \cup \paren{R \setminus T_{1/4}}} \oplus H_0\paren{T_{1/4}} \to H_0\paren{R} \to 0\,.\]
     Since the sequence is exact, $H_1\paren{R} = 0,$ and $T_{1/4}$ is connected, it follows that $\partial T_{1/4}$ has two connected components. In particular, $S_{P'} = T_{1/4} \cap F$ is connected. 

    Let $V_{P'} \coloneqq \partial \paren{R \setminus \paren{F \cup W}} \cap \partial R$ be the part of $\partial R$ between $S_{P'}$ and $P'.$ We now construct a deformation retraction  $f:W\cup \partial R \to R\setminus F\cup \partial R$ so that $f\mid_{W}$ is a deformation retraction onto $R\setminus F$ and $f\paren{S_{P'}}=P' \cup V_{P'}.$ First, notice that the proof of Lemma~\ref{lemma:generalretract} 
    gives a deformation retraction from $W$ to ${R \setminus F} \cup V_{P'}$ if we stop any point that reaches $\partial R.$ For each cell that intersects $S_{P'},$ the center of that cell is at distance greater than $1/4$ from $P',$ so the radial projection from the center of the cube of $S_{P'}$ onto the boundary is contained in $P' \cup V_{P'}$ or an adjacent lower dimensional cell with the same property that is eventually retracted to $P'\cup V_{P'}.$ Thus, this retraction maps $S_{P'}$ into $P' \cup V_{P'}.$
    
    We extend the map to $W\cup \partial R$ by the identity on $\partial R.$

    Now consider the following commutative diagram, in which the horizontal arrows are the maps induced by inclusions and the vertical arrows are induced by $f$ and its restriction to $S_{P'}$:

    \begin{tikzcd}
        H_1\paren{S_{P'}} \arrow{rr}{} \arrow[swap]{d}{\paren{f \mid_{S_{P'}}}_*} & & H_1\paren{S_{P'} \cup W \cup \partial R} \arrow[swap]{d}{f_*}\\
     H_1\paren{P' \cup V_{P'}} \arrow{r}{}& H_1\paren{P' \cup \partial R} \arrow{r}{}& H_1\paren{\paren{R \setminus F} \cup \partial R}
    \end{tikzcd}

    Since $f$ is a deformation retraction, it induces an isomorphism on homology. Then the rank of the map $H_1\paren{S_{P'}} \to H_1\paren{S_{P'} \cup W \cup \partial R}$ is at most the rank of the map $H_1\paren{P' \cup V_{P'}} \to H_1\paren{P' \cup \partial R}.$ Since $P' \cup V_{P'}$ deformation retracts onto $P'$ by continuing the same construction as above after it reaches $\partial R$, the desired bound follows.
\end{proof}

When we break $\Lambda_N$ into smaller boxes, we will need the following two lemmas describing the intersection of $S_{P'}$ and the smaller boxes.

\begin{lemma}\label{lemma:surfaceintersectionv2}
    Let $P', F$, and $S_{P'}$ be defined with respect to $\Lambda_N$ as in Proposition~\ref{prop:plaquettestosurface}. Then for any box $R = \brac{a,b}\times\brac{c,d}\times \brac{-N,N},$ with $-N \leq a < b \leq N, -N\leq c < d \leq N$ integers, $S_{P'} \cap \partial R$ is a disjoint union of finitely many simple closed curves.
\end{lemma}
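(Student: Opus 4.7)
The plan is to show that $S_{P'}$ meets $\partial R$ transversely at every point of intersection, which will imply that $S_{P'}\cap\partial R$ is a compact $1$-manifold; the classification of compact $1$-manifolds will then give the desired disjoint union of simple closed curves.

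First I would decompose $S_{P'}$ into its natural smooth pieces. The underlying level set $L = \set{x \in \Lambda_N : d\paren{x, P'} = 1/4}$ consists of (a) flat rectangles parallel to and offset by $1/4$ from plaquettes of $P'$; (b) portions of right circular cylinders of radius $1/4$ whose axes are axis-aligned edges of $\Z^3$ contained in $P'$; and (c) portions of spheres of radius $1/4$ centered at vertices of $\Z^3$ contained in $P'$. Proposition~\ref{prop:plaquettestosurface} (together with the weak feature size argument) guarantees that these pieces fit together continuously into a $2$-manifold.

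Next I would verify transversality face by face. Each face of $\partial R$ lies in an integer coordinate plane $\set{x_i = n}$. A flat piece in $\set{x_j = k \pm 1/4}$ is parallel to and disjoint from such a plane when $i=j$ (since $\abs{n - \paren{k \pm 1/4}} \ge 3/4 > 0$) and transverse when $i \neq j$. A cylinder of radius $1/4$ about a lattice axis parallel to the face is either disjoint (when the axis is off the face, hence at integer distance at least $1$ from it) or meets the face in two parallel line segments (when the axis lies in the face); a cylinder whose axis is perpendicular to the face meets it transversely in a single circle. A sphere centered at a lattice vertex is disjoint from the face unless the vertex lies on the face, in which case the intersection is a great circle, and a direct computation comparing tangent planes verifies transversality in each case. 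The intersections on adjacent smooth pieces of $S_{P'}$ glue continuously along their shared boundary arcs because $S_{P'}$ is itself $C^0$ at those boundaries, and intersections on adjacent faces of $\partial R$ glue along the edges of $R$ for the same reason.

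The main subtlety is the treatment of the top and bottom faces of $R$, which lie on $\partial\Lambda_N$. Since $P'$ separates the top and bottom of $\Lambda_N$, the top face of $\Lambda_N$ lies in a different component of $\Lambda_N \setminus P'$ than $F$, so $S_{P'}$ cannot meet it. On the bottom face, $S_{P'}$ could in principle produce an intersection with loose endpoints wherever $S_{P'}$ terminates on $\partial\Lambda_N$, but the half-cylindrical pieces of $S_{P'}$ that wrap around the bottom edges of any vertical plaquettes of $P'$ reaching $z = -N$ automatically close such endpoints into loops; concretely, the intersection of $S_{P'}$ with the bottom face is the boundary in that face of the $1/4$-neighborhood of the trace of $P'$ there, a disjoint union of ``stadium-shaped'' simple closed curves. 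I expect this wrap-around verification to be the main obstacle, but it reduces to the local analysis above. Since $\partial R$ is compact and $S_{P'} \cap \partial R$ is a $1$-manifold without boundary, it is a finite disjoint union of simple closed curves.
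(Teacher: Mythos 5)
Your overall approach---decomposing $S_{P'}$ into its flat, cylindrical, and spherical pieces, checking transversality of each piece with each face of $\partial R$, and invoking the classification of compact $1$-manifolds---is the same as the paper's, just carried out more explicitly. The casework for the three piece types is correct, and the separate treatment of the top and bottom faces of $R$ (which lie in $\partial\Lambda_N$, where $P'$ has no plaquettes) is a real point that the paper leaves implicit.

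The genuine gap is at the edges of $R$. Transversality with the open faces of $\partial R$ shows that $S_{P'}\cap(\text{each face})$ is a $1$-manifold \emph{with boundary}, the boundary lying on the edges of $R$. To upgrade this to ``$S_{P'}\cap\partial R$ is a closed $1$-manifold'' you must check that, at each point where $S_{P'}$ meets an edge of $R$, the boundary arcs on the two adjacent faces pair up. You assert this happens ``for the same reason'' (because $S_{P'}$ is $C^0$), but continuity alone does not give it: a surface can be $C^0$ across a dihedral angle and still meet the boundary of the dihedral in something that is not locally a $1$-manifold, e.g.\ if it is tangent to the edge direction or passes through a corner. This is precisely what the paper's second sentence is about---it constrains the plaquettes of $P'$ that can meet an edge of $R$ (and rules out corner incidences) so that the local picture of the offset near such an edge forces the arcs to pair. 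That verification is missing from your argument. The same issue infects your description of the bottom face: the ``stadium-shaped'' curves are indeed closed in the full plane $\set{z=-N}$, but the bottom face of $R$ is a proper subrectangle, so those curves may be cut off at the horizontal edges of $R$, leaving loose endpoints that again need the edge-matching step.
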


\begin{proof}
Since the plaquettes of $P'$ adjacent to $\partial R$ are orthogonal to $\partial R,$ an offset of $P'$ intersects $\partial R$ transversely and therefore $S_{P'} \cap \partial R$ is a disjoint union of curves. It is easily checked that the paths reaching the edges of the box meet up to form simple closed curves, as each point in both $P'$ and an edge of $R$ must be contained in a single plaquette of $P'$ that is orthogonal to both faces meeting at the edge (and no plaquette of $P'$ is adjacent to any of  the eight corner vertices of $R$).
\end{proof}

\begin{lemma}\label{lemma:subsurface}
    Let $\Lambda_N = \cup_{i=1}^M R_i,$ where each $R_i$ meets the hypotheses of the preceding lemma, and the $R_i$'s have disjoint interiors. If $P'$ separates  $D_1\paren{\Lambda_n}$ then for each $i,$ $S_{P'} \cap R_i$ is a disjoint union of finitely many surfaces, at least one of which separates the top of $R_i$ from the bottom. 
\end{lemma}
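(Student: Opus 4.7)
The plan has two main components: first, a topological characterization showing $S_{P'} \cap R_i$ is a compact $2$-manifold with boundary; second, a combinatorial tree argument on the components of $R_i \setminus (S_{P'} \cap R_i)$ to find a single $C_{j_0}$ that separates the top from the bottom of $R_i$.

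For the structural claim, $S_{P'}$ is a compact connected $2$-manifold by Proposition~\ref{prop:plaquettestosurface}, and applying the argument of Lemma~\ref{lemma:surfaceintersectionv2} (which by symmetry also applies to any face of $R_i$ after relabeling axes) to each of the six faces shows $S_{P'} \cap \partial R_i$ is a disjoint union of simple closed curves meeting $\partial R_i$ transversely. Thus $S_{P'} \cap R_i$ is a compact $2$-manifold with boundary, hence a finite disjoint union of connected components $C_1, \ldots, C_k$. To see $S_{P'} \cap R_i$ separates the top face $T$ from the bottom face $B$ of $R_i$, note that each $R_i$ has $z$-range $[-N,N]$, so $T \subset \set{z=N}$ and $B \subset \set{z=-N}$; the description of $W$ in Proposition~\ref{prop:plaquettestosurface} shows that $T$ and $B$ lie on opposite sides of $S_{P'}$ in $\Lambda_N$, so any path in $R_i$ from $T$ to $B$ is a path in $\Lambda_N$ that must cross $S_{P'} \cap R_i$.

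The main step is to identify a single $C_{j_0}$ that separates $T$ from $B$. Since $R_i \cong \ball^3$ is simply connected and $H_2\paren{R_i, \partial R_i} = 0$, each connected orientable $C_j$ with boundary on $\partial R_i$ represents the trivial relative class, and hence separates $R_i$ into exactly two pieces. Form the graph $G$ whose vertices are the connected components of $R_i \setminus \bigsqcup_j C_j$ and whose edges are the $C_j$'s, with each edge joining the two pieces it separates. The graph $G$ is connected because $R_i$ is path-connected, and is acyclic: any cycle would yield a loop $\gamma$ in the interior of $R_i$ crossing each edge $C_j$ of the cycle exactly once, which by simple connectivity bounds a disk $D$ in $R_i$ (taken transverse and pushed into the interior to avoid $\partial R_i$ and $\partial C_j$); then $D \cap C_j$ would be a compact $1$-manifold whose boundary on $\partial D = \gamma$ consists of a single point, contradicting that compact $1$-manifolds have boundary of even cardinality. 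Hence $G$ is a tree. Let $W^*$ and $L^*$ be the components containing $T$ and $B$ respectively, which differ by the separation established above. Any edge $C_{j_0}$ on the unique path from $W^*$ to $L^*$ in $G$ has the property that removing $C_{j_0}$ splits the tree into two subtrees, with $W^*$ on one side and $L^*$ on the other; the other $C_j$'s are disjoint from $C_{j_0}$ and each lies on one specific side of $C_{j_0}$, so the two subtrees correspond to the two components of $R_i \setminus C_{j_0}$. Thus $T \subset W^*$ and $B \subset L^*$ are in different components of $R_i \setminus C_{j_0}$.

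The main subtlety is the acyclicity of $G$, requiring careful transversality and genericity to ensure the disk $D$ and loop $\gamma$ lie in the interior of $R_i$ and avoid $\partial C_j$. A secondary technicality arises when $\partial C_j$ meets $T$ or $B$ (possible when $P'$ contains vertical plaquettes adjacent to $\partial \Lambda_N$), in which case $T$ or $B$ may intersect several components of $R_i \setminus \bigsqcup_j C_j$; this can be handled by taking $W^*$ and $L^*$ to be the components containing the interior bulk of $T$ and $B$ away from the finitely many boundary curves, and then verifying via a local perturbation argument that the same $C_{j_0}$ still separates the full faces.
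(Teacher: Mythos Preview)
Your argument is correct, but considerably more elaborate than the paper's. The paper dispatches the lemma in two sentences: it observes that if no connected piece of $S_{P'}\cap R_i$ separates top from bottom then there is a path in $R_i\setminus S_{P'}$ joining the two components of $D_1(\Lambda_N)$, contradicting the fact that $S_{P'}$ separates them in $\Lambda_N$; finiteness of the pieces follows because $S_{P'}$ is connected and meets $\partial R_i$ in finitely many simple closed curves (Lemma~\ref{lemma:surfaceintersectionv2}). The paper does not spell out why the nonexistence of a single separating component forces the whole union to fail to separate; implicitly this is the $\Z_2$-intersection-number observation that any path from top to bottom has total crossing parity $1$ with $S_{P'}\cap R_i$, so some connected $C_{j_0}$ has crossing parity $1$ with every such path (well-defined since $R_i$ is simply connected), and hence separates.

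Your tree argument recovers the same conclusion by a different and entirely self-contained route: you use $H_2(R_i,\partial R_i)=0$ to see that each $C_j$ divides $R_i$ into exactly two regions, build the dual graph on the complementary regions, prove it is a tree via a disk/transversality parity obstruction, and then pick any edge on the unique tree-path from the top region to the bottom region. This buys you an explicit $C_{j_0}$ and makes all the topology visible, at the cost of handling several transversality and boundary technicalities that the paper's one-line contrapositive avoids. Both approaches are valid; yours is longer but more transparent, while the paper's relies on the reader supplying the parity step. Your worry about $\partial C_j$ meeting the top or bottom face of $R_i$ is unnecessary here: those faces lie at $z=\pm N$, while $S_{P'}$ sits at distance $1/4$ from the plaquette crossing $P'$, which contains no $2$-cells of $\partial\Lambda_N$, so $S_{P'}$ stays away from the top and bottom faces of $\Lambda_N$ and hence of each $R_i$.
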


\begin{proof}
    If $S_{P'}  \cap R_i$ does not contain a separating surface, then the two components of $D_1\paren{\Lambda_N}$ are connected by a path in $R_i \setminus S_{P'},$ so $S_{P'}$ cannot separate the two components of $D_1\paren{\Lambda_N}.$

    By Lemma~\ref{lemma:surfaceintersectionv2}, $S_{P'} \cap \partial R_i$ is a finite union of simple closed curves. Since $S$ is connected, the number of components of $S_{P'}  \cap R_i$ is at most the number of such curves.
\end{proof}

We are now ready to prove the main topological ingredient required for Theorem~\ref{thm:handles}. 

\begin{prop}\label{prop:surfacehandle}
Let $P'$ be a plaquette crossing of $\Lambda_N,$ let $W$ be as in Proposition~\ref{prop:plaquettestosurface} (with $\Lambda_N$ playing the role of $R$ in that statement), and let $R$ be as in Lemma~\ref{lemma:surfaceintersectionv2}. Suppose $B$ contains an entangled crossing of $R.$ Then the map on homology $H_1\paren{S_{P'}\cap R}\to H_1\paren{\paren{W\cap R} \cup\partial R}$ induced by inclusion is nontrivial.

\end{prop}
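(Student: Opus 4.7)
The plan is to argue by contradiction. Suppose the inclusion-induced map $i_* : H_1(S_{P'} \cap R) \to H_1((W \cap R) \cup \partial R)$ is the zero map. The goal is to produce a PL embedded 2-sphere in $\R^3 \setminus B_R$ that separates $D_1(R)$, which contradicts the hypothesis that $B_R$ contains an entangled crossing via the equivalence $(3)\Leftrightarrow(4)$ of Proposition~\ref{prop:duality} applied to the sub-box $R$.

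First build a candidate separating surface. By Lemma~\ref{lemma:subsurface} applied to a decomposition of $\Lambda_N$ containing $R$, the intersection $S_{P'} \cap R$ contains a connected component $S^*$ that separates the top face of $R$ from the bottom face; by Lemma~\ref{lemma:surfaceintersectionv2} its boundary $\partial S^* \subset \partial R$ is a finite disjoint union of simple closed curves. Let $R_{\pm}$ denote the components of $R \setminus S^*$ containing $D_1^{\pm}(R)$ and let $A_{\pm} = \bar R_{\pm} \cap \partial R$. Then $\Sigma \coloneqq S^* \cup A_-$ is a closed orientable $2$-manifold in $\R^3 \setminus B_R$ that separates $D_1^-(R)$ from $D_1^+(R)$ in $\R^3$ (using that $B_R$ is disjoint from both $S_{P'}$ and $\partial R$); after a small PL perturbation we may assume $\Sigma$ is piecewise--linear. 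If $\Sigma$ is already a $2$-sphere we are done; otherwise $\Sigma$ has positive genus and we reduce it by a sequence of handle compressions.

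For each handle generator carried by a non-separating simple closed curve $\alpha \subset S^*$, the contradiction hypothesis says that $\alpha$ bounds a $2$-chain in $(W \cap R) \cup \partial R$. A PL general--position argument pushes this chain off the $1$-complex $B_R$, so $\alpha$ is null-homologous in $((W \cap R) \cup \partial R) \setminus B_R$. Invoking Dehn's Lemma (Theorem~\ref{lemma:dehn}) then produces an embedded PL disk $D_{\alpha}$ disjoint from $B_R$ with $\partial D_{\alpha} = \alpha$. Compressing $\Sigma$ along $D_{\alpha}$ --- cut $\Sigma$ open along $\alpha$ and cap with two parallel copies of $D_{\alpha}$ --- yields a closed surface of strictly smaller genus, still contained in $\R^3 \setminus B_R$ and still separating $D_1^-(R)$ from $D_1^+(R)$. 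Finitely many iterations reduce $\Sigma$ to a PL embedded sphere, which completes the contradiction.

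The main obstacle is the application of Dehn's Lemma: its hypothesis requires $\alpha$ to bound a continuous image of a disk, i.e.\ to be null-homotopic, whereas our assumption only provides null-homology. Closing this gap is where the bulk of the work lies; the natural tool is the Loop Theorem of Papakyriakopoulos applied to the $3$-manifold $\bar R_+$ bounded (in part) by $S^*$, whose hypothesis can be extracted from the $H_1$ statement once one controls $\pi_1$ of $(W \cap R) \cup \partial R$ relative to $\partial R$. An alternative, potentially cleaner, route is to apply Proposition~\ref{prop:plaquettestosurface} to a minimal plaquette crossing $P'' \subseteq P' \cap R$ of $R$, thereby trading the statement about $S_{P'} \cap R$ for one about $H_1(P'') \to H_1(P'' \cup \partial R)$ that can be attacked directly using Proposition~\ref{prop:duality}.
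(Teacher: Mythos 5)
Your proposal takes a genuinely different route from the paper, but it has a significant gap that you only partially acknowledge, and I believe the argument cannot be completed as sketched.

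The central problem is the step where you assert that a PL general-position argument pushes the $2$-chain bounding $\alpha$ off the $1$-complex $B_R$. This is false in general: the ambient space $(W \cap R) \cup \partial R$ contains bonds of $B_R$ (indeed $W$ contains $P'$ and much of the region above it, through which dual bonds pass). A generic $2$-chain in a $3$-manifold meets a $1$-complex transversely, i.e.\ in finitely many points --- it is not automatically disjoint from it. Making the chain disjoint would require the algebraic intersection number of the chain with each bond to vanish, which is equivalent to $\alpha$ being null-homologous in the \emph{complement} of $B_R$; that is precisely what you are trying to prove, not something you may assume. The subsequent Loop Theorem / Dehn's Lemma step compounds this: the $3$-manifold $\overline{R_+}$ to which you want to apply the Loop Theorem contains bonds of $B_R$ in its interior, so the theorem would only produce a disk in $\overline{R_+}$, not one disjoint from $B_R$. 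Finally, even granting an embedded disk $D_\alpha$ disjoint from $B_R$, a compression along it also requires $D_\alpha$ to have interior disjoint from $\Sigma$, which requires a further innermost-circle argument you do not carry out.

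The paper sidesteps all of this by \emph{growing} the surface rather than compressing it. Rather than trying to fill handle generators with disks inside $R$ (where the disks would have to dodge $B_R$), they attach disks \emph{outside} $R$, where $B_R$ has no bonds by construction, along with pieces of $\partial R.$ The boundary components of $S_R$ in $D_2$ are classified as trivial or generating in $H_1(D_2)$; trivial ones are capped inside or outside, and generating ones are paired and capped below the box. Each operation attaches $2$-cells, so the maps $H_1(S_R) \to H_1(S_R')$ and $H_1(W_R) \to H_1(W_R')$ remain surjective, and a commutative diagram passes the nontriviality back from $S_R'$ to $S_R.$ The payoff is that the final $S_R'$ is a \emph{closed} surface equal to $\partial W_R'$, separating $D_1$ in $\R^3 \setminus B_R$; the entangled-crossing hypothesis then forces $S_R'$ to have positive genus, so $W_R'$ is a handlebody, $H_1(S_R') \to H_1(W_R')$ is onto a nontrivial group, and $H_1(W_R') \to H_1(W_R' \cup \partial R)$ is injective because each boundary loop was killed by an attached disk. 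Your approach would require overcoming the intersection obstruction with $B_R$ head-on, and it is not clear this can be done without essentially recreating the paper's growth construction.
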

\begin{proof}
Since $W$ is compact, $\paren{W\cap R}$ breaks up into finitely many disjoint components; let $W_R$ be the component containing $D_1^+\paren{R}.$ For notational simplicity, define $S_R \coloneqq \overline{\partial W_R\setminus \partial R}$ and let $W_R^c \coloneqq R \setminus \paren{\partial R \cup S_R \cup W_R}.$ Then $S_R$ is a surface crossing of $R.$ Since $W_R$ and $S_R\subset S_{P'}\cap R$ are disconnected from the rest of $W\cap R$ and $S_{P'}\cap R,$ respectively, it suffices to show that the induced map on homology $H_1\paren{S_R}\to H_1\paren{W_R\cup \partial R}$ is nontrivial. Our strategy will be to grow $S_R$ and $W_R$ so that the larger sets $S_R'$ and $W_R'$ satisfy $S_R' = \partial W_R'$ and so that the relevant maps on homology induced by inclusion are surjective.

\begin{figure}
	\includegraphics[height=0.4\textwidth]{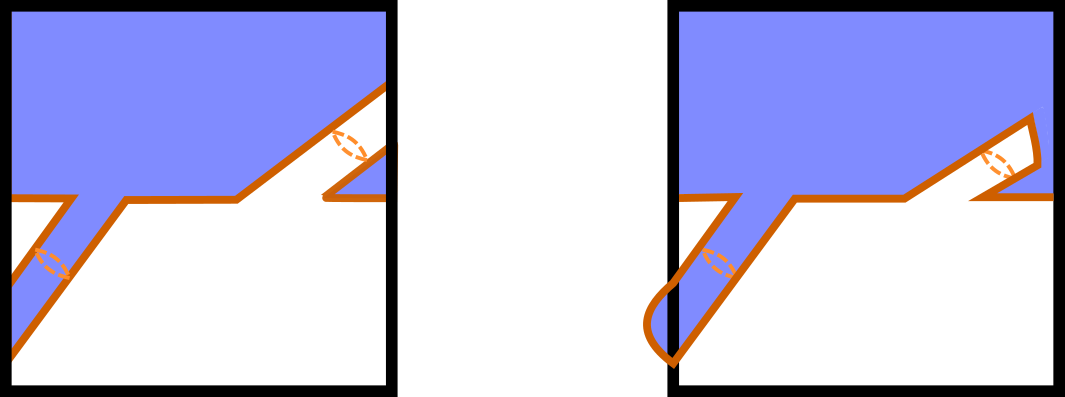}
    \caption{An illustration of the first surgery operation in the proof of Proposition~\ref{prop:surfacehandle}, shown in a cross-section. $S_R$ is depicted in orange and $W_R$ is depicted in blue on the left, whereas $S_R'$ and $W_R'$ are shown in the corresponding colors on the right. The dotted orange lines indicate thin tubes that are bounded by $S$ outside of the cross-section. $S_R$ is obtained from a plane separating $D_1$ by adding two thin tubes, each connecting one half of the box to the boundary of the box in the other half. $S_R$ is modified to yield $S_R'$ by pushing the end of one tube slightly outside of the box and pushing the other slightly inside.}
\label{fig:surgerytype1}
\end{figure}

We now describe two operations that simplify $\partial S_R=S_R\cap \partial R,$ which are illustrated in Figures~\ref{fig:surgerytype1} and~\ref{fig:surgerytype2}. $\partial S_R$ is transverse to $D_2=D_2\paren{R}$ and does not intersect $D_1=D_1\paren{R}.$ As such $\partial S_R$ is a union of disjoint closed curves contained in $D_2.$  We first remove components of $\partial S_R$ that represent trivial elements of $H_1\paren{D_2}.$ If at least one component of $\partial S_R$ represents a trivial element of $H_1\paren{D_2},$ then it contains at least one curve $\gamma$ that bounds a disk $\mathcal{D}$ of $D_2\setminus S_R.$ In this case, set $S_R'' \coloneqq S_R\cup \mathcal{D}.$ If $\mathcal{D}$ separates $W_R$ from the exterior of $R,$ we obtain  

 obtain $S_R'$ from $S_R''$ by pushing $\mathcal{D}$ a small distance outside of $R$ so that $S_R'$ remains a surface transverse to $\partial R.$ Otherwise, we push $\mathcal{D}$ a small distance inside of $R$ to find a surface $S_R'$ that does not intersect $\partial R$ in a neighborhood of $\mathcal{D}.$ 

Then set $W_R'$ to be the closure of the union of $W_R$ and all bounded connected components of $\R^3 \setminus \paren{\partial R \cup \paren{S_R' \setminus R}}$ whose closures intersect $W_R.$ 

$S_R'$ is a space obtained by attaching a $2$-cell to $S_R.$ 
$W_R'$ is either homeomorphic to $W_R,$ or $W_R'$ is homotopy equivalent to $W_R$ with some number of disks attached. In either case, the maps $H_1\paren{S_R} \to H_1\paren{S_R'}$ and $H_1\paren{W_R} \to H_1\paren{W_R'}$ induced by inclusion are surjective.

After iteratively applying this procedure to $S_R,$ eventually all remaining boundary components of the resulting surface $S_R'$ are nontrivial in $H_1\paren{D_2}$; they each must generate $H_1\paren{D_2}$ because they are simple loops. We now describe the operation performed if all of the loops of $S_R$ are of this form. Observe that in this case $\partial R\setminus S_R$ is the disjoint union of two disks --- one containing $D_1^+$ and the other containing $D_1^-$ --- and a (possibly empty) finite collection of circular strips encircling $D_2.$ The spherical cap at the top is contained in $\partial W$ and the adjacent strip is not contained in $W.$ This pattern alternates until one reaches the bottom spherical cap, which is not contained in $\partial W.$ As such, $\partial R\cap S_R$ must contain an odd number of components. Also, $S_R$ is obtained from $\partial W$ by removing a finite collection of circular strips. In the example illustrated in Figure~\ref{fig:tworooms}, $R\setminus S_R$ is the union of two spherical caps and two circular strips.

In the case that there are at least $3$ loops in $\partial R\cap S_R$, we attach disks to two of them at a time. Let $X \subset \partial W_R$ be a strip with $\partial X = \gamma_1 \sqcup \gamma_2 \subset S_R.$ Then we add two disjoint disks $\mathcal{D}_1,\mathcal{D}_2$ below the outside of $R$ attached along $\gamma_1$ and $\gamma_2$ respectively. Now define $S_R' \coloneqq S_R \cup \mathcal{D}_1 \cup \mathcal{D}_2$ and define $W_R'$ to be the union of $W_R$ and the closure of the bounded component of $\R^3 \setminus \paren{X \cup \mathcal{D}_1 \cup \mathcal{D}_2}.$ This has the effect of removing $X$ from $\partial W_R$ and replacing it with $\mathcal{D}_1 \cup \mathcal{D}_2.$ See Figure~\ref{fig:surgerytype2}. If there is only one loop, then $\partial W_R \setminus S_R$ is homeomorphic to a disk, so we set $W_R' \coloneqq W_R$ and $S_R' \coloneqq \partial W_R.$

\begin{figure}
	\includegraphics[height=0.4\textwidth]{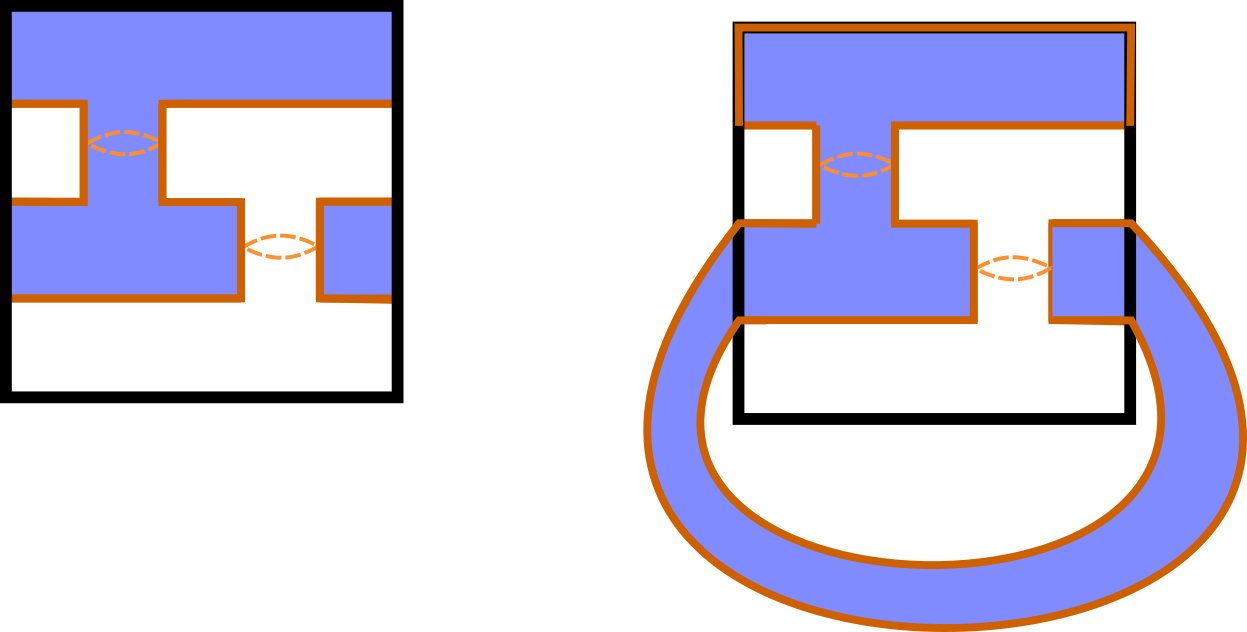}
    \caption{An illustration of the effect of the surgery on the example from Figure~\ref{fig:tworooms}, shown in a cross-section. $S_R$ is shown in orange and $W_R$ is shown in blue on the left, whereas $S_R'$ and $W_R'$ are shown in the corresponding colors on the right. The dotted orange lines indicate thin tubes that are bounded by $S$ outside of the cross-section. $W_R$ is a solid torus and $S_R$ is a cylinder with a disk removed. The homology $H_1\paren{S_R}$ is nontrivial, but the induced map on homology $H_1\paren{S_R}\to H_1\paren{S_R\cup\partial R}$ is not. 
     $S_R'=\partial W_R'$ is a two-sphere obtained from $S_R$ by attaching three disks: two outside the box and the cap on top of the box. $W_R'$ is a three-ball obtained from $W_R$ by attaching a thickened disk around a circular strip.}
\label{fig:surgerytype2}
\end{figure}

We now consider the effect of these operations on the topology of the complex. We claim that for any of the above operations, if $H_1\paren{S_R'} \to H_1\paren{W_R' \cup S_R' \cup \partial R}$ is nontrivial, so is $H_1\paren{S_R} \to H_1\paren{W_R \cup S_R \cup \partial R}.$ To see this, notice that $S_R'$ and $W_R'$ are up to homotopy equivalence always constructed by attaching some number (possibly zero) of disks to $S_R$ and $W_R$ respectively. We therefore have the following commutative diagram
\[\begin{tikzcd}
	{H_1\paren{S_R}} & {H_1\paren{W_R}} & {H_1\paren{W_R \cup \partial R}} \\
	{H_1\paren{S_R'}} & {H_1\paren{W_R'}} & {H_1\paren{W_R' \cup \partial R}}
	\arrow[from=1-1, to=1-2]
	\arrow[from=1-1, to=2-1]
	\arrow[from=1-2, to=1-3]
	\arrow[from=1-2, to=2-2]
	\arrow[from=1-3, to=2-3]
	\arrow[from=2-1, to=2-2]
	\arrow[from=2-2, to=2-3]
\end{tikzcd}\,,\]
in which all vertical maps are surjective. 

Abusing notation slightly, let $S_R'$ and $W_R'$ be the final sets obtained after applying these operations in sequence to the original sets $S_R$ and $W_R.$ By construction $\partial W_R' = S_R'.$ All that remains is to show that $H_1\paren{S_R'} \to H_1\paren{W_R' \cup S_R' \cup \partial R}$ is nontrivial. Since $S_R$ separates $D_1$ in $R \setminus B_R,$ one can check that the operations described above result in a surface $S_R'$ that  separates $D_1$ in $\R^3 \setminus B_R.$

Then since $B_R$ contains an entangled crossing, $S_R'$ cannot be a sphere, so by the classification of surfaces it must be homeomorphic to a connected sum of tori. It follows that $W_R'$ is homeomorphic to a connected sum of solid tori and that $H_1\paren{S_R'} \to H_1\paren{W_R'}$ is surjective and in particular nontrivial. Finally, $\partial R \setminus W_R'$ consists of the union of the bottom of $\partial R,$ which is homeomorphic to a disk, and some number of strips in $D_2 \cap \partial W_R$ between loops of $\partial S_R.$ Since we have attached a disk along each loop of $\partial W_R\cap R,$ each of these loops is nullhomologous in $H_1\paren{W_R'}$ and so the map $H_1\paren{W_R'} \to H_1\paren{W_R' \cup \partial R}$ is injective, completing the proof.
\end{proof}

With the topology worked out, we now need to give a lower bound on the probability of an entangled crossing of a thin box in the long direction. Let $R_{N,a,k} = \brac{0,a\log N} \times \brac{0,k} \times \brac{-N,N}$ and let $H_{N,a,k}$ be the event that there is a $m$-entangled crossing of $R_{N,a,k}.$ 

\begin{lemma}\label{lemma:thincrossing}
    Fix $m \in \N.$  Then for any $p > p_c^{\lim}$ there is a $k = k\paren{m} \in \N$ and a $c\paren{p,m} > 0$ so that
    \[\inf_n \mathbb{P}_p\paren{H_{N,c,k}} > 0\,.\]
\end{lemma}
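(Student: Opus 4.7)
The plan is to reduce, via the slab renormalization of Proposition~\ref{prop:slabrenorm}, to a classical supercritical two-dimensional rectangle-crossing estimate. First note that although $m$ appears fixed in the statement, the conclusion is vacuous unless $p > p_c^m$, and such $m$ exist since the $p_c^{m'}$ decrease to $p_c^{\lim}$; I assume $m$ has been chosen this way. Orient coordinates so that the thin direction of the slab is the $y$-axis, and apply Proposition~\ref{prop:slabrenorm} to obtain integers $k_0 = k_0\paren{m}$ and $N_0 = N_0\paren{m,p}$ together with a tiling of $\Z \times \brac{0,2k_0} \times \Z$ by $N_0 \times 2k_0 \times N_0$ boxes (indexed by $\Z^2$ in the $xz$-plane) whose reachability via $m$-entangled paths from a source stochastically dominates the open cluster of that source in a Bernoulli site percolation on $\Z^2$ at some parameter $q = q\paren{p,m} > p_c^{\mathrm{site}}\paren{\Z^2}$. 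I set $k \coloneqq 2k_0$, fixing $k = k\paren{m}$ as required.

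Next I align $R_{N,c,k}$ with the tiling so that it contains approximately $\lfloor c\log N / N_0\rfloor$ renormalized columns in the $x$-direction and $\lfloor 2N/N_0\rfloor$ renormalized rows in the $z$-direction, with the top and bottom $z$-faces of $R_{N,c,k}$ coinciding with the corresponding faces of renormalized boxes. A top-to-bottom crossing (in the $z$-direction) of this renormalized rectangle in the dominated site model then yields an $m$-entangled crossing of $R_{N,c,k}$ by stringing together the $m$-entangled bond sets witnessing consecutive renormalized adjacencies, which all lie inside the slab. It therefore suffices to bound below, uniformly in $N$, the probability of a long-way crossing of an $\lfloor c\log N/N_0\rfloor \times \lfloor 2N/N_0\rfloor$ rectangle in supercritical Bernoulli site percolation on $\Z^2$ at parameter $q$.

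For this I would invoke the standard supercritical thin-rectangle crossing estimate: for $q > p_c^{\mathrm{site}}\paren{\Z^2}$ there exist $C = C\paren{q}, \lambda = \lambda\paren{q} > 0$ with
\[\mathbb{P}_q\paren{\text{no long-way crossing of } \brac{0,M}\times\brac{0,L}} \leq CL\exp\paren{-\lambda M}\,,\]
a consequence of exponential decay of the subcritical dual cluster radius together with a union bound over potential dual cutsets. Taking $c = c\paren{p,m}$ with $\lambda c / N_0 > 1$ and substituting $M = \lfloor c\log N/N_0\rfloor$ and $L = \lfloor 2N/N_0\rfloor$ yields $CL\exp\paren{-\lambda M} = O\paren{N^{1-\lambda c/N_0}} \to 0$, so the long-way crossing probability exceeds $1/2$ for all sufficiently large $N$. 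The finitely many small-$N$ cases are immediate, since the crossing event has positive probability for each fixed $N$.

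The main obstacle is organizational rather than conceptual: one must check that the alignment of $R_{N,c,k}$ with the renormalized tiling does not cut off the first and last renormalized boxes in the $z$-direction, so that the concatenated $m$-entangled paths genuinely reach both the top and bottom $z$-faces of $R_{N,c,k}$ (and not merely sites slightly inside them). This is easily arranged at the cost of adjusting $c$ by a constant factor and, if necessary, thickening the slab slightly (i.e., increasing $k_0$), both of which are absorbed into the dependence of the final constants on $p$ and $m$.
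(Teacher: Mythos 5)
Your proposal is correct and follows essentially the same route as the paper: reduce via Proposition~\ref{prop:slabrenorm} to a long-way crossing of a $\Theta(\log N)\times\Theta(N)$ rectangle in supercritical two-dimensional site percolation, then bound the failure probability by a union bound over blocking dual paths using exponential decay of the subcritical matching-lattice clusters, choosing the constant in front of $\log N$ large enough to beat the polynomial prefactor. The paper handles the boundary alignment you flag by exploring from the entire top face of the box (rather than a single seed), which is the same fix you sketch.
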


\begin{proof}
    The renormalization scheme described in Proposition~\ref{prop:slabrenorm} explores the connected component of a single vertex, and in the same way one can explore from a starting set with multiple vertices. In particular, using such an exploration from the top of the box, the present lemma can be reduced to showing that for any $q > p_c^{\mathrm{site}}\paren{\Z^2},$ there is a $b > 0$ so that 
    \[\inf_N \mu_q\paren{\mathcal{R} \cap \set{x = 0} \xleftrightarrow[]{\mathcal{R}}{} \mathcal{R} \cap \set{x = 2N}} > 0\,,\]
    where $\mu_q$ is the measure for site percolation in $\Z^2$ and 
    $\mathcal{R} = \set{0,1,\ldots,2N}\times\set{0,\ldots,\lceil b\log N \rceil}.$
    
    In order to show this, we consider the dual site percolation on the graph $\paren{\Z^2}^\times,$ which has the same vertex set but edges between vertices at $L^\infty$ distance $1$ instead of just nearest neighbors (that is, we add diagonal edges in each square of the usual $\Z^2$ lattice). We couple the primal site percolation subgraph, which we call $X,$ with a dual site percolation $Y$ with parameter $1-q.$ Let $F_l$ be the event that there is an open dual path from $\set{0,\ldots,2N}\times \set{0}$ to $\set{0,\ldots,2N} \times \set{l}$ in $\set{1,\ldots,2N}\times \Z.$ If $\sup_N \mathbb{P}\paren{F_{b\log N}} < 1,$ by a standard duality argument we have the desired bound on the probability of a primal box crossing.

    The probability of $F_l$ is bounded above by the probability there is at least one dual vertex in $\set{1,\ldots,2N}\times \set{0}$ that is in a component of size $l.$ Since $Y$ is subcritical, there is a constant $d>0$ so that 
    \[\mu\paren{0 \xleftrightarrow[Y]{}\partial \Lambda_l} \leq \exp\paren{-d l}\,.\]
    Then by a union bound, we have 
    \[\mu\paren{F_l} \leq N\exp\paren{-d l}\,,\]
    so taking $b = \frac{2}{d}$ so that $l = \lceil\frac{2}{d}\log N\rceil$ gives the desired bound.
\end{proof}

We are now ready to prove Theorem~\ref{thm:handles}.

\begin{proof}[Proof of Theorem~\ref{thm:handles}]    
    Suppose $1-p_c\paren{\Z^3} < p < 1-p_c^{\lim}.$ In this regime, $P$ separates $D_1^+$ from $D_1^-$ with high probability so we will assume that it does.  

    We first show the upper bound. Let $F$ be the union of the components of $\Lambda_N \setminus P$ that intersect $D_1^-$ and let $P'$ be the subset of the plaquette boundary of $F$ containing all plaquettes adjacent to cubes of the connected component containing $D_1^+$ in $\Lambda_n \setminus F.$ $P'$ is a plaquette crossing of $R.$

    Denote by $E\paren{P'}$ the set of edges adjacent to plaquettes of $P'.$ Then
    $$\rank H_1\paren{P'\cup \partial R}\leq \rank H_1\paren{P'} \leq \rank C_1\paren{P'}\leq \abs{E\paren{P'}}\leq 4\abs{P'}\leq 20\abs{F}$$
    where $\abs{F}$ denotes the number of dual sites contained in $F.$ In the last two inequalities we are using the fact that each plaquette contains four edges and each cube has 6 faces, and that a minimal surface cannot contain all of the plaquettes in the boundary of a single cube. We have that
    \[\mathbb{P}_p\paren{\abs{F} \geq m} \leq \mathbb{P}_p\paren{\sum_{i=1}^{N^2} X_i \geq m}\,,\]
    where the $X_i$'s are i.i.d. copies of the random variable distributed as the size of the dual component of an arbitrary vertex in $B.$ Then since $B$ is subcritical, the law of large numbers gives the desired bound.       

    We now turn to the lower bound. Suppose that $P'$ is a plaquette crossing of $\Lambda_N$; such a crossing exists with high probability. We can write $\Lambda_N = \bigcup_{j=1}^{M} R_j \cup Z,$ where $M \coloneqq \lfloor\frac{N}{2a\log N}\rfloor \lfloor \frac{N}{k}\rfloor$ for $a,k$ as in Lemma~\ref{lemma:thincrossing} and each $R_j$ is a translate of $R_{N,c,k}$ by a vector in \[\mathrm{span}_{\Z}\set{\paren{0,c\log N,0},\paren{0,0,k}}\,.\] By Lemma~\ref{lemma:thincrossing}, there is a $\delta > 0$ so that, with high probability, there are entangled top--bottom crossings of at least $M'\coloneqq \ceil{\delta M}$ of the $R_j$'s: enumerate them as $\hat{R}_1,\ldots,\hat{R}_{M'}.$ 

    By Proposition~\ref{prop:plaquettestosurface} it suffices to show that the rank of map on homology $\phi_* : H_1\paren{S_{P'}} \to H_1\paren{W\cup \partial \Lambda_N}$ induced by inclusion is at least $M'.$ We do this by iteratively combining the pieces in smaller boxes using the Mayer-Vietoris sequence.

Let $\Lambda^1,\Lambda^2,\Lambda^3 \subset \Lambda_N$ be boxes so that $\Lambda^1$ and $\Lambda^2$ have disjoint interiors and $\Lambda^3 = \Lambda^1 \cup \Lambda^2.$  Set $W_i = W \cap \Lambda^i$ and $ S_i = S_{P'} \cap \Lambda^i.$

Consider the following commutative diagram, in which the columns are maps induced by inclusions and the rows are from the Mayer--Vietoris sequences for the decompositions $S_3 = S_1 \cup S_2$ and $W_3 \cup \partial \Lambda^1 \cup \partial \Lambda^2 = \paren{W_1 \cup \partial \Lambda^1} \cup \paren{W_2 \cup \partial \Lambda^2}$ respectively:
    
\[\begin{tikzcd}
	{} & {H_1\paren{S_1}\oplus H_1\paren{S_2}} & {H_1\paren{S_3}} \\
	&& {H_1\paren{S_3 \cup \partial \Lambda^3}} \\
	{H_1\paren{\partial \Lambda^1 \cap \partial \Lambda^2}} & {H_1(W_1 \cup \partial \Lambda^1) \oplus H_1\paren{W_2 \cup \partial \Lambda^2}} & {H_1\paren{W_3 \cup \partial \Lambda^1 \cup \partial \Lambda^2}}
	\arrow[from=1-2, to=1-3]
	\arrow["{k_1 \oplus k_2}"', from=1-2, to=3-2]
	\arrow["{i_*}", from=1-3, to=2-3]
	\arrow["{j_*}", from=2-3, to=3-3]
	\arrow["h", from=3-1, to=3-2]
	\arrow["{l_1-l_2}", from=3-2, to=3-3]
\end{tikzcd}\]
    $h$ is trivial because  $H_1\paren{\partial \Lambda^1 \cap \partial \Lambda^2} = 0.$ Thus, by exactness of the bottom row, $l_1 - l_2$ is injective. It follows that 
    \begin{equation}\label{eq:mvboxes}
        \rank i_* \geq \rank \paren{k_1 \oplus k_2} = \rank k_1 + \rank k_2\,.
    \end{equation}   
    For example, if $\Lambda^1$ and $\Lambda^2$ were two of the rectangles $\hat{R}_i$ then we would have $\rank k_1\geq 1$ and   $\rank k_2\geq 1$ by Proposition~\ref{prop:surfacehandle} so it would follow that $\rank i_*\geq 2.$ 

    We now add the $R_j$'s one at a time in each row using (\ref{eq:mvboxes}) and apply  Proposition~\ref{prop:surfacehandle} to each $\hat{R}_i$ and then combine rows adding one at a time using (\ref{eq:mvboxes}). One more similar application of Mayer--Vietoris to $\bigcup_{j=1}^M R_i$ and $Z$ gives the bound
    \[\rank \phi_* \geq M'\,,\]
    completing the proof.
\end{proof}

\bibliographystyle{alpha}
\bibliography{refs}

@article {ACCFR83,
    AUTHOR = {Aizenman, M. and Chayes, J. T. and Chayes, L. and Fr\"{o}hlich, J.
              and Russo, L.},
     TITLE = {On a sharp transition from area law to perimeter law in a
              system of random surfaces},
   JOURNAL = {Comm. Math. Phys.},
  FJOURNAL = {Communications in Mathematical Physics},
    VOLUME = {92},
      YEAR = {1983},
    NUMBER = {1},
     PAGES = {19--69},
      ISSN = {0010-3616},
   MRCLASS = {82A05 (60K35 81E25 82A42)},
  MRNUMBER = {728447},
MRREVIEWER = {J. Theodore Cox},
       URL = {http://projecteuclid.org.proxy.lib.ohio-state.edu/euclid.cmp/1103940734},
}

@article {AG91,
    AUTHOR = {Aizenman, Michael and Grimmett, Geoffrey},
     TITLE = {Strict monotonicity for critical points in percolation and
              ferromagnetic models},
   JOURNAL = {J. Statist. Phys.},
  FJOURNAL = {Journal of Statistical Physics},
    VOLUME = {63},
      YEAR = {1991},
    NUMBER = {5-6},
     PAGES = {817--835},
      ISSN = {0022-4715},
   MRCLASS = {82B43 (82B27)},
  MRNUMBER = {1116036},
MRREVIEWER = {J. Theodore Cox},
       DOI = {10.1007/BF01029985},
       URL = {https://doi-org.proxy.lib.ohio-state.edu/10.1007/BF01029985},
}

@article{balister2014essential,
  title={Essential enhancements revisited},
  author={Balister, Paul and Bollob{\'a}s, B{\'e}la and Riordan, Oliver},
  journal={arXiv preprint arXiv:1402.0834},
  year={2014}
}

@article {GM90,
    AUTHOR = {Grimmett, G. R. and Marstrand, J. M.},
     TITLE = {The supercritical phase of percolation is well behaved},
   JOURNAL = {Proc. Roy. Soc. London Ser. A},
  FJOURNAL = {Proceedings of the Royal Society. London. Series A.
              Mathematical, Physical and Engineering Sciences},
    VOLUME = {430},
      YEAR = {1990},
    NUMBER = {1879},
     PAGES = {439--457},
      ISSN = {0962-8444},
   MRCLASS = {60K35 (82B43)},
  MRNUMBER = {1068308},
MRREVIEWER = {H. Kesten},
       DOI = {10.1098/rspa.1990.0100},
       URL = {https://doi-org.proxy.lib.ohio-state.edu/10.1098/rspa.1990.0100},
}

@article{grimmett2010plaquettes,
  title={Plaquettes, spheres, and entanglement},
  author={Grimmett, Geoffrey and Holroyd, Alexander},
  journal={Electronic Journal of Probability},
  volume={15},
  pages={1415--1428},
  year={2010},
  publisher={Institute of Mathematical Statistics and Bernoulli Society}
}

@book{hatcher2002algebraic,
  title={Algebraic Topology},
  author={Hatcher, Allen},
  year={2002},
  publisher={Cambridge University Press}
}

@article{papakyriakopoulos1957dehn,
  title={On {D}ehn's lemma and the asphericity of knots},
  author={Papakyriakopoulos, Christos D},
  journal={Proceedings of the National Academy of Sciences of the United States of America},
  volume={43},
  number={1},
  pages={169},
  year={1957},
  publisher={National Academy of Sciences}
}

@article{grimmett2000entanglement,
  title={Entanglement in percolation},
  author={Grimmett, Geoffrey R and Holroyd, Alexander E},
  journal={Proceedings of the London Mathematical Society},
  volume={81},
  number={2},
  pages={485--512},
  year={2000},
  publisher={Cambridge University Press}
}

@article{duncan2020homological,
	title={Homological percolation on a torus: plaquettes and permutohedra},
	url={arxiv.org/abs/10.48550/arXiv.2011.11903},
	author={Duncan, Paul and Kahle, Matthew and Schweinhart, Benjamin},
	year={2025},
    journal={Annales de l'Institut Henri Poincar{\'e}, Probabilit{\'e}s et Statistiques},
}

@article{duminil2019sharp,
  title={Sharp phase transition for the random-cluster and Potts models via decision trees},
  author={Duminil-Copin, Hugo and Raoufi, Aran and Tassion, Vincent},
  journal={Annals of Mathematics},
  volume={189},
  number={1},
  pages={75--99},
  year={2019},
  publisher={JSTOR}
}

@article{atapour2010number,
  title={On the number of entangled clusters},
  author={Atapour, Mahshid and Madras, Neal},
  journal={Journal of Statistical Physics},
  volume={139},
  number={1},
  pages={1--26},
  year={2010},
  publisher={Springer}
}

@article{babson2011fundamental,
  title={The fundamental group of random 2-complexes},
  author={Babson, Eric and Hoffman, Christopher and Kahle, Matthew},
  journal={Journal of the American Mathematical Society},
  volume={24},
  number={1},
  pages={1--28},
  year={2011}
}

@article{linial2006homological,
  title={Homological connectivity of random 2-complexes},
  author={Linial, Nathan and Meshulam, Roy},
  journal={Combinatorica},
  volume={26},
  number={4},
  pages={475--487},
  year={2006},
  publisher={Springer}
}

@article{meshulam2009homological,
  title={Homological connectivity of random k-dimensional complexes},
  author={Meshulam, Roy and Wallach, Nathan},
  journal={Random Structures \& Algorithms},
  volume={34},
  number={3},
  pages={408--417},
  year={2009},
  publisher={Wiley Online Library}
}

@article{linial2016phase,
  title={On the phase transition in random simplicial complexes},
  author={Linial, Nathan and Peled, Yuval},
  journal={Annals of Mathematics},
  pages={745--773},
  year={2016},
  publisher={JSTOR}
}

@inproceedings{kahle2021topology,
  title={Topology of random-dimensional cubical complexes},
  author={Kahle, Matthew and Paquette, Elliot and Rold{\'a}n, {\'E}rika},
  booktitle={Forum of Mathematics, Sigma},
  volume={9},
  pages={e76},
  year={2021},
  organization={Cambridge University Press}
}

@unpublished{duncan2025enhancement, title={On the Continuity of Enhancement Percolation},
  author={Duncan, Paul and Schweinhart, Benjamin and Sivakoff, David},
  note={In preparation}
}

@article{alexander1924subdivision,
  title={On the subdivision of 3-space by a polyhedron},
  author={Alexander, James W},
  journal={Proceedings of the National Academy of Sciences},
  volume={10},
  number={1},
  pages={6--8},
  year={1924}
}

@article{alexander1924example,
  title={An example of a simply connected surface bounding a region which is not simply connected},
  author={Alexander, James W},
  journal={Proceedings of the National Academy of Sciences},
  volume={10},
  number={1},
  pages={8--10},
  year={1924}
}

@article{holroyd2002inequalities,
  title={Inequalities in entanglement percolation},
  author={Holroyd, Alexander E},
  journal={Journal of statistical physics},
  volume={109},
  pages={317--323},
  year={2002},
  publisher={Springer}
}

@inproceedings{holroyd2000existence,
  title={Existence of a phase transition for entanglement percolation},
  author={Holroyd, Alexander E},
  booktitle={Mathematical Proceedings of the Cambridge Philosophical Society},
  volume={129},
  number={2},
  pages={231--252},
  year={2000},
  organization={Cambridge University Press}
}

@article{bing1964some,
  title={Some aspects of the topology of 3-manifolds related to the {P}oincar{\'e} conjecture},
  author={Bing, RH},
  journal={Lectures on modern mathematics},
  volume={2},
  number={1},
  pages={93--128},
  year={1964},
  publisher={Wiley New York}
}

@article{kauffman1989invariants,
  title={Invariants of graphs in three-space},
  author={Kauffman, Louis H},
  journal={Transactions of the American Mathematical Society},
  volume={311},
  number={2},
  pages={697--710},
  year={1989}
}

@article{bing1957approximating,
  title={Approximating surfaces with polyhedral ones},
  author={Bing, RH},
  journal={Annals of Mathematics},
  volume={65},
  number={3},
  pages={456--483},
  year={1957},
  publisher={JSTOR}
}

@MISC {4034711,
    TITLE = {non-trivial finite fundamental group},
    AUTHOR = {Moishe Kohan (https://math.stackexchange.com/users/84907/moishe-kohan)},
    HOWPUBLISHED = {Mathematics Stack Exchange},
    NOTE = {URL:https://math.stackexchange.com/q/4034711 (version: 2021-02-21)},
    EPRINT = {https://math.stackexchange.com/q/4034711},
    URL = {https://math.stackexchange.com/q/4034711}
}

@article{armstrong1967transversality,
  title={Transversality for piecewise linear manifolds},
  author={Armstrong, Mark Anthony and Zeeman, Erik Christopher},
  journal={Topology},
  volume={6},
  number={4},
  pages={433--466},
  year={1967},
  publisher={Pergamon}
}

@inproceedings{chazal2005weak,
  title={Weak feature size and persistent homology: computing homology of solids in Rn from noisy data samples},
  author={Chazal, Fr{\'e}d{\'e}ric and Lieutier, Andr{\'e}},
  booktitle={Proceedings of the twenty-first annual symposium on Computational geometry},
  pages={255--262},
  year={2005}
}
\end{document}